\newtheorem{teorema}{Theorem}[section]
\newtheorem{proposicion}[teorema]{Proposition}
\newtheorem{notacion}[teorema]{Notation}
\newtheorem{lema}[teorema]{Lemma}
\newtheorem{corolario}[teorema]{Corollary}
\newtheorem{ejemplo}[teorema]{Example}
\newtheorem{definicion}[teorema]{Definition}
\newtheorem{nota}[teorema]{Remark}
\def\bpm{B(\P^m,2)}
\def\fpm{F(\P^m,2)}
\def\TC{\mathrm{TC}}
\def\F2{\mathbb{F}_2}
\def\Z2{\mathbb{Z}_2}
\def\sq1{\mathrm{Sq}^1}
\def\ev{\mathrm{ev}}
\def\emb{\mathrm{Emb}}
\def\Imm{\mathrm{Imm}}
\def\P{\mathrm{P}}
\def\cajita{$\framebox{\begin{picture}(.7,.7)\put(0,0){}\end{picture}}
            \hspace{.2mm}$}
\title{The integral cohomology groups of configuration spaces
of pairs of points in real projective spaces}
\author{Jes\'us Gonz\'alez\hspace{.1mm}\footnote{Partially supported 
by CONACYT Research Grant 102783}\hspace{2.7mm} and \hspace{.1mm}
Peter Landweber}
\date{\empty}
\begin{document}

\maketitle

\begin{abstract}
We compute the integral homology and cohomology groups of 
configuration spaces of two distinct points on a given 
real projective space. The explicit answer is related
to the (known multiplicative structure in
the) integral cohomology---with simple and twisted 
coefficients---of the dihedral group of order~8 
(in the case of unordered configurations) 
and the elementary abelian 2-group of rank 2 (in the case of ordered 
configurations). As an application, we complete the computation 
of the symmetric topological complexity of real projective spaces
$\P^{2^i+\delta}$ with \hspace{.4mm}$i\geq0$\hspace{.4mm} and 
\hspace{.4mm}$0\leq\delta\leq2$.
\end{abstract}

\noindent
{\small\it Key words and phrases: $2$-point configurations of real 
projective spaces; dihedral group of order~$8;$ 
twisted Poincar\'e duality and torsion linking form; symmetric 
topological complexity; Bockstein, Cartan-Leray, and 
Serre spectral sequences.}

\noindent
{\small{\it 2010 Mathematics Subject Classification:} 
Primary: 55R80, 55T10; Secondary: 55M30, 57R19, 57R40.}

\section{Introduction and description of main results}
Unless explicitly indicated otherwise, the notation 
$H^*(X)$ refers to integral cohomology groups of a space $X$ where 
a simple system of local coefficients is used. The cyclic group with $2^e$
elements is denoted by $\mathbb{Z}_{2^e}$. In the case $e=1$ we also use
the notation $\F2$ if the field structure is to be noted. It will be 
convenient to use the notation $\langle k\rangle$ for 
the elementary abelian 2-group of rank $k$, and write $\{k\}$ as a 
shorthand for $\langle k\rangle\oplus\mathbb{Z}_4$.

\medskip
We address the problem of computing the 
integral homology and cohomology groups of 
the configuration spaces $F(\P^m,2)$ and $B(\P^m,2)$ of two distinct points,
ordered and unordered respectively,
in the $m$-dimensional real projective space $\P^m$. 
Our main results are presented in Theorems~\ref{descripcionordenada}, 
\ref{descripciondesaordenada}, \ref{HFpar}, and \ref{HF1m4}.
The first two of these take the following explicit form:

\begin{teorema}\label{descripcionordenada}
For $n>0$,
$$
H^i(F(\P^{2n},2)) = \begin{cases}
\mathbb{Z}, & i=0\mbox{ ~or~ }i=4n-1;\\
\left\langle\frac{i}2+1\right\rangle, & i\mbox{ ~even,~ }1\leq i\leq2n;\\
\left\langle\frac{i-1}2\right\rangle, & i\mbox{ ~odd,~ }1\leq i\leq2n;\\
\left\langle2n+1-\frac{i}2\right\rangle, & i\mbox{ ~even,~ }
2n<i<4n-1;\rule{6mm}{0mm}\\
\left\langle2n-\frac{i+1}2\right\rangle, & i\mbox{ ~odd,~ }2n<i<4n-1;\\
0, & \mbox{otherwise}.
\end{cases}
$$
\noindent For $n\geq0$,
$$
H^i(F(\P^{2n+1},2)) = \begin{cases}
\mathbb{Z}, & i=0;\\
\left\langle\frac{i}2+1\right\rangle, & i\mbox{ ~even,~ }1\leq i\leq2n;\\
\left\langle\frac{i-1}2\right\rangle, & i\mbox{ ~odd,~ }1\leq i\leq2n;\\
\mathbb{Z}\oplus\langle n\rangle, & i=2n+1;\\
\left\langle2n+1-\frac{i}2\right\rangle, & i\mbox{ ~even,~ }2n+1<i\leq4n+1;\\
\left\langle2n+1-\frac{i-1}2\right\rangle, & i\mbox{ ~odd,~ }2n+1<i\leq4n+1;\\
0, & \mbox{otherwise}.
\end{cases}
$$
\end{teorema}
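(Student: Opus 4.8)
The plan is to exploit the fiber bundle description of the ordered configuration space. Since $\P^m = S^m/\Z2$ acts freely, the Stiefel manifold $V_{m+1,2}$ of orthonormal $2$-frames carries a free action of $\Z2\times\Z2$ (each factor antipodal on one frame vector), and the quotient is precisely $F(\P^m,2)$ up to homotopy: indeed $F(S^m,2)\simeq V_{m+1,2}$ equivariantly with respect to the $\Z2\times\Z2$-action that switches and negates, so $F(\P^m,2)=F(S^m,2)/(\Z2\times\Z2)$. Thus the Cartan--Leray spectral sequence
$$
E_2^{p,q}=H^p\!\left(\Z2\times\Z2;\,H^q(V_{m+1,2};\mathbb{Z})\right)\;\Longrightarrow\;H^{p+q}(F(\P^m,2);\mathbb{Z})
$$
is available. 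The first step is to record the (well-known) integral cohomology of $V_{m+1,2}$, which is concentrated in degrees $0,\,m-1,\,m,\,2m-1$ with a $\mathbb{Z}$ in degrees $0$ and $2m-1$, and in degrees $m-1,m$ one finds either $\mathbb{Z}$ and $0$ or $\Z2$-groups depending on the parity of $m$; one must then identify the $\Z2\times\Z2$-module structure on these groups (the switch map acts on the middle classes, and the sign is dictated by whether $m$ is even or odd, explaining the split into the two cases of the theorem).

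Next I would compute the group cohomology $H^*(\Z2\times\Z2;M)$ for each of the four coefficient modules $M$ appearing as an $H^q(V_{m+1,2})$. For trivial $\mathbb{Z}$ coefficients this is the classical answer (the bigraded pieces being sums of $\Z2$'s of ranks growing linearly, plus the $\mathbb{Z}$ in bidegree $(0,0)$), and for the $\Z2$-modules one similarly gets towers of $\Z2$'s; the nontrivial module structure on the degree $m-1$ and $m$ rows has to be fed in carefully, since this is what produces the asymmetry between $H^i$ for $i\le 2n$ and $i>2n$ and the single $\mathbb{Z}$-summand in the middle degree $2n+1$ of the odd case (coming from $H^{2n}$ of the base times the $\mathbb{Z}$ in $H^1$ of the group, surviving because of an equivariant nontriviality). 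Then I would populate the $E_2$-page, which lives in only three or four nonzero rows, and analyze differentials: with so few rows the only possibly nonzero differentials are $d_m$ (and $d_{2m}$, $d_{m+1}$), and I would argue they vanish — or compute their precise effect — by comparing with the Cartan--Leray spectral sequence for the simpler $\Z2$-actions (e.g. restricting to one $\Z2$ factor, which recovers $H^*(F(S^m,2)/\Z2)$, a space whose cohomology is either known or easier), and by a Poincaré-duality constraint: $F(\P^m,2)$ is a closed $(2m-1)$-manifold, orientable exactly when $m$ is odd, so the answer must satisfy twisted Poincaré duality, which pins down the top of the tower once the bottom is known.

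Finally I would assemble the surviving $E_\infty$-page and resolve the extension problems, checking that all extensions are trivial except possibly those involving the $\mathbb{Z}_4$ that one expects from $B(\P^m,2)$ (none here, since the statement for $F$ has only elementary abelian torsion away from the free parts) — the torsion linking form and the $\sq1$-action on mod-$2$ cohomology, or a direct comparison with the mod-$2$ Betti numbers coming from the Leray--Hirsch-type description of $H^*(F(\P^m,2);\F2)$, serve to certify that the extensions split. The main obstacle I anticipate is the middle-degree bookkeeping: controlling the exact $\Z2\times\Z2$-module structure on $H^{m-1}$ and $H^m$ of the Stiefel manifold (including the sign subtleties of the switch map) and then showing the potential $d_m$ differential out of, or into, these rows behaves as claimed — this is precisely where the two parity cases genuinely diverge and where the lone $\mathbb{Z}$ in degree $2n+1$ and the doubling/halving of the ranks of the $\langle k\rangle$'s is won or lost. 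Everything else is a (lengthy but routine) spectral-sequence computation with linearly-growing towers of $\Z2$'s.
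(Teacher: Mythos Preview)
Your setup is exactly the paper's: the CLSS of the free $\Z2\times\Z2$-action on $V_{m+1,2}$, with the $E_2$-term read off from Lemma~\ref{kunneth} once the module structure on $H^*(V_{m+1,2})$ is known (Theorem~\ref{D8actionV}). The gap is in your treatment of the differentials. You write that you ``would argue they vanish --- or compute their precise effect'' by comparison with a single $\Z2$-quotient or by Poincar\'e duality. In fact the relevant $d_m$ and $d_{m+1}$ differentials are \emph{nonzero and injective} throughout the range that matters, and the paper's mechanism for proving this is a $2$-rank count: one knows the top group $H^{2m-1}(E_m)$ from orientability (Proposition~\ref{orientabilidadB} and Remark~\ref{analogousversionsF}), one knows all mod~$2$ Betti numbers (Lemma~\ref{initialF}), and the Bockstein exact couple then pins down the $2$-rank $r_i$ of each $H^i(E_m)$ inductively from the top down. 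Comparing these forced ranks with the sizes of domain and target in the CLSS leaves no room: each differential must be injective (indeed injective after $\otimes\,\Z2$). Your proposed comparison with a one-factor $\Z2$-quotient is not used and it is not clear it would suffice; Poincar\'e duality (via the torsion linking form, Section~\ref{linkinsection}) does recover the groups in dimensions $\ge m+2$ from those in dimensions $\le m-2$, but it leaves the three middle dimensions $m-1,\,m,\,m+1$ undetermined, and those are exactly where the differential analysis is needed.

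Two smaller points. First, your orientability claim is backwards for the closed model: the $(2m-1)$-manifold $E_m=V_{m+1,2}/(\Z2\times\Z2)$ is orientable precisely when $m$ is \emph{even} (this is the $\Z2\times\Z2$-version of Proposition~\ref{orientabilidadB}; check it against the theorem itself, where $H^{4n-1}(F(\P^{2n},2))=\mathbb{Z}$ but $H^{4n+1}(F(\P^{2n+1},2))=\Z2$). The statement that $F(\P^m,2)$ is orientable for odd $m$ refers to the open $2m$-manifold, not to its closed deformation retract. Second, the lone $\mathbb{Z}$ in $H^{2n+1}(F(\P^{2n+1},2))$ does not come from ``$H^{2n}$ of the base times $H^1$ of the group''; it is the surviving $E_\infty^{0,m}=H^0(\Z2\times\Z2;H^m(V_{m+1,2}))=\mathbb{Z}$, since for odd $m$ the action on $H^m(V_{m+1,2})\cong\mathbb{Z}$ is trivial.
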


\begin{teorema}\label{descripciondesaordenada}
Let $0\leq b\leq3$. For $n>0$,
$$
H^{4a+b}(B(\P^{2n},2)) = \begin{cases}
\mathbb{Z}, & 4a+b=0 \mbox{ ~or~ } 4a+b=4n-1;\\
\{2a\}, & b=0<a,\;\,4a+b\leq2n;\\
\left\langle2a\right\rangle, & b=1,\;\,4a+b\leq2n;\\
\left\langle2a+2\right\rangle, & b=2,\;\,4a+b\leq2n;\\
\left\langle2a+1\right\rangle, & b=3,\;\,4a+b\leq2n;\\
\{2n-2a\}, & b=0,\;\,2n<4a+b<4n-1;\\
\langle2n-2a-1\rangle, & b=1,\;\,2n<4a+b<4n-1;\\
\langle2n-2a\rangle, & b=2,\;\,2n<4a+b<4n-1;\\
\langle2n-2a-2\rangle, & b=3,\;\,2n<4a+b<4n-1;\\
0, & \mbox{otherwise}.
\end{cases}
$$
\noindent For $n\geq0$,
%$$
%H^{4a+b}(B(\P^{4n+1},2)) = \begin{cases}
%\mathbb{Z}, & 4a+b=0;\\
%\{2a\}, & b=0<a,\;\,4a+b<4n+1;\\
%\left\langle2a\right\rangle, & b=1,\;\,4a+b<4n+1;\\
%\left\langle2a+2\right\rangle, & b=2,\;\,4a+b<4n+1;\\
%\left\langle2a+1\right\rangle, & b=3,\;\,4a+b<4n+1;\\
%\mathbb{Z}\oplus\langle2n\rangle, & 4a+b=4n+1;\\
%\{4n-2a\}, & b=0,\;\,4n+1<4a+b\leq8n+1;\\
%\langle4n-2a+1\rangle, & b=1,\;\,4n+1<4a+b\leq8n+1;\\
%\langle4n-2a\rangle, & b\in\{2,3\},\;\,4n+1<4a+b\leq8n+1;\\
%0, & \mbox{otherwise},
%\end{cases}
%$$
%and
%$$
%H^{4a+b}(B(\P^{4n+3},2)) = \begin{cases}
%\mathbb{Z}, & 4a+b=0;\\
%\{2a\}, & b=0<a,\;\,4a+b<4n+3;\\
%\left\langle2a\right\rangle, & b=1,\;\,4a+b<4n+3;\\
%\left\langle2a+2\right\rangle, & b=2,\;\,4a+b<4n+3;\\
%\left\langle2a+1\right\rangle, & b=3,\;\,4a+b<4n+3;\\
%\mathbb{Z}\oplus\langle2n+1\rangle, & 4a+b=4n+3;\\
%\{4n-2a+2\}, & b=0,\;\,4n+3<4a+b\leq8n+5;\\
%\langle4n-2a+3\rangle, & b=1,\;\,4n+3<4a+b\leq8n+5;\\
%\langle4n-2a+2\rangle, & b\in\{2,3\},\;\,4n+3<4a+b\leq8n+5;\\
%0, & \mbox{otherwise}.
%\end{cases}
%$$
$$
H^{4a+b}(B(\P^{2n+1},2)) = \begin{cases}
\mathbb{Z}, & 4a+b=0;\\
\{2a\}, & b=0<a,\;\,4a+b<2n+1;\\
\left\langle2a\right\rangle, & b=1,\;\,4a+b<2n+1;\\
\left\langle2a+2\right\rangle, & b=2,\;\,4a+b<2n+1;\\
\left\langle2a+1\right\rangle, & b=3,\;\,4a+b<2n+1;\\
\mathbb{Z}\oplus\langle n\rangle, & 4a+b=2n+1;\\
\{2n-2a\}, & b=0,\;\,2n+1<4a+b\leq4n+1;\\
\langle2n+1-2a\rangle, & b=1,\;\,2n+1<4a+b\leq4n+1;\\
\langle2n-2a\rangle, & b\in\{2,3\},\;\,2n+1<4a+b\leq4n+1;\\
0, & \mbox{otherwise}.
\end{cases}
$$
\end{teorema}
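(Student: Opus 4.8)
\bigskip
\noindent\emph{Sketch of the intended argument.}
The plan is to identify $B(\P^m,2)$ with a free orbit space of a Stiefel manifold, run the associated Cartan--Leray spectral sequence, and then use the ordered case (Theorem~\ref{descripcionordenada}) together with twisted Poincar\'e duality to settle the differentials and the leftover extensions. One starts from the geometric reduction: opening the angle between two distinct lines of $\P^m=\P(\mathbb{R}^{m+1})$ symmetrically about their pair of bisectors in the plane they span --- a construction symmetric in the two lines --- until the lines become orthogonal gives an $O(m+1)$-equivariant deformation retraction of $F(\P^m,2)$ onto the space $V_{m+1,2}/(\Z2\times\Z2)$ of ordered pairs of mutually orthogonal lines, where $\Z2\times\Z2\subset O(2)$ changes the signs of the two frame vectors independently; passing to unordered pairs yields $B(\P^m,2)\simeq V_{m+1,2}/D_8$, with $D_8\subset O(2)$ the symmetry group of the square, namely the above $\Z2\times\Z2$ enlarged by the transposition of the two vectors (for $m=\infty$ this says $B(\P^\infty,2)\simeq BD_8$, so every finite case is a truncation of $H^*(BD_8)=H^*(D_8;\mathbb{Z})$). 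Since this $D_8$-action is free and $V_{m+1,2}$ is the unit tangent bundle of $S^m$, one has a spectral sequence
$$
E_2^{p,q}=H^p\!\big(D_8;H^q(V_{m+1,2})\big)\ \Longrightarrow\ H^{p+q}(B(\P^m,2)),
$$
whose input has only the rows $q\in\{0,2n,4n-1\}$, with values $\mathbb{Z},\Z2,\mathbb{Z}$, when $m=2n$, and only the rows $q\in\{0,2n,2n+1,4n+1\}$, with value $\mathbb{Z}$, when $m=2n+1$.

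Next one identifies the $E_2$-page. The bottom row is $H^*(D_8;\mathbb{Z})$ (trivial coefficients); the middle row for $m$ even (where $H^m(V_{m+1,2})=\Z2$) is $H^*(D_8;\F2)=\F2[x,y,w]/(xy)$; and the remaining $\mathbb{Z}$-rows carry sign (orientation) characters of $D_8$ that one reads off from the action of the square's symmetries on frames, after which one substitutes the standard integral and twisted cohomology of $D_8$. Already at this stage $H^i(B(\P^m,2))\cong H^i(D_8;\mathbb{Z})$ for $i<m$, since in that range the only nonzero row in total degree $i$ is $q=0$ and no differential can reach it; this produces the ``stable'' low-degree half of both formulas, and in particular the entries $\{k\}=\langle k\rangle\oplus\mathbb{Z}_4$, whose $\mathbb{Z}_4$ is exactly the $\mathbb{Z}_4$-summand of $H^{4\ast}(D_8;\mathbb{Z})$.

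The third step is the differentials off the higher rows. Because those rows are so sparse, only a few differentials can be nonzero --- essentially a single $d_2$ between the two middle rows when $m$ is odd, plus differentials of lengths $m$, $m+1$ and $2m$ off the top and middle rows --- and one fixes them from three inputs: naturality of the spectral sequence under the double cover $V_{m+1,2}/(\Z2\times\Z2)\to V_{m+1,2}/D_8$, which forces agreement with Theorem~\ref{descripcionordenada}; reduction mod $2$, where the same spectral sequence with $\F2$-coefficients is straightforward to evaluate and constrains the surviving classes; and the fact that $B(\P^m,2)\simeq V_{m+1,2}/D_8$ is a closed $(2m-1)$-manifold, so the abutment must vanish above degree $2m-1$, forcing certain top-row classes to be hit. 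Multiplicativity then propagates these computations across the whole page and yields the associated graded of $H^*(B(\P^m,2))$.

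Finally one resolves the extension problems --- the decisive one being whether a given $2$-torsion group is elementary abelian or contains a $\mathbb{Z}_4$ --- using twisted Poincar\'e duality on the closed manifold $V_{m+1,2}/D_8$ (orientable for $m$ even, non-orientable for $m$ odd) and the torsion linking form, which pairs the torsion of $H^i$ and of $H^{2m-i}$ nonsingularly: this both transports the clean low-degree description into the complementary high-degree range, producing the second regime of each formula, and forces a $\mathbb{Z}_4$ to occur in one degree exactly when it occurs in the Poincar\'e-dual degree. I expect this last step --- telling $\mathbb{Z}_4$ apart from $\langle2\rangle=\Z2\oplus\Z2$ --- to be the main obstacle, just as it was the technical core of the Bockstein computation in \cite{D8}.
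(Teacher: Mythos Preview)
Your outline is essentially the paper's own strategy, and it goes through cleanly for $m$ even and for $m\equiv 1\bmod 4$: the CLSS for the free $D_8$-action on $V_{m+1,2}$, rank constraints from the mod~$2$ Bockstein sequence, and the torsion linking form on the closed $(2m-1)$-manifold $V_{m+1,2}/D_8$ are exactly what the paper uses. The genuine gap is the congruence $m\equiv 3\bmod 4$. There your three inputs---naturality under the double cover, the mod~$2$ spectral sequence, and vanishing above degree $2m-1$---together with the $H^*(BD_8)$-module structure do \emph{not} determine the $d_2$-differential from the $(q=m)$-row to the $(q=m-1)$-row. Section~\ref{antesultima} carries this out in full for $m=3$ and exhibits two distinct patterns of differentials, both compatible with all of these constraints (and indeed the restriction to the $\Z2\times\Z2$-spectral sequence sends the two candidate $d_2$'s to the same map, since the image $2\alpha_2\in H^2(D_8;\mathbb{Z}_\alpha)\cong\mathbb{Z}_4$ dies under restriction to $H^2(\Z2\times\Z2;\mathbb{Z}_\alpha)\cong\Z2$). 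Nor does the linking form rescue you: it pairs $TH^{m+1}$ with $TH^{m-1}(\,\cdot\,;\mathbb{Z}_\alpha)$, but $m-1$ is precisely the edge of the stable range---the classifying map $p$ is only $(m-1)$-connected---so $p^*$ in twisted degree $m-1$ is a priori only injective, and you cannot read $TH^{m-1}(\bpm;\mathbb{Z}_\alpha)$ off from $H^{m-1}(BD_8;\mathbb{Z}_\alpha)$ (this is exactly the difficulty recorded in~(\ref{excepcional})).

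The paper supplies the missing ingredient you have not listed: a direct $\sq1$-computation on an explicit $\F2$-basis of $H^*(\bpm;\F2)$ in degrees $4a+3,\,4a+4,\,4a+5$ (Proposition~\ref{sq1}), using Handel's presentation of that ring, which feeds into the second page of the Bockstein spectral sequence and shows that $2\cdot H^{m+1}(\bpm)$ has $2$-rank exactly~$1$. Combined with the absence of $8$-torsion, this is what distinguishes $\{2a\}$ from $\langle 2a+1\rangle$ in degree $m+1=4a+4$. Without an argument of this kind your proof will stall at exactly the obstacle you anticipated in your last sentence, and only in this one congruence class.
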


Theorems~\ref{descripcionordenada} and~\ref{descripciondesaordenada}
can be coupled with the Universal Coefficient Theorem (UCT), 
expressing homology in 
terms of cohomology (e.g.~\cite[Theorem~56.1]{munkres}), in order to give 
explicit descriptions of the corresponding integral homology groups.
Another immediate consequence is 
that, together with Poincar\'e duality (in its not necessarily orientable 
version, cf.~\cite[Theorem~3H.6]{hatcher} or~\cite[Theorem~4.51]{ranicki}),
Theorems~\ref{descripcionordenada} and~\ref{descripciondesaordenada} 
give a corresponding explicit description 
of the $w_1$-twisted homology and cohomology groups of $\fpm$ and $\bpm$.
Details are given in Section~\ref{linkinsection}. 

\medskip
It is to be observed that Theorem~\ref{descripciondesaordenada}
fully extends cohomological calculations for $\bpm$
in~\cite{bausum} (using a different 
approach). Rather elaborate Bockstein spectral 
sequence considerations in that paper led Bausum to a description 
of a few of the cohomology 
groups in Theorem~\ref{descripciondesaordenada}---groups that are
close to the top 
cohomological dimension $2m-1$. In turn, this leads to a description of sets 
$\emb_e(\P^m)$ of isotopy classes of smooth embeddings of $\P^m$ in 
$\mathbb{R}^{2m-e}$ for low values of $e$ (as low as $e\leq2$). Similar results
were obtained by Larmore and Rigdon (note the implicit hypothesis
$m>3$ in~\cite[Section~4]{LR})\footnote{We thank Sadok Kallel for pointing out
the results in~\cite{bausum} and~\cite{LR}.}. 
More recently, Section~3 in~\cite{taylor} explains how results like 
Theorems~\ref{descripcionordenada} and~\ref{descripciondesaordenada} could 
potentially lead to new embedding-type information about projective 
spaces (Theorem~\ref{STC} and Remark~\ref{optimalidad} below are based on
such a viewpoint).

\medskip
Theorem~\ref{descripcionordenada} implies 
that the torsion in $H^*(\fpm)$ is annihilated by 2. This observation
and a standard argument using the transfer of the double cover $\fpm\to\bpm$ 
show\footnote{We thank Fred Cohen for pointing out his argument.} that
the torsion subgroup of $H^*(\bpm)$ is annihilated by 4. 
Theorem~\ref{descripciondesaordenada} then shows this is a sharp bound, as 
$H^*(\bpm)$ has 4-torsion elements in dimensions $4\ell$ for $0<\ell<m/2$.
Thus, Theorem~\ref{descripciondesaordenada} proves a 
recent conjecture of Fred Cohen claiming that $H^*(B(\P^m,2))$ has 
2-torsion (for $m>1$) and 4-torsion (for $m>2$), but no 8-torsion. 

\begin{nota}\label{m1}{\em
Note that, after inverting 2, both $\bpm$ and $\fpm$ are homology 
spheres. This assertion can be considered as a partial 
generalization of the fact that both $F(\P^1,2)$ and $B(\P^1,2)$ have
the homotopy type of a circle; for $B(\P^1,2)$ this follows from  
Lemma~\ref{inicio2Handel} and Example~\ref{V22} below, while the situation for
$F(\P^1,2)$ comes from the fact that $\P^1$ is a Lie group---so that 
$F(\P^1,2)$ is in fact diffeomorphic to $S^1\times(S^1-\{1\})$. In particular,
any product of positive dimensional classes
in either $H^*(F(\P^1,2))$ or $H^*(B(\P^1,2))$ is trivial. The 
trivial-product property also
holds for both $H^*(F(\P^2,2))$ and $H^*(B(\P^2,2))$ in view of the $\P^2$-case
in Theorems~\ref{descripcionordenada} and~\ref{descripciondesaordenada}. But 
for $m\geq3$ both $\fpm$ and $\bpm$ should have useful integral cohomology
rings (\cite{taylor} contains partial information on the case $m=3$, 
as well as an application along the lines of Theorem 1.4 below), and this 
motivates the considerations in the rest of this introductory section.
}\end{nota}

The results in this paper go a bit 
further than a plain computation of cohomology
groups. Our ultimate motivation comes from the possibility of deducing new
information on the Euclidean embedding dimension of projective spaces 
based on a good hold of the cohomology rings of the relevant 
configuration spaces. Explicitly, let
\begin{equation}\label{classify}
B(\P^m,2)\stackrel{u}\longrightarrow\P^\infty
\end{equation}
classify the obvious double cover $F(\P^m,2)\to B(\P^m,2)$. Then, 
with the seven possible exceptions\footnote{Remark~\ref{optimalidad} 
below observes that we can now rule out the first of these 
potential exceptions.} of $m$ explicitly described 
in~\cite[Equation~(8)]{taylor}, 
$\emb(\P^m)$---the dimension of the smallest Euclidean space in which $\P^m$ 
can be smoothly embedded---is characterized as the
smallest integer $n=n(m)$ such that the map in~(\ref{classify}) 
can be homotopy compressed into $\P^{n-1}$. Furthermore, 
the main result in~\cite{symmotion} asserts that,
without restriction on $m$, the number $n(m)$ agrees with 
Farber's symmetric topological complexity\footnote{As indicated 
in Definition~\ref{reducida} at the end of the paper, here we use the 
reduced version of Farber's $\TC^S$, i.e.~we choose to 
normalize the Schwarz genus of a product fibration $F\times B\to B$ 
to be $0$---not $1$.} of $\P^m$, $\TC^S(\P^m)$, 
an invariant based on the motion planning problem in robotics.
From such a viewpoint, a proper understanding of the 
multiplicative height of $u^*(z)$, where $z$ is the generator in 
$H^2(\P^\infty)$, gives lower bounds on the values that $n$ can 
take---potentially leading to new information on the 
embedding problem of real projective spaces. 
The idea actually goes back at least as far as~\cite{handel68}, where mod 2 
coefficients (and obstruction theory) are used. But the 
$\mathbb{Z}_4$ groups appearing in Theorem~\ref{descripciondesaordenada} seem
to carry finer information not yet explored\footnote{Compare with the 
situation in~\cite{electron} where the topological Borsuk problem for 
$\mathbb{R}^3$ is studied via Fadell-Husseini index theory.}. 
For instance, the strategy using integral coefficients has recently been 
exploited in~\cite{taylor} in order 
to compute $\TC^S(\mathrm{SO}(3))$---identifying it as the
unique obstruction in Goodwillie's embedding Taylor 
tower for $\P^3$. 

\medskip
As an application of the cohomological results 
in this paper, our next result completes the computation started 
in~\cite{symmotion} of the symmetric topological
complexity of projective spaces of the form $\P^{2^i+\delta}$ in the range
$i\geq0$ and $0\leq\delta\leq2$.

\begin{teorema}\label{STC}
$\TC^S(\P^5)=\TC^S(\P^6)=9$.
\end{teorema}

Section~\ref{STCP67} starts with a discussion exhibiting
the case of $\P^6$ as giving the unique exceptional numerical value for 
$\TC^S(\P^{2^i+\delta})$ in the range $i\geq0$ and $0\leq\delta\leq2$.

\begin{nota}\label{optimalidad}{\em
Since $\emb(\P^5)=9$ (\cite{hopf,mahowald}), the list 
in~\cite{taylor} of seven exceptional values of $m$ for which the equality 
$\emb(\P^m)=\TC^S(\P^m)$ {\it could\hspace{.5mm}} 
fail reduces now to $\{6,7,11,12,14,15\}$. 
Note that $6$ is the smallest $m$ for which $\emb(\P^m)$ is 
unknown: $\emb(\P^6)\in\{9,10,11\}$ is the best assertion known to date 
(\cite{ES, mahowald}). On the other hand, Theorem~\ref{STC} obviously implies 
$\TC^S(\P^7)\geq9$, improving by 1 the previously known best lower 
bound for $\TC^S(\P^7)$ noted in~\cite[Table~1]{taylor}. In fact, 
taking into account Rees' PL embedding $\P^7\subset
\mathbb{R}^{10}$ constructed in~\cite{rees}, the above considerations imply
that both $\TC^S(\P^7)$ and $\emb_{\mathrm{PL}}(\P^7)$ lie in $\{9,10\}$,
which contrasts with the best known assertion about the 
embedding dimension of $\P^7$, namely $\emb(\P^7)\in\{9,10,11,12\}$
(\cite{Ha,M64}). Despite the fact that the equality $\Imm(\P^m)=\TC(\P^m)$
actually has three exceptions (related to the Hopf invariant one problem),
the above observations lead us to think that the equality 
$\emb(\P^m)=\TC^S(\P^m)$ should actually hold for every $m$, 
at least if $\emb$ is interpreted as {\it topological} embedding 
dimension. From such a perspective, it would be highly desirable
to know whether $\P^6$ topologically embeds in $\mathbb{R}^9$. 
On the other hand, it does not seem likely that $\P^7$ could possibly embed in 
$\mathbb{R}^9$ (even topologically), and the techniques proving 
Theorem~\ref{STC} (using perhaps a cohomology theory better suited than 
singular cohomology) might allow us to formalize our intuition---we hope 
to come back to such a point elsewhere.
}\end{nota}

A profitable approach to the kind of applications in the
previous paragraphs comes from using Handel's observation 
that~(\ref{classify}) factors through the classifying
space of the dihedral group $D_8$. Namely,~(\ref{classify}) is homotopic to
the composite
\begin{equation}\label{classifydescompuesta}
B(\P^m,2)\stackrel{p}\longrightarrow BD_8
\stackrel{q}\longrightarrow\P^\infty
\end{equation}
where $p$ is specified in 
Notation~\ref{convenience} below, and $q$ is specified 
in Remark~\ref{efectom} at the end of Section~\ref{HBprelim}.
Now, not only are $H^*(\P^\infty)$ and $H^*(BD_8)$ well-known
rings, but the induced ring map 
$q^*$ is well understood (Remark~\ref{efectom}). But most importantly,
the induced map $p^*$ surjects onto the torsion subgroups of
$H^*(\bpm)$ except, perhaps, for $m\equiv3\bmod4$ 
(Theorems~\ref{HFpar} and~\ref{HF1m4} below). 
So, an eventual study of the multiplicative height of 
$u^*(z)$, and of the ring $H^*(\bpm)$ for that matter, can
be reduced to having a good hold on the kernel of $p^*$,
i.e., Fadell-Husseini's ideal-valued $\mathbb{Z}$-index 
of the $D_8$-restricted action of O$(2)$ on $V_{m+1,2}$---see 
Definition~\ref{inicio1Handel}, Lemma~\ref{inicio2Handel}, and
the considerations around~(\ref{FHindex}).
For the remainder of this 
section our attention focuses on the likely surjectivity
property of $p^*$ and, with this in mind, the following considerations (see for 
instance~\cite[\S2]{handel68}) are our main tool:

\begin{definicion}\label{inicio1Handel}{\em
Recall that $D_8$ can be expressed as the usual wreath 
product extension 
\begin{equation}\label{wreath}
1\to\Z2\times\Z2\to D_8\to\Z2\to1.
\end{equation}
Let $\rho_1,\rho_2\in D_8$ generate the normal subgroup 
$\Z2\times\Z2$, and let (the class of)
$\rho\in D_8$ generate the quotient group $\Z2$ so that, via conjugation,
$\rho$ switches $\rho_1$ and $\rho_2$. $D_8$ acts 
freely on the Stiefel
manifold $V_{n,2}$ of orthonormal $2$-frames in $\mathbb{R}^n$
by setting $$\rho(v_1,v_2)=(v_2,v_1),\quad
\rho_1(v_1,v_2)=(-v_1,v_2), \quad\mbox{and}\quad\rho_2(v_1,v_2)=(v_1,-v_2).$$
This describes a group inclusion $D_8\hookrightarrow\mathrm{O}(2)$
where the rotation 
$\rho\rho_1$ is a generator for $\mathbb{Z}_4=D_8\cap\mathrm{SO}(2)$.
}\end{definicion}

\begin{notacion}\label{convenience}{\em
Throughout the paper the letter $G$ stands for either 
$D_8$ or its subgroup $\Z2\times\Z2$ in~(\ref{wreath}). Likewise,
$E_m=E_{m,G}$ denotes the orbit space 
of the $G$-action on $V_{m+1,2}$ indicated in Definition~\ref{inicio1Handel},
and $\theta\colon V_{m+1,2}\to E_{m,G}$ represents the canonical projection.
As explained in the paragraph containing~(\ref{classifydescompuesta}), 
our interest lies in the (kernel of the) morphism induced in 
cohomology by the map
\begin{equation}\label{lap}
p=p_{m,G}\colon E_m\to BG 
\end{equation}
that classifies the $G$-action on $V_{m+1,2}$.
}\end{notacion}

\begin{lema}[{\cite[Proposition~2.6]{handel68}}]\label{inicio2Handel}
$E_m$ is a strong deformation retract of $B(\P^m,2)$ 
if $G=D_8$, and of $F(\P^m,2)$ if $G=\Z2\times\Z2$.\hfill\cajita
\end{lema}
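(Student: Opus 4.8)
The plan is to identify $E_m$ with the subspace of \emph{orthogonal} configurations inside the relevant configuration space, and then to deformation‑retract the whole space onto that subspace. A point of $F(\P^m,2)$ is an ordered pair $(L_1,L_2)$ of distinct lines through the origin in $\mathbb{R}^{m+1}$, and sending an orthonormal $2$‑frame $(v_1,v_2)\in V_{m+1,2}$ to $(\langle v_1\rangle,\langle v_2\rangle)$ defines a continuous surjection onto the subspace
$$F^\perp:=\{(L_1,L_2)\in F(\P^m,2)\mid L_1\perp L_2\},$$
whose point‑inverses are exactly the orbits of the sign‑change action of $\Z2\times\Z2$ on $V_{m+1,2}$. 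Since $V_{m+1,2}$ is compact and $F(\P^m,2)$ is Hausdorff, the induced continuous bijection $E_{m,\Z2\times\Z2}\to F^\perp$ is a homeomorphism; passing to unordered pairs and to the full $D_8$‑action gives in the same way a homeomorphism $E_{m,D_8}\to B^\perp$, where $B^\perp\subset B(\P^m,2)$ is the subspace of orthogonal unordered pairs. It therefore suffices to construct a strong deformation retraction of $F(\P^m,2)$ onto $F^\perp$, and to do so \emph{equivariantly} for the coordinate swap, so that it also descends to a strong deformation retraction of $B(\P^m,2)$ onto $B^\perp$.

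For the retraction, given distinct lines $L_1,L_2$ choose unit vectors $u_i\in L_i$ with $c:=\langle u_1,u_2\rangle\ge0$, put $\theta=\arccos c\in(0,\tfrac\pi2]$, and let $e_1=u_1$, $e_2=(u_2-cu_1)/\sqrt{1-c^2}$ be the orthonormal basis of the plane $L_1+L_2$ obtained by Gram--Schmidt. Rotating the two lines symmetrically away from one another inside this plane, so that the angle between them grows linearly from $\theta$ to $\tfrac\pi2$ (and in particular stays positive), is given explicitly by $\phi(t)=t(\tfrac\pi4-\tfrac\theta2)$ and
$$H\bigl((L_1,L_2),t\bigr)=\Bigl(\,\bigl\langle\cos\phi(t)\,e_1-\sin\phi(t)\,e_2\bigr\rangle,\ \bigl\langle\cos(\theta+\phi(t))\,e_1+\sin(\theta+\phi(t))\,e_2\bigr\rangle\,\Bigr).$$
This equals $(L_1,L_2)$ at $t=0$, lands in $F^\perp$ at $t=1$ (the two spanning vectors are then orthogonal, by the addition formula), and is stationary whenever $L_1\perp L_2$ already, since then $\phi\equiv0$. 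Replacing $(u_1,u_2)$ by $(-u_1,-u_2)$ replaces $(e_1,e_2)$ by $(-e_1,-e_2)$ and leaves $H$ unchanged, so $H$ is well defined, and a short computation re‑expressing the Gram--Schmidt basis attached to $(L_2,L_1)$ in terms of the one attached to $(L_1,L_2)$ shows that $H$ commutes with the swap, hence descends to $B(\P^m,2)$.

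I expect the only genuine obstacle to be continuity of $H$ along the locus $\theta=\tfrac\pi2$. There the ``bisector'' of $\{L_1,L_2\}$ about which the symmetric rotation is organized is truly ambiguous — there are two perpendicular candidates — so one cannot set up the retraction by first selecting a bisector; the naive device of fixing $L_1$ and rotating only $L_2$ avoids this but fails to be swap‑equivariant. The resolution is to keep the formula expressed directly in terms of $c$, $e_1$, $e_2$, which vary continuously near $\theta=\tfrac\pi2$ because $c\to0$ keeps $\sqrt{1-c^2}$ bounded away from $0$, together with the fact that $\phi(t)\to0$ there, so $H$ extends continuously (and trivially) over $F^\perp$. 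Once this is checked, along with the sign‑independence and the swap‑equivariance identities, the remainder is formal: the compactness argument yields the two homeomorphisms, and $H$ (resp.\ its descent to $B(\P^m,2)$) is the asserted strong deformation retraction. This reproduces the argument of Handel \cite[Proposition~2.6]{handel68}.
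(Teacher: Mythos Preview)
The paper does not supply its own proof of this lemma; it is quoted from Handel's paper and closed with a QED box. Your reconstruction is correct and is the natural one: identify $E_m$ with the subspace of orthogonal pairs via the map $(v_1,v_2)\mapsto(\langle v_1\rangle,\langle v_2\rangle)$, and retract by symmetrically opening the angle between the two lines inside their common $2$-plane. The swap-equivariance check you outline does go through (the Gram--Schmidt frame attached to $(L_2,L_1)$ is the reflection of the one attached to $(L_1,L_2)$ across the bisector, and your formula is invariant under that reflection once the components are swapped).

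One simplification worth noting: your explicit formula for $H$ is in fact invariant under \emph{each} sign change $u_i\mapsto -u_i$ separately, not only under the simultaneous change $(u_1,u_2)\mapsto(-u_1,-u_2)$. (Under $u_2\mapsto -u_2$ one has $\theta\mapsto\pi-\theta$, $e_2\mapsto -e_2$, $\phi\mapsto -\phi$, and both output lines are unchanged; similarly for $u_1\mapsto -u_1$.) This means the constraint $c\ge 0$ can be dropped entirely: any local continuous choice of unit vectors for $L_1$ and $L_2$ gives a continuous local expression for $H$, and these patch. The separate discussion of continuity across the locus $\theta=\pi/2$ then becomes unnecessary.
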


Thus, the cohomology properties of the configuration spaces we are 
interested in---and of~(\ref{lap}), for that matter---can be approached
via the Cartan-Leray spectral sequence (CLSS) of the $G$-action on $V_{m+1,2}$.
Such an analysis yields:

\begin{teorema}\label{HFpar}
Let $m$ be even. The map $p^*\colon H^i(BG)\to H^i(E_m)$ is:
{\em\begin{enumerate}
\item {\em an isomorphism for $i\leq m;$}
\item {\em an epimorphism with nonzero kernel for $m<i<2m-1;$}
\item {\em the zero map for $2m-1\leq i$.}
\end{enumerate}}
\end{teorema}

\begin{teorema}\label{HF1m4}
Let $m$ be odd. The map $p^*\colon H^i(BG)\to H^i(E_m)$ is:
{\em \begin{enumerate}
\item {\em an isomorphism for $i<m;$}
\item {\em a monomorphism onto the 
torsion subgroup of $H^i(E_m)$ for $i=m;$}
%\item {\em is an epimorphism for $m<i\leq 2m-1;$}
\item {\em the zero map for $2m-1<i$.}
\end{enumerate}}
\noindent Further, $p^*$ is an epimorphism with nonzero kernel 
for $m<i\leq 2m-1$ except perhaps when $G=D_8$ and $\,m\equiv3\bmod4$.
\end{teorema}

Since the ring $H^*(BG)$ is well known 
(see Theorem~\ref{modulestructure} and the comments following 
Lemma~\ref{kunneth}), the multiplicative 
structure of $H^*(E_m)$ through dimensions at most $m$ 
follows from the four theorems stated in this section. Furthermore, 
much of the ring structure in larger dimensions 
now depends on giving explicit generators for the ideal Ker$(p^*)$. 
In this direction we prove the following result (noticed independently 
by Fred Cohen using different methods):

\begin{proposicion}\label{mono4}
Let $G=D_8$. Assume $m\not\equiv3\bmod4$ and consider the 
map in~{\em(\ref{lap})}. In dimensions at most $2m-1$, every
nonzero element in $\mathrm{Ker}(p^*)$ has order $2$, 
i.e.~$2\cdot\mathrm{Ker}(p^*)=0$ in those dimensions. 
In fact, every $4\ell$-dimensional integral cohomology class in $BD_8$ 
generating a $\mathbb{Z}_4$-group maps under 
$p^*$ into a class which also generates a $\mathbb{Z}_4$-group provided 
$\ell<m/2$---otherwise the class maps trivially for dimensional reasons.
\end{proposicion}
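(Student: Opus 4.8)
The plan is to run the Cartan--Leray spectral sequence (CLSS) of the free $D_8$-action on $V_{m+1,2}$ with $\mathbb{Z}$-coefficients, whose $E_2$-page is $H^*(BD_8;\mathcal{H}^*(V_{m+1,2}))$, and to track what happens to the $4\ell$-dimensional integral classes of $BD_8$. Recall that $V_{m+1,2}$ is $(m-2)$-connected, so in the relevant total-degree range $\le 2m-1$ only the rows $q=0$ and $q=m-1$ of the spectral sequence contribute, and the only possibly nonzero differential affecting the bottom row is $d_m\colon E_m^{s,0}\to E_m^{s+m,1-m+m}=E_m^{s+m,1-m}$; more precisely the transgression-type differential $d_m\colon H^{s}(BD_8;\mathbb{Z})\to H^{s+m}(BD_8;\mathcal{H}^{m-1}(V_{m+1,2}))$ (with the appropriate twisting of coefficients by the action of $D_8$ on $H^{m-1}(V_{m+1,2})\cong\mathbb{Z}$ when $m$ is even, or on $H^{m-1}(V_{m+1,2})\cong\mathbb{Z}_2$ when $m$ is odd). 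Since $p^*\colon H^*(BD_8)\to H^*(E_m)$ is exactly the edge homomorphism onto the bottom-row survivors, $\mathrm{Ker}(p^*)$ in degrees $\le 2m-1$ is precisely the image of this single differential $d_m$, by Propositions~\ref{HFpar} and~\ref{HF1m4} (which tell us there is nothing else to worry about in this range, and in particular that for $m\not\equiv 3\bmod 4$ we really do have full control of $d_m$).

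Next I would use the explicit description of $H^*(BD_8;\mathbb{Z})$: all torsion is $2$-torsion except for a family of $\mathbb{Z}_4$-summands concentrated in degrees divisible by $4$, generated by (integral Bocksteins of) the relevant products of the degree-$2$ classes and the polynomial degree-$4$ class. The target of $d_m$ in total degree $\le 2m-1$ is a subquotient of $H^{*}(BD_8;\mathbb{Z})$ (or of $H^*(BD_8;\mathbb{Z}_2)$ in the odd case), which by the same structure theorem is a $2$-torsion group in every degree that is \emph{not} divisible by $4$ — and when $m$ is odd the coefficients are $\mathbb{Z}_2$ outright. The first half of the statement, "$2\cdot\mathrm{Ker}(p^*)=0$ in dimensions $\le 2m-1$", then follows by a parity/degree bookkeeping: I must show that the image of $d_m$ never meets a $\mathbb{Z}_4$-summand of the target in a way that contributes an element of order $4$. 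When $m$ is odd this is automatic. When $m$ is even, $d_m$ lands in $H^{s+m}(BD_8;\mathbb{Z}_w)$ with $s+m$ of the same parity as $s$... the point being that the $\mathbb{Z}_4$-classes of $BD_8$ sit in degrees $\equiv 0\bmod 4$, and I claim $d_m$ either originates from or maps to the $2$-torsion part; this is where the hypothesis $m\not\equiv 3\bmod 4$ and the explicit form of the generators of $\mathrm{Ker}(p^*)$ from~\cite{D8} get used.

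For the second assertion I would argue contrapositively on the surviving $\mathbb{Z}_4$-classes. A $4\ell$-dimensional integral class $z\in H^{4\ell}(BD_8)$ generating a $\mathbb{Z}_4$ survives to $E_\infty$ in the bottom row iff it is not hit by $d_m$ and does not support $d_m$; since $4\ell<\dim V_{m+1,2}=2m-1$ there is no room below for a differential \emph{into} degree $4\ell$ from a higher row, and a differential \emph{out of} $z$ would land in $H^{4\ell+m}(BD_8;\mathcal H^{m-1})$, which for $\ell<m/2$ is in total degree $<2m-1$ where, again by Propositions~\ref{HFpar}--\ref{HF1m4}, $d_m$ on the $\mathbb{Z}_4$-generators is forced to vanish (this is exactly the information those propositions give us in the $m\not\equiv 3\bmod 4$ case). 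Hence the full cyclic group $\mathbb{Z}_4\langle z\rangle$ maps isomorphically onto a direct summand of $H^{4\ell}(E_m)$, so $p^*(z)$ again generates a $\mathbb{Z}_4$; for $\ell\ge m/2$ we have $4\ell\ge 2m>2m-1$ when $m$ is even (resp. $4\ell\ge 2m-1$ forces triviality when $m$ is odd via Proposition~\ref{HF1m4}(4)), so $p^*(z)=0$ by the vanishing range. The main obstacle is the book-keeping in the first half: one must be sure that \emph{no} element of order $4$ can be created in $\mathrm{Ker}(p^*)$ as an extension — i.e. that a $2$-torsion class in the image of $d_m$ does not combine with the surviving $\mathbb{Z}_2$ quotient of an ambient $\mathbb{Z}_4$ to produce order $4$ — and this is precisely the point where we must lean on the explicit generators of $\mathrm{Ker}(p^*)$ recorded in~\cite{D8} rather than on the spectral sequence alone, which is also why the case $m\equiv 3\bmod 4$ (where item~3 of Proposition~\ref{HF1m4} is not established here) must be excluded.
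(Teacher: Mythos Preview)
Your proposal has several genuine gaps that prevent it from going through.

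\textbf{Row structure and differentials are misidentified.} For even $m$ (so $n=m+1$ odd) the nonzero rows of the CLSS are $q=0,m,2m-1$, with $H^*(BD_8;\F2)$ sitting at $q=m$ (because $H^m(V_{m+1,2})=\Z2$); the relevant differential into the bottom row is $d_{m+1}$, not $d_m$. For $m\equiv1\bmod4$ (so $n=m+1$ even) there are \emph{three} relevant rows $q=0,m-1,m$ in the range under consideration, and one must first control a family of $d_2$-differentials between rows $m$ and $m-1$ (this is the paper's Lemma~\ref{d2m1}) before one can even look at the long differentials $d_m$ and $d_{m+1}$ into the bottom row. Your sketch collapses all of this to a single row $q=m-1$ and a single $d_m$, which is not the actual picture in either parity.

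\textbf{The key mechanism is missing.} The paper's proof does \emph{not} appeal to the explicit generators of $\mathrm{Ker}(p^*)$ from~\cite{D8}; that would be circular, since describing $\mathrm{Ker}(p^*)$ is exactly what is at stake. Instead, the paper computes the $2$-ranks $r_i$ of $H^i(\bpm)$ independently, via the Bockstein long exact sequence together with Corollary~\ref{handel68B} and the known top group $H^{2m-1}(\bpm)$ (equations~(\ref{spliteadas})--(\ref{ranks}) and~(\ref{spliteadas2})--(\ref{rangoagain})). A counting argument then forces each differential~(\ref{sharp}) (and, for $m\equiv1\bmod4$,~(\ref{difiny2}) and~(\ref{difiny3})) to be injective \emph{even after tensoring with $\Z2$}. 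That last clause is precisely the content of Proposition~\ref{mono4}: it says that twice the generator of a $\mathbb{Z}_4$-summand in the bottom row is never hit. Your ``parity/degree bookkeeping'' does not supply this; knowing only that the source of the differential is $2$-torsion does not rule out its image being $2z$ for $z$ of order~$4$.

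\textbf{The second assertion is argued in the wrong direction.} A class $z$ in $E_2^{4\ell,0}$ supports no outgoing differential (the bottom row never has outgoing differentials in this cohomological CLSS); the question is whether $z$ or $2z$ lies in the \emph{image} of an incoming differential. So the contrapositive you set up (``a differential out of $z$ would land\dots'') is not the relevant one. Also, invoking Propositions~\ref{HFpar} and~\ref{HF1m4} here is circular: in the paper those are established simultaneously with Proposition~\ref{mono4}, all as consequences of the same rank-forcing analysis of differentials.
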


\begin{nota}\label{F2dimension}{\em
By Lemma~\ref{kunneth} below, 
$\mathrm{Ker}(p^*)$ is also killed by multiplication by 2
when $G=\Z2\times \Z2$ (any $m$, any dimension).
Our approach allows us to explicitly describe
the (dimension-wise) 2-rank of $\mathrm{Ker}(p^*)$ in the cases where we 
know this is an $\F2$-vector space (i.e.~when either $G=\Z2\times\Z2$ or
$m\not\equiv3\bmod4$, 
see Examples~\ref{muchkernel1} and~\ref{muchkernel2}). 
Unfortunately the methods used in the proofs of Proposition~\ref{mono4}
and Theorems~\ref{HFpar} and~\ref{HF1m4} break down for $E_{4n+3,D_8}$,
and Section~\ref{problems3} in the preliminary 
version~\cite{v1} of this paper
discusses a few such aspects, mainly focusing attention on the case $n=0$.
We hope this paper serves as a motivation to study the case of 
$B(\P^{4n+3},2)$ in Theorem~\ref{HF1m4}, as well as to get a hold on the 
complete ring structure of $H^*(E_m)$ or, for that matter, on the kernel 
of $p^*$---aiming, for instance, at the 
geometric applications sketched in the paragraph containing~(\ref{classify}).
}\end{nota}

The spectral sequence methods in this paper are similar 
in spirit to those in~\cite{idealvalued}
and~\cite{FZ}. In the latter reference, Feichtner and Ziegler
describe the integral cohomology rings of {\it ordered} configuration spaces
on spheres by means of a full analysis of the Serre
spectral sequence (SSS) associated
to the Fadell-Neuwirth fibration $\pi\colon F(S^k,n)\to S^k$ given by
$\pi(x_1,\ldots, x_n)=x_n$ (a similar study is carried out
in~\cite{FZ02}, but in the context of {\em 
ordered} orbit configuration spaces). 
One of the main achievements of the present
paper is a successful calculation of cohomology groups  
of {\it unordered} configuration spaces (on real projective spaces),
where no Fadell-Neuwirth 
fibrations are available---instead we rely on
Lemma~\ref{inicio2Handel} and the CLSS\footnote{Our 
CLSS calculations can also be done in terms of the SSS of the fibration
$V_{m+1,2}\stackrel{\theta}\to E_{m,G}\stackrel{p}{\to} BG$.} 
of the $G$-action on $V_{m+1,2}$. Also worth stressing is the fact 
that we succeed in computing cohomology groups with {\it integer} coefficients,
whereas the Leray spectral sequence (and its $\Sigma_k$-invariant version) 
for the inclusion $F(X,k)\hookrightarrow X^k$ has proved to be 
effectively computable mainly when {\it field} coefficients are used
(\cite{FeTa,Totaro}).

\medskip
A major obstacle we have to confront (not 
present in~\cite{FZ}) comes from the fact that the spectral sequences
we encounter often have non-simple systems of local coefficients.
This is also the situation in~\cite{idealvalued}, where 
the two-hyperplane case of Gr\"unbaum's mass partition 
problem~(\cite{grunbaum}) is studied from the Fadell-Husseini 
index theory viewpoint~\cite{FH}. Indeed, Blagojevi\'c and Ziegler
deal with twisted coefficients in their main SSS, namely the one associated to 
the Borel fibration
\begin{equation}\label{FHindex}
S^m\times S^m \to ED_8\times_{D_8}(S^m\times S^m)
\stackrel{\overline{p}}\to BD_8
\end{equation}
where the $D_8$-action on $S^m\times S^m$ is the obvious extension of 
that in Definition~\ref{inicio1Handel}. Now, the main goal in~\cite{idealvalued}
is to describe the kernel of the map induced by $\overline{p}$ in integral 
cohomology---the so-called Fadell-Husseini ($\mathbb{Z}$-)index of $D_8$
acting on $S^m\times S^m$, Index$_{D_8}(S^m\times S^m)$. 
Since $D_8$ acts freely on $V_{m+1,2}$, Index$_{D_8}(S^m\times 
S^m)$ is contained in the kernel of the map induced in integral
cohomology by the map $p\colon E_m\to BD_8$
in Proposition~\ref{mono4} (whether or not $m\equiv3\bmod4$). 
In particular, the work in~\cite{idealvalued} can be used to
identify explicit elements in Ker$(p^*)$ and, as observed
in Remark~\ref{F2dimension}, our approach allows us to assess, for $m\not
\equiv3\bmod4$ (in Examples~\ref{muchkernel1} and \ref{muchkernel2}), how
much of the actual kernel is still lacking 
description: \cite{idealvalued} gives just a bit less than half 
the expected elements in Ker$(p^*)$.

\section{Preliminary cohomology facts}\label{HBprelim}
As shown {in~\cite{AM} (see also~\cite{handel68}
for a straightforward approach)}, the mod 2 cohomology of $D_8$ is 
a polynomial ring on three generators $x,x_1,x_2\in H^*(BD_8;\F2)$, 
the first two of dimension 1, and the last one of dimension 2,
subject to the single relation $x^2=x\cdot x_1$. The classes $x_i$ are
the restrictions of the universal Stiefel-Whitney classes $w_i$
($i=1,2$) under the map corresponding to the group inclusion
$D_8\subset\mathrm{O}(2)$ in Definition~\ref{inicio1Handel}. 
On the other hand, the class $x$ is not characterized by the relation
$x^2=x\cdot x_1$, but by the requirement that, for all $m$, $x$ 
pulls back to~(\ref{classify}) under the map $p_{m,D_8}$ 
in~(\ref{lap})---see~\cite[Proposition~3.5]{handel68}. 
In particular:

\begin{lema}\label{handel68D}
For $i\geq0$, $H^i(BD_8;{\F2})=\langle i+1\rangle$.\hfill\cajita
\end{lema}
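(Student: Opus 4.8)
The plan is to read the statement off the ring description of $H^*(BD_8;\mathbb{F}_2)$ recalled immediately above: as a graded algebra it is $\mathbb{F}_2[x,x_1,x_2]\big/(x^2-xx_1)$ with $|x|=|x_1|=1$ and $|x_2|=2$. Since cohomology with $\mathbb{F}_2$-coefficients is automatically an $\mathbb{F}_2$-vector space, each $H^i(BD_8;\mathbb{F}_2)$ is elementary abelian, so in the paper's notation the assertion $H^i(BD_8;\mathbb{F}_2)\cong\langle i+1\rangle$ is exactly the statement $\dim_{\mathbb{F}_2}H^i(BD_8;\mathbb{F}_2)=i+1$. Thus the whole proof reduces to a dimension count in the quotient ring.

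For that count I would argue via Hilbert series. The polynomial ring $\mathbb{F}_2[x,x_1,x_2]$ is an integral domain, so the homogeneous degree-$2$ element $x^2-xx_1=x(x+x_1)$ is a non-zero-divisor; hence the quotient is a hypersurface and its Hilbert series is that of the polynomial ring multiplied by $(1-t^2)$:
$$\sum_{i\geq0}\dim_{\mathbb{F}_2}H^i(BD_8;\mathbb{F}_2)\,t^i\;=\;\frac{1-t^2}{(1-t)^2(1-t^2)}\;=\;\frac{1}{(1-t)^2}\;=\;\sum_{i\geq0}(i+1)\,t^i.$$
Comparing coefficients of $t^i$ gives $\dim_{\mathbb{F}_2}H^i(BD_8;\mathbb{F}_2)=i+1$. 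Equivalently, and more hands-on, the relation $x^2=xx_1$ shows that every monomial is congruent to one of the form $x^{\varepsilon}x_1^{b}x_2^{c}$ with $\varepsilon\in\{0,1\}$; these span, they are linearly independent (their number per degree matches the Hilbert series just computed), and counting the pairs $(b,c)$ with $b+2c=i$ and with $b+2c=i-1$ yields $(\lfloor i/2\rfloor+1)+(\lfloor(i-1)/2\rfloor+1)=i+1$ for $i\geq1$, the lone monomial $1$ handling $i=0$.

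I do not expect a real obstacle here: the only point needing a line of justification is that the listed monomials genuinely form a basis, i.e. that $(x^2-xx_1)$ carries no hidden syzygies beyond the evident one — and this is immediate from the non-zero-divisor remark (or from a one-relation Gröbner-basis check). Everything else is the recalled presentation plus elementary bookkeeping, so the argument is short. An alternative route, if one prefers not to quote the ring structure, is to run the Lyndon–Hochschild–Serre spectral sequence of the extension~(\ref{wreath}) with $\mathbb{F}_2$-coefficients, identifying the $\mathbb{Z}_2$-action on $H^*(B(\mathbb{Z}_2\times\mathbb{Z}_2);\mathbb{F}_2)=\mathbb{F}_2[y_1,y_2]$ that swaps $y_1\leftrightarrow y_2$; this requires somewhat more care with the spectral sequence but produces the same count. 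I would present the Hilbert-series computation as the main proof, as it is the most economical.
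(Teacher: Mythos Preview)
Your proposal is correct and takes essentially the same approach as the paper: the paper simply records the presentation $H^*(BD_8;\mathbb{F}_2)=\mathbb{F}_2[x,x_1,x_2]/(x^2+xx_1)$ and marks the lemma with a QED box, treating the dimension count as immediate; you have just made that count explicit via the Hilbert series (or equivalently the monomial basis $x^{\varepsilon}x_1^{b}x_2^{c}$, $\varepsilon\in\{0,1\}$). There is nothing to add.
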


\begin{corolario}\label{handel68B}
For any $m$, $$H^i(B(\P^m,2);{\F2})=\begin{cases}\langle 
i+1\rangle,& 0\leq i\leq m-1;\\
\langle2m-i\rangle,& m\leq i\leq 2m-1;\\0,&\mbox{otherwise}.\end{cases}$$
\end{corolario}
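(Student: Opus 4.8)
The plan is to combine Lemma \ref{inicio2Handel} (with $G = D_8$), which identifies $B(\P^m,2)$ up to homotopy with $E_m = E_{m,D_8}$, with the mod $2$ Cartan--Leray spectral sequence of the free $D_8$-action on $V_{m+1,2}$. That spectral sequence has $E_2^{s,t} = H^s(BD_8; \mathcal{H}^t(V_{m+1,2};\F2))$ and converges to $H^*(E_m;\F2)$; since we are working with field coefficients and $V_{m+1,2}$ is the Stiefel manifold, its $\F2$-cohomology is an exterior algebra $\Lambda(a_{m-1}, a_m)$ on classes in degrees $m-1$ and $m$ (recall $V_{m+1,2}$ is the unit tangent bundle of $S^m$, so this is standard). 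The local coefficient system is in fact simple here because the $D_8$-action on $H^*(V_{m+1,2};\F2)$ is trivial with $\F2$-coefficients — swapping the two frame vectors and negating them acts trivially on mod $2$ cohomology of the Stiefel manifold. So $E_2^{s,t} = H^s(BD_8;\F2) \otimes H^t(V_{m+1,2};\F2)$, and by Lemma \ref{handel68D} the columns $t = 0, m-1, m, 2m-1$ each look like the graded vector space $\{\langle s+1\rangle\}_{s \ge 0}$.

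The key step is to pin down the differentials. The total space $E_m$ is a manifold of dimension $\dim V_{m+1,2} = 2m-1$, so $H^i(E_m;\F2) = 0$ for $i > 2m-1$; this forces the spectral sequence to collapse most of the upper part. More precisely, I would argue that the class $a_{m-1}$ in bidegree $(0,m-1)$ must transgress (or support an earlier differential) nontrivially, and likewise for $a_m$ — otherwise these classes would survive to give nonzero cohomology in arbitrarily high degree after multiplying by powers of generators of $H^*(BD_8;\F2)$, contradicting finite-dimensionality. The natural candidate is $d_m(a_{m-1}) = $ (a polynomial class of degree $m$ in $H^*(BD_8;\F2)$) and $d_{m+1}(a_m) = $ (a class of degree $m+1$); the precise identification of these target classes can be read off from the known structure of $H^*(BD_8;\F2) = \F2[x,x_1,x_2]/(x^2 = xx_1)$ together with the characterization of $x$ via the double cover $F(\P^m,2) \to B(\P^m,2)$ recalled just before the statement. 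In fact the cleanest route is: the edge homomorphism $H^s(BD_8;\F2) \to H^s(E_m;\F2)$ is exactly $p^*$, which is an isomorphism for $s \le m-1$ (nothing can hit those columns before $E_m$-page differentials, and $a_{m-1}$ first appears as a target in degree $m$), and for $m \le s$ the range $t=0$ column gets truncated by the differentials out of the $t = m-1$ and $t=m$ columns.

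Then I would do the bookkeeping: after the $d_m$ and $d_{m+1}$ differentials, the surviving $E_\infty$ has, in total degree $i$ with $0 \le i \le m-1$, dimension $i+1$ (coming untouched from the bottom row), and in total degree $i$ with $m \le i \le 2m-1$, the bottom-row contribution $\langle i+1 \rangle$ is cut down by the image of $d_m$ from $(i-m, m-1)$ and $d_{m+1}$ from $(i-m-1, m)$, while the $t = 2m-1$ row is entirely killed; summing the ranks yields exactly $2m - i$. Care is needed to check there are no further differentials and no extension problems — but over $\F2$ extensions are not an issue for computing dimensions, and degree reasons (the next possible differential would be $d_{2m}$ or later, landing outside the relevant range) rule out anything else. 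The main obstacle I anticipate is rigorously justifying that the two exterior generators die against the expected polynomial classes rather than against each other or against products, and confirming that the truncation pattern on the bottom row is precisely $2m-i$ and not something larger; this is where the explicit ring structure of $H^*(BD_8;\F2)$ and the Poincar\'e-duality/finite-dimensionality constraint on $E_m$ must be used together. Once that is in place, reading off the dimensions gives the stated formula, with the top class in degree $2m-1$ being $\langle 1 \rangle = \F2$, consistent with $E_m$ being a closed $(2m-1)$-manifold.
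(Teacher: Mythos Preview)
Your overall setup matches the paper's: both arguments run the mod~$2$ Serre/Cartan--Leray spectral sequence for the fibration $V_{m+1,2}\to E_m\to BD_8$ (with untwisted $\F2$ coefficients, as you correctly observe). The difference is in how the two arguments finish.

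The paper does \emph{not} attempt the full differential bookkeeping you outline. It instead makes two economies. First, since $E_m$ is a closed $(2m-1)$--manifold, $\F2$--Poincar\'e duality gives $H^i(E_m;\F2)\cong H^{2m-1-i}(E_m;\F2)$ outright, so the range $m\le i\le 2m-1$ is deduced from $0\le i\le m-1$ with no spectral-sequence analysis at all in the upper half. Second, for $0\le i\le m-1$ the paper uses the $(m-2)$--connectivity of $V_{m+1,2}$ to get that $p^*$ is an isomorphism for $i\le m-2$ and a monomorphism for $i=m-1$; the remaining fact that $H^{m-1}(E_m;\F2)$ is exactly $\langle m\rangle$ (i.e.\ that nothing extra survives from $E_\infty^{0,m-1}$) is handled by citing Handel~\cite[Proposition~3.6 and~(3.8)]{handel68} for the statement that both indecomposable fiber classes $a_{m-1},a_m$ transgress nontrivially.

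The gap you yourself flag is real and is exactly what the paper sidesteps by that citation. Finite-dimensionality of $H^*(E_m;\F2)$ does \emph{not} by itself force $a_{m-1}$ to transgress: a nonzero $d_2\colon E_2^{s,m}\to E_2^{s+2,m-1}$ sending $a_m\mapsto a_{m-1}\cdot w$ for some $0\ne w\in H^2(BD_8;\F2)$ would also kill infinitely many classes on the $(q=m-1)$--line, so the ``otherwise cohomology is infinite'' argument is inconclusive as stated. One needs either an external input (Handel's transgression computation) or a sharper counting constraint---and the cleanest such constraint is precisely $\F2$--Poincar\'e duality, which reduces everything to degrees $\le m-1$ where only the single question of whether $a_{m-1}$ survives remains. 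Your proposed full-spectral-sequence bookkeeping can be made to work, but only after this point is settled; the paper's route is shorter because it never needs to resolve any differential beyond the one governing $E_\infty^{0,m-1}$.
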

\begin{proof}
The assertion for $i\geq2m$ follows from Lemma~\ref{inicio2Handel} and 
dimensional considerations.
Poincar\'e duality implies that
the assertion for $m\leq i\leq 2m-1$ follows from that for $0\leq i\leq m-1$.
Since $V_{m+1,2}$ is ($m-2$)-connected, 
the assertion for $0\leq i\leq m-1$ follows from Lemma~\ref{handel68D},
using the fact (a consequence 
of~\cite[Proposition~3.6 and~(3.8)]{handel68})
that, in the mod 2 SSS for the fibration $V_{m+1,2}\stackrel{\theta}{\to}
E_{m,D_8}\stackrel{p}{\to} BD_8$, 
the two indecomposable elements in $H^*(V_{m+1,2};\F2)$
transgress to nontrivial elements.
\end{proof}

Let $\mathbb{Z}_\alpha$ denote the $\mathbb{Z}[D_8]$-module 
whose underlying group is free on a generator $\alpha$ on which 
each of $\rho,\rho_1,\rho_2\in D_8$ acts via multiplication
by $-1$ (in particular, elements in $D_8\cap\mathrm{SO}(2)$ act 
trivially). Corollaries~\ref{HBD} and~\ref{HBDT} below 
are direct consequences of the following description, proved 
in~\cite{handeltohoku} (see also~\cite[Theorem~4.5]{idealvalued}),
of the ring $H^*(BD_8)$ and of the $H^*(BD_8)$-module $H^*(BD_8;
\mathbb{Z}_\alpha)$:

\begin{teorema}[Handel~\cite{handeltohoku}]\label{modulestructure}
$H^*(BD_8)$ is generated by classes $\mu_2$, $\nu_2$, $\lambda_3$, 
and $\kappa_4$ subject to the relations
$2\mu_2=2\nu_2=2\lambda_3=4\kappa_4=0$, $\nu_2^2=\mu_2\nu_2$, and 
$\lambda_3^2=\mu_2\kappa_4$.
$H^*(BD_8;\mathbb{Z}_\alpha)$ is the free $H^*(BD_8)$-module on 
classes $\alpha_1$ and $\alpha_2$ subject to the relations 
$2\alpha_1=4\alpha_2=0$, $\lambda_3\alpha_1=\mu_2\alpha_2$, and 
$\kappa_4\alpha_1=\lambda_3\alpha_2$. Subscripts in the notation of these 
six generators indicate their cohomology dimensions.\hfill\cajita
\end{teorema}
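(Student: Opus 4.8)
The plan is to bootstrap everything from the mod $2$ cohomology ring $H^*(BD_8;\mathbb{F}_2)=\mathbb{F}_2[x,x_1,x_2]/(x^2-xx_1)$ recalled above, running it through the integral and twisted Bockstein spectral sequences. The first ingredient is the $\mathrm{Sq}^1$-action: $\mathrm{Sq}^1x=x^2=xx_1$ and $\mathrm{Sq}^1x_1=x_1^2$ because $x,x_1$ are one-dimensional, while $\mathrm{Sq}^1x_2=w_1w_2+w_3=x_1x_2$ by the Wu formula, since $x_2=w_2$ comes from a rank-$2$ bundle and so $w_3=0$. As $\mathrm{Sq}^1$ is a derivation, in the $\mathbb{F}_2$-basis $\{x_1^bx_2^c\}\cup\{xx_1^bx_2^c\}$ it is ``diagonal'', carrying each basis monomial to a $0/1$ multiple of the next one (scalar $b+c$ in the first family, $b+c+1$ in the second); this makes the Bockstein computation purely mechanical.

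Running the integral Bockstein spectral sequence with $E_1=H^*(BD_8;\mathbb{F}_2)$ and $d_1=\mathrm{Sq}^1$, the diagonal form yields at once $E_2\cong\mathbb{F}_2[\xi]\otimes\Lambda[\eta]$ with $\xi=[x_2^2]$ in degree $4$ and $\eta=[xx_2]$ in degree $3$; since $d_2\xi\in E_2^5=0$ and $E_\infty=\mathbb{F}_2$ in degree $0$ (the group is finite), convergence forces $d_2\eta=\xi$, and the sequence collapses at $E_3$. From this I would read off the additive structure: $H^*(BD_8;\mathbb{Z})$ is $\mathbb{Z}$ in degree $0$ and of exponent $4$ in positive degrees, with exactly one $\mathbb{Z}_4$-summand in each positive degree divisible by $4$ (produced by $d_2$ on $\eta\xi^j$) and $\mathbb{Z}_2$'s otherwise, counted degreewise by the nonzero $d_1$'s.

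For the ring structure I would take $\mu_2=\tilde\beta(x_1)$, $\nu_2=\tilde\beta(x)$, $\lambda_3=\tilde\beta(x_2)$ --- integral Bocksteins, with mod $2$ reductions $x_1^2$, $xx_1$, $x_1x_2$ and order $2$ --- and let $\kappa_4$ generate the $\mathbb{Z}_4$ in $H^4$ lifting $x_2^2$. Using only $x^2=xx_1$ one checks that the monomials $\mu_2^a\nu_2^b\lambda_3^c\kappa_4^d$ with $b,c\in\{0,1\}$ reduce mod $2$ to a degreewise spanning set of the image of reduction (the $\kappa_4$-multiples carrying the $2$-divisible part), so by the Bockstein count they generate. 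The relations involving $2$ and $4$ are built in; $\nu_2^2=\mu_2\nu_2$ and $\lambda_3^2=\mu_2\kappa_4$ hold after mod $2$ reduction (again a single line with $x^2=xx_1$), hence up to an element of $2H^4(BD_8;\mathbb{Z})=\langle2\kappa_4\rangle$. To eliminate this ambiguity I would restrict to the cyclic subgroup $C=D_8\cap\mathrm{SO}(2)\cong\mathbb{Z}_4$: there $x_1|_C=w_1|_C=0$, so $\mu_2|_C=0$ and $\lambda_3|_C\in H^3(B\mathbb{Z}_4;\mathbb{Z})=0$, whence $\mu_2\nu_2|_C$, $\lambda_3^2|_C$ and $\mu_2\kappa_4|_C$ all vanish, while $\kappa_4|_C$ reduces mod $2$ to the nonzero class $x_2^2|_C$ and so generates $H^4(B\mathbb{Z}_4;\mathbb{Z})=\mathbb{Z}_4$, forcing $2\kappa_4|_C\neq0$; hence both relations hold on the nose. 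A closing degreewise count of the monomials $\mu_2^a\nu_2^b\lambda_3^c\kappa_4^d$ ($b,c\in\{0,1\}$; order $4$ if $d>0$, else $2$) against the Bockstein data shows the presentation is complete.

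For $H^*(BD_8;\mathbb{Z}_\alpha)$ I would repeat this program with the twisted Bockstein. Since $\mathbb{Z}_\alpha\otimes\mathbb{F}_2$ is trivial, $E_1=H^*(BD_8;\mathbb{F}_2)$ again, now with $d_1(z)=\mathrm{Sq}^1z+w_1(\mathbb{Z}_\alpha)\cup z$ and $w_1(\mathbb{Z}_\alpha)=x_1$, because $\mathbb{Z}_\alpha$ is the determinant character of $\mathrm{O}(2)$ restricted to $D_8$. This $d_1$ is again diagonal, of the opposite parity; $E_2$ is then the module over $\mathbb{F}_2[\xi]\otimes\Lambda[\eta]$ on generators $x$ (degree $1$) and $x_2$ (degree $2$) subject to $\eta\cdot x_2=\xi\cdot x$ and $\eta\cdot x=0$, and $d_2(x)=x_2$ is forced, so the sequence collapses to $0$ at $E_3$. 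Hence $H^*(BD_8;\mathbb{Z}_\alpha)$ has exponent $4$ with a single $\mathbb{Z}_4$ in each degree $\equiv2\pmod4$; taking $\alpha_1=\tilde\beta_\alpha(1)$ (reduction $x_1$, order $2$) and $\alpha_2$ a generator of the $\mathbb{Z}_4$ in $H^2$ lifting $x_2$, the free-$H^*(BD_8;\mathbb{Z})$-module description and the relations $\lambda_3\alpha_1=\mu_2\alpha_2$, $\kappa_4\alpha_1=\lambda_3\alpha_2$ follow from mod $2$ reductions and the count --- this time degrees $4$ and $5$ carry no $4$-torsion, so the ring-case ambiguity does not recur. (Alternatively one may run this step through the Gysin sequence of the double cover $B\mathbb{Z}_4=B(D_8\cap\mathrm{SO}(2))\to BD_8$ classified by $x_1$, whose connecting maps are cup product with the twisted Euler class $\alpha_1$ and whose remaining terms $H^*(BD_8;\mathbb{Z})$ and $H^*(B\mathbb{Z}_4;\mathbb{Z})=\mathbb{Z}[t]/(4t)$ are already known.) I expect the real difficulty to be neither spectral sequence but the $2$-adic bookkeeping around the $4$-torsion --- distinguishing integral classes that share a mod $2$ reduction (exactly where $\nu_2^2=\mu_2\nu_2$ and $\lambda_3^2=\mu_2\kappa_4$ could have failed by $2\kappa_4$), choosing the $\mathbb{Z}_4$-generators $\kappa_4$ and $\alpha_2$ compatibly, and keeping the local system straight in the twisted case; the restriction-to-$\mathbb{Z}_4$ device settles each ambiguity, provided it is applied systematically.
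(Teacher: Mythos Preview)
The paper does not prove this theorem: it is quoted from Handel's paper~\cite{handeltohoku} and closed with a box, so there is no ``paper's own proof'' to compare against. Your outline is the standard Bockstein-spectral-sequence route to Handel's result and is essentially sound; two small rough spots are worth tightening.

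First, in your restriction-to-$C=\mathbb{Z}_4$ argument for $\nu_2^2=\mu_2\nu_2$ you verify $\mu_2\nu_2|_C=0$ and $2\kappa_4|_C\neq 0$, but never check $\nu_2^2|_C=0$; without that the argument is incomplete. This is easy to fill: the composite $\mathbb{Z}_4\hookrightarrow D_8\twoheadrightarrow D_8/(\mathbb{Z}_2\times\mathbb{Z}_2)$ is onto, so $x|_C$ is the nonzero class in $H^1(B\mathbb{Z}_4;\mathbb{F}_2)$, hence $\nu_2|_C=\tilde\beta(x|_C)=2t$ in $H^2(B\mathbb{Z}_4;\mathbb{Z})=\mathbb{Z}_4\langle t\rangle$ and $\nu_2^2|_C=4t^2=0$. (The degree-$6$ relation $\lambda_3^2=\mu_2\kappa_4$ does not even need the restriction step: $H^6(BD_8)=\langle4\rangle$ is elementary abelian, so mod~$2$ reduction is already injective there.)

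Second, in your closing count the parenthetical ``order $4$ if $d>0$, else $2$'' is not right: in the presented ring any monomial with $a>0$, $b>0$, or $c>0$ is annihilated by $2$ (since $2\mu_2=2\nu_2=2\lambda_3=0$), so only the pure powers $\kappa_4^d$ with $d>0$ have order~$4$. With that correction the degreewise count does match the Bockstein output (e.g.\ in degree $4k$ one finds $2k+1$ monomials, exactly one of them $\kappa_4^k$ of order~$4$, against $H^{4k}=\{2k\}$), and completeness follows.
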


The notation $a_2$, $b_2$, $c_3$, and $d_4$ 
was used in~\cite{handeltohoku}
instead of the current $\mu_2$, $\nu_2$, $\lambda_3$, and $\kappa_4$. 
The change is made in order to avoid confusion with the generic notation $d_i$ 
for differentials in the several spectral sequences considered
in this paper.

\begin{corolario}\label{HBD}
For $a\geq0$ and $0\leq b\leq3$,
$$
H^{4a+b}(BD_8)=\begin{cases}
\mathbb{Z}, & (a,b)=(0,0);\\
\{2a\}, & b=0<a;\\
\langle 2a\rangle, & b=1;\\
\langle 2a+2\rangle, & b=2;\\
\langle 2a+1\rangle, & b=3.
\end{cases}
$$

\vspace{-7mm}\ \hfill\cajita
\end{corolario}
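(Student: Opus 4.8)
The plan is to read off the corollary directly from Handel's presentation in Theorem~\ref{modulestructure}: first produce an explicit additive generating set of $H^*(BD_8)$ consisting of monomials in $\mu_2,\nu_2,\lambda_3,\kappa_4$, then argue these are a $\mathbb{Z}$-basis of the direct sum of the cyclic groups they generate, and finally count, degree by degree, how many such monomials have a given total degree $4a+b$.

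\emph{Reduction to a normal form.} Starting from an arbitrary monomial $\mu_2^{\,i}\nu_2^{\,j}\lambda_3^{\,k}\kappa_4^{\,l}$, the relation $\nu_2^2=\mu_2\nu_2$ lets one replace $\nu_2^{\,j}$ by $\mu_2^{\,j-1}\nu_2$ whenever $j\geq1$, and the relation $\lambda_3^2=\mu_2\kappa_4$ lets one replace $\lambda_3^{\,k}$ by $(\mu_2\kappa_4)^{\lfloor k/2\rfloor}\lambda_3^{\,k\bmod 2}$; each substitution strictly lowers the exponent being reduced, so iterating them writes every element of $H^*(BD_8)$ as a $\mathbb{Z}$-linear combination of reduced monomials $\mu_2^{\,i}\nu_2^{\,\epsilon}\lambda_3^{\,\delta}\kappa_4^{\,l}$ with $i,l\geq0$ and $\epsilon,\delta\in\{0,1\}$. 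By Theorem~\ref{modulestructure} the listed relations are all the relations, so these reduced monomials are $\mathbb{Z}$-independent and $H^*(BD_8)$ is the direct sum of the cyclic groups they generate. Their orders follow from $2\mu_2=2\nu_2=2\lambda_3=0$ and $4\kappa_4=0$: the empty monomial $1$ generates a $\mathbb{Z}$, each pure power $\kappa_4^{\,l}$ with $l\geq1$ generates a $\mathbb{Z}_4$ (using $2\kappa_4^{\,l}\neq0$, again part of the presentation), and any reduced monomial in which $\mu_2$, $\nu_2$ or $\lambda_3$ actually occurs is annihilated by $2$ and hence generates a $\mathbb{Z}_2$.

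\emph{Counting in degree $4a+b$.} It remains to count reduced monomials with $2i+2\epsilon+3\delta+4l=4a+b$, $0\leq b\leq3$. Reducing this equation mod $4$ and using that $2i+2\epsilon+4l$ is even, a short parity check forces $\delta=0$ when $b$ is even and $\delta=1$ when $b$ is odd, and in each case it fixes the parity of $i+\epsilon$ (even for $b\in\{0,3\}$, odd for $b\in\{1,2\}$). The surviving count is then the number of lattice points $(i,l)$ on a line segment, for each allowed value of $\epsilon$, and comes out linear in $a$: one finds $2a+1$ reduced monomials when $b=0$ and $a\geq1$, one of which is $\kappa_4^{\,a}$, giving the $\mathbb{Z}_4$ summand and hence $H^{4a}(BD_8)=\mathbb{Z}_4\oplus\langle 2a\rangle=\{2a\}$; for $b=1,2,3$ one gets respectively $2a$, $2a+2$, $2a+1$ reduced monomials, all of order $2$, yielding $\langle 2a\rangle$, $\langle 2a+2\rangle$, $\langle 2a+1\rangle$; and in degree $0$ only the monomial $1$ survives, giving $\mathbb{Z}$.

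\emph{Main obstacle.} I do not expect a serious obstacle: the statement is essentially a bookkeeping consequence of Theorem~\ref{modulestructure}. The one point needing a little care is the step where "generated by $\dots$ subject to the relations $\dots$" is upgraded to the $\mathbb{Z}$-linear independence of the reduced monomials, i.e.\ confirming that the four additive relations together with $\nu_2^2=\mu_2\nu_2$ and $\lambda_3^2=\mu_2\kappa_4$ already generate every relation among monomials; this is exactly the content of Handel's theorem and, if desired, can be re-derived by a leading-term (Gröbner basis) argument on the presenting ideal. Everything else is the elementary mod-$4$ lattice-point count sketched above.
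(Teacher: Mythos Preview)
Your proposal is correct and is exactly the approach the paper has in mind: the paper states Corollary~\ref{HBD} as a ``direct consequence'' of Theorem~\ref{modulestructure} with no further argument, and you have simply written out the monomial normal form and degree-by-degree count that this entails. The counts check out in all four residue classes, and your identification of $\kappa_4^{\,a}$ as the unique $\mathbb{Z}_4$-generator in degree $4a$ is the right way to see the $\{2a\}$.
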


\begin{corolario}\label{HBDT}
{For $a\geq0$ and $0\leq b\leq3$,}
$$
{H^{4a+b}(BD_8;\mathbb{Z}_\alpha)=\begin{cases}
\langle 2a\rangle, & b=0;\\
\langle 2a+1\rangle, & b=1;\\
%\langle 2a\rangle\oplus\mathbb{Z}_4, 
\{2a\},& b=2;\\
\langle 2a+2\rangle, & b=3.
\end{cases}}
$$

\vspace{-7mm}\ \hfill\cajita
\end{corolario}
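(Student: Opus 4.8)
The plan is to derive the $\mathbb{Z}[D_8]$-module structure of $H^*(BD_8;\mathbb{Z}_\alpha)$ directly from Handel's Theorem~\ref{modulestructure}, exactly as one derives Corollary~\ref{HBD} from the ring structure of $H^*(BD_8)$. By Theorem~\ref{modulestructure}, $H^*(BD_8;\mathbb{Z}_\alpha)$ is the free $H^*(BD_8)$-module on the two generators $\alpha_1$ (dimension $1$, killed by $2$) and $\alpha_2$ (dimension $3$, killed by $4$), modulo the two relations $\lambda_3\alpha_1=\mu_2\alpha_2$ and $\kappa_4\alpha_1=\lambda_3\alpha_2$. So the first step is simply to list, in each total degree $4a+b$ with $0\le b\le 3$, a spanning set of monomials $\xi\cdot\alpha_j$ where $\xi$ runs over the additive generators of $H^*(BD_8)$ already tabulated in Corollary~\ref{HBD}. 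Since $H^*(BD_8)$ in degree $d$ is generated by monomials in $\mu_2,\nu_2,\lambda_3,\kappa_4$ subject to $\nu_2^2=\mu_2\nu_2$ and $\lambda_3^2=\mu_2\kappa_4$, a convenient normal form is $\kappa_4^a\cdot\{1,\mu_2,\nu_2,\lambda_3,\mu_2\nu_2,\ldots\}$, and one reads off the degree-$d$ part accordingly.

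Next I would impose the two module relations to cut the spanning set down to a basis and keep track of torsion orders. The relation $\lambda_3\alpha_1=\mu_2\alpha_2$ lets one eliminate, in each degree, the monomials involving $\lambda_3\cdot(\text{stuff})\cdot\alpha_1$ in favor of $\mu_2\cdot(\text{stuff})\cdot\alpha_2$, and likewise $\kappa_4\alpha_1=\lambda_3\alpha_2$ trades $\kappa_4$-multiples of $\alpha_1$ for $\lambda_3$-multiples of $\alpha_2$; iterating, every occurrence of $\alpha_1$ can be pushed to monomials of the form $\mu_2^{i}\nu_2^{j}\alpha_1$ (no $\lambda_3$, no $\kappa_4$), which have order $2$, while the remaining independent generators are $\kappa_4$- and $\lambda_3$- and $\mu_2,\nu_2$-multiples of $\alpha_2$. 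One then counts: the $\alpha_2$-monomials divisible by $\kappa_4^{a}$ but by no further $\kappa_4$ contribute a $\mathbb{Z}_4$ (this is where the $\{2a\}=\langle 2a\rangle\oplus\mathbb{Z}_4$ in the $b=2$ case comes from — $\alpha_2$ itself sits in degree $3$, so its $\kappa_4^a$-multiples land in degree $4a+3$; but multiplying once more by $\mu_2$ or $\nu_2$ shifts to degree $4a+5=4(a+1)+1$, explaining why the lone $\mathbb{Z}_4$ appears precisely in the $b=2$ column after reindexing $a\mapsto a-1$), and all other surviving monomials have order $2$, so each degree is $\langle r\rangle$ or $\{r\}$ for the appropriate rank $r$. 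The bookkeeping is a routine but slightly fiddly count of partitions of $4a+b-\varepsilon$ into the allowed generator degrees; I would organize it by the residue $b$ and verify the four cases $b=0,1,2,3$ separately, checking the answer against the low-degree values one can compute by hand (e.g. $H^0(BD_8;\mathbb{Z}_\alpha)=0$ since $\mathbb{Z}_\alpha^{D_8}=0$, $H^1=\langle 1\rangle$ generated by $\alpha_1$, $H^3=\{0\}$ — i.e. $\mathbb{Z}_4$ — generated by $\alpha_2$, matching $b=3$, $a=0$).

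The main obstacle is purely combinatorial: making sure the two relations are applied consistently so that no generator is double-counted and no torsion order is misattributed, i.e. confirming that after reduction the degree-$(4a+b)$ part really is free abelian-mod-torsion of the stated rank with exactly one $\mathbb{Z}_4$-summand when $b=2$ (and $a\ge1$) and none otherwise. An alternative, perhaps cleaner, route — which I would mention as a cross-check rather than the primary argument — is to use the defining short exact sequence of $\mathbb{Z}[D_8]$-modules $0\to\mathbb{Z}_\alpha\xrightarrow{2}\mathbb{Z}_\alpha\to\mathbb{Z}_2\to 0$ together with the mod $2$ answer $H^i(BD_8;\F2)=\langle i+1\rangle$ from Lemma~\ref{handel68D} and the integral answer from Corollary~\ref{HBD}, playing the resulting long exact (Bockstein) sequence against the known total $\F2$-dimensions; this pins down the ranks and the single $\mathbb{Z}_4$ without re-deriving Handel's module presentation. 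Either way the corollary follows, and I would present the monomial-counting derivation as the main proof with the Bockstein argument as confirmation.
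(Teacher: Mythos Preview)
Your approach is exactly the paper's: the corollary is stated there without proof, as a ``direct consequence'' of Handel's presentation in Theorem~\ref{modulestructure}, and what you propose is precisely the monomial count that unpacks that presentation.

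There is, however, a concrete slip that propagates through your bookkeeping. Theorem~\ref{modulestructure} says that subscripts indicate cohomology dimensions, so $\alpha_2$ lives in degree~$2$, not degree~$3$. (Check the relations: $\lambda_3\alpha_1$ and $\mu_2\alpha_2$ both sit in degree~$4$; $\kappa_4\alpha_1$ and $\lambda_3\alpha_2$ both sit in degree~$5$.) Once you correct this, the location of the lone $\mathbb{Z}_4$-summand is immediate---$\kappa_4^{\,a}\alpha_2$ has degree $4a+2$, i.e.\ $b=2$---and the convoluted reindexing you offer becomes unnecessary. The same slip invalidates your low-degree sanity check: it is $H^2(BD_8;\mathbb{Z}_\alpha)=\{0\}=\mathbb{Z}_4$ (generated by $\alpha_2$), while $H^3(BD_8;\mathbb{Z}_\alpha)=\langle 2\rangle$ is generated by $\mu_2\alpha_1$ and $\nu_2\alpha_1$, matching the case $a=0$, $b=3$ of the corollary. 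With the degree of $\alpha_2$ corrected, your monomial count goes through cleanly and the Bockstein cross-check you mention is a fine confirmation.
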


We show that, up to a certain symmetry condition (exemplified in
Table~\ref{tabla} at the end of Section~\ref{linkinsection}), 
the groups explicitly described 
by Corollaries~\ref{HBD} and~\ref{HBDT} delineate the 
additive structure of the graded group $H^*(B(\P^m,2))$. 
The corresponding situation for $H^*(F(\P^m,2))$ uses the 
following well-known analogues of Lemma~\ref{handel68D} and
Corollaries~\ref{handel68B},~\ref{HBD} and~\ref{HBDT}:

\begin{lema}\label{wellknown}
For $i\geq0$, $H^i(\P^\infty\times\P^\infty;\F2)=\langle i+1\rangle$.
\hfill\cajita
\end{lema}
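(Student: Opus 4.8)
The statement in question, Lemma~\ref{wellknown}, asserts that $H^i(\P^\infty\times\P^\infty;\F2)=\langle i+1\rangle$ for $i\geq0$, i.e.\ that the mod~2 cohomology of $\P^\infty\times\P^\infty$ in degree $i$ is an $\F2$-vector space of dimension $i+1$.

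\bigskip

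\textbf{Proof proposal.} The plan is to invoke the K\"unneth theorem together with the classical computation of $H^*(\P^\infty;\F2)$. First I would recall that $H^*(\P^\infty;\F2)=\F2[t]$ is a polynomial ring on a single generator $t$ in degree~1, so that $H^j(\P^\infty;\F2)=\langle1\rangle$ for every $j\geq0$ (this is the $\F2$-cohomology of $B\Z2$, entirely analogous to the role Lemma~\ref{handel68D} plays for $BD_8$). Since we are working over the field $\F2$, the K\"unneth theorem gives a canonical isomorphism
$$
H^i(\P^\infty\times\P^\infty;\F2)\;\cong\;\bigoplus_{j+k=i}H^j(\P^\infty;\F2)\otimes_{\F2}H^k(\P^\infty;\F2),
$$
with no $\mathrm{Tor}$ terms to worry about. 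Each summand on the right is $\langle1\rangle\otimes_{\F2}\langle1\rangle=\langle1\rangle$, and the index set $\{(j,k):j,k\geq0,\;j+k=i\}$ has exactly $i+1$ elements, so the direct sum is $\langle i+1\rangle$, as claimed. Concretely, $H^*(\P^\infty\times\P^\infty;\F2)=\F2[t_1,t_2]$ with $t_1,t_2$ in degree~1, and the monomials $t_1^{j}t_2^{k}$ with $j+k=i$ form a basis in degree~$i$.

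\bigskip

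There is essentially no obstacle here: the only inputs are the standard fact $H^*(\P^\infty;\F2)=\F2[t]$ and the K\"unneth formula over a field, and the degree count is immediate. If one wished to avoid citing K\"unneth, an equally short alternative would be to note that $\P^\infty\times\P^\infty\simeq B(\Z2\times\Z2)$ and compute the group cohomology $H^*(\Z2\times\Z2;\F2)=\F2[t_1,t_2]$ directly; either route delivers the stated dimension count. The lemma is recorded here only because it is the ordered-configuration-space analogue of Lemma~\ref{handel68D}, providing the $E_2$-input for the Cartan--Leray spectral sequence of the $\Z2\times\Z2$-action on $V_{m+1,2}$ in the same way that Lemma~\ref{handel68D} does for the $D_8$-action.
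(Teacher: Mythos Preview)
Your proof is correct and matches the paper's treatment: the paper simply records that $H^*(\P^\infty\times\P^\infty;\F2)=\F2[x_1,y_1]$ is standard, and your K\"unneth argument is precisely the usual justification of this fact, with the monomial count giving $\langle i+1\rangle$ in degree~$i$. There is nothing to add.
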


\begin{lema}\label{initialF}
For any $m$, $$H^i(F(\P^m,2);{\F2})=\begin{cases}\langle 
i+1\rangle,& 0\leq i\leq m-1;\\
\langle2m-i\rangle,& m\leq i\leq 2m-1;\\0,&\mbox{otherwise}.\end{cases}$$

\vspace{-8mm}\ \hfill\cajita
\end{lema}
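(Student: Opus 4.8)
The statement to prove is Lemma~\ref{initialF}: the mod~2 cohomology of $F(\P^m,2)$ agrees (dimensionwise, as $\F2$-vector spaces) with that of $B(\P^m,2)$ computed in Corollary~\ref{handel68B}, namely it is $\langle i+1\rangle$ for $0\le i\le m-1$, $\langle 2m-i\rangle$ for $m\le i\le 2m-1$, and zero otherwise.

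The plan is to mirror, verbatim, the proof of Corollary~\ref{handel68B}, replacing $D_8$ by $G=\Z2\times\Z2$ throughout. First I would invoke Lemma~\ref{inicio2Handel} with $G=\Z2\times\Z2$, which identifies $F(\P^m,2)$ (up to strong deformation retraction) with $E_{m,\Z2\times\Z2}$, so that dimensional considerations give vanishing for $i\ge 2m$ (since $E_m$ is, up to homotopy, a $(2m-1)$-manifold—indeed $F(\P^m,2)$ is an open subset of $\P^m\times\P^m$). Second, Poincaré duality for the closed... wait, $F(\P^m,2)$ is an open manifold, so instead one uses the fact that $B(\P^m,2)$ is a closed $2m$-manifold and $F(\P^m,2)\to B(\P^m,2)$ is a double cover, hence $F(\P^m,2)$ is also a closed $2m$-manifold; Poincaré duality (not necessarily orientable, with $\F2$ coefficients it is automatic) then reduces the range $m\le i\le 2m-1$ to the range $0\le i\le m-1$. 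Third, for the range $0\le i\le m-1$ I would use the mod~2 Serre spectral sequence of the fibration $V_{m+1,2}\xrightarrow{\theta} E_{m,\Z2\times\Z2}\xrightarrow{p} B(\Z2\times\Z2)=\P^\infty\times\P^\infty$: since $V_{m+1,2}$ is $(m-2)$-connected, in total degrees $\le m-1$ the spectral sequence sees only the base, giving $H^i(E_m;\F2)\cong H^i(\P^\infty\times\P^\infty;\F2)=\langle i+1\rangle$ by Lemma~\ref{wellknown}, provided the first possibly-nontrivial differential (the transgression on the two indecomposable generators of $H^*(V_{m+1,2};\F2)$, in degree $m-1$) does not retroactively affect degrees $\le m-1$—which it cannot, as transgression lands in higher base degree. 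Actually the only subtlety is that $H^{m-1}$ could in principle receive a differential; but differentials out of the base row $E_2^{*,0}$ vanish, and the fibre contributes nothing in total degree $\le m-2$, so $H^i$ for $i\le m-1$ is purely the base, with no room for cancellation. One may alternatively cite the Cartan–Leray spectral sequence of the free $(\Z2\times\Z2)$-action on $V_{m+1,2}$, which is the formulation used elsewhere in the paper.

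The main obstacle—really the only point requiring a word of care—is justifying the claim that $F(\P^m,2)$ is a closed manifold so that Poincaré duality applies in the stated form. This is standard: $F(\P^m,2)$ double-covers the closed manifold $B(\P^m,2)$ (the latter being closed because it is the quotient of the compact manifold-with-corners... more precisely, $B(\P^m,2)$ is a closed $2m$-manifold as explained via Lemma~\ref{inicio2Handel} and the identification $E_{m,D_8}$ with the unordered configuration space, an open subspace of the symmetric square which is itself closed since $\P^m$ is compact and the diagonal has a tubular neighbourhood—so removing it and... ) In any case I would simply remark that $F(\P^m,2)$ is a compact $2m$-manifold without boundary (being an $S^0$-bundle over $B(\P^m,2)$, or directly: it deformation retracts onto $E_{m,\Z2\times\Z2}$ which is a closed manifold), and with $\F2$ coefficients Poincaré duality needs no orientability hypothesis, giving $H^i(F(\P^m,2);\F2)\cong H_{2m-i}(F(\P^m,2);\F2)\cong H^{2m-i}(F(\P^m,2);\F2)$ (the last by universal coefficients over a field). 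Combined with the computation in degrees $\le m-1$ this yields all cases and completes the proof. Since the excerpt explicitly says ``well-known analogues,'' I would keep the write-up to a few lines, pointing to the proof of Corollary~\ref{handel68B} and to the standard reference~\cite{handel68} for the transgression input (Proposition~3.6 and~(3.8) there, whose statements are uniform in $G\subset\mathrm{O}(2)$).
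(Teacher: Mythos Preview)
Your overall strategy is exactly the one the paper uses: repeat the proof of Corollary~\ref{handel68B} with $D_8$ replaced by $\Z2\times\Z2$, invoking Lemma~\ref{inicio2Handel}, Lemma~\ref{wellknown}, the $(m-2)$-connectivity of $V_{m+1,2}$, and the transgression input from~\cite{handel68}. So the plan is correct and essentially identical to the paper's.

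There is, however, a concrete error in your Poincar\'e duality step. Neither $F(\P^m,2)$ nor $B(\P^m,2)$ is a closed $2m$-manifold: each is the complement of the diagonal (or its image) in $\P^m\times\P^m$ (or its symmetric square), hence open and noncompact. The object to which Poincar\'e duality applies is the closed $(2m-1)$-manifold $E_{m,\Z2\times\Z2}=V_{m+1,2}/(\Z2\times\Z2)$ onto which $F(\P^m,2)$ deformation retracts. Duality there reads
\[
H^i(E_m;\F2)\cong H^{2m-1-i}(E_m;\F2),
\]
not $H^i\cong H^{2m-i}$ as you wrote. With the correct exponent, the range $m\le i\le 2m-1$ corresponds under duality to $0\le 2m-1-i\le m-1$, and one gets $H^i=\langle (2m-1-i)+1\rangle=\langle 2m-i\rangle$, matching the statement. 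Your version $H^i\cong H^{2m-i}$ would instead yield $\langle 2m-i+1\rangle$ in that range, which is off by one. Once you fix the dimension to $2m-1$ (and drop the claim that $B(\P^m,2)$ is closed), the argument goes through as intended.
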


\begin{lema}\label{kunneth} For $i\geq0$,
\begin{eqnarray*}
H^i(\P^\infty\times\P^\infty)&=&\begin{cases}
\mathbb{Z}, & i=0; \\ \left\langle\frac{i}2+1\right\rangle, & 
i \mbox{ even }, i>0; \\ \left\langle\frac{i-1}2\right\rangle, 
& \mbox{otherwise}.
\end{cases} \\ 
H^i(\P^\infty\times\P^\infty;\mathbb{Z}_\alpha)&=&\begin{cases} 
\left\langle\frac{i}{2}\right\rangle, & i \mbox{ even};\\ \left\langle
\frac{i+1}{2}\right\rangle, & i \mbox{ odd}. \end{cases}
\end{eqnarray*}
Here $\mathbb{Z}_\alpha$
is regarded as a $(\Z2\times\Z2)$-module via the restricted structure 
coming from the inclusion $\Z2\times\Z2\hookrightarrow D_8$.
\hfill\cajita
\end{lema}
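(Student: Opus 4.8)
The plan is to derive both computations from the Künneth short exact sequence, feeding in the (well-known) cohomology of $\P^\infty$ with constant coefficients and with the sign local system.

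First I would record that $H^0(\P^\infty)=\mathbb{Z}$, that $H^i(\P^\infty)=\Z2$ when $i$ is positive and even, and that $H^i(\P^\infty)=0$ for $i$ odd; this is standard (immediate from the cellular cochain complex of $\P^\infty$, or from Lemma~\ref{wellknown} together with the integral Bockstein). Since $\P^\infty$ has finite type, the cohomology Künneth theorem supplies, for each $n$, a short exact sequence
$$0\to\bigoplus_{i+j=n}H^i(\P^\infty)\otimes H^j(\P^\infty)\to H^n(\P^\infty\times\P^\infty)\to\bigoplus_{i+j=n+1}\mathrm{Tor}\bigl(H^i(\P^\infty),H^j(\P^\infty)\bigr)\to0.$$
The bookkeeping point is that, because $H^*(\P^\infty)$ is concentrated in even degrees, the $\mathrm{Tor}$ term vanishes for $n$ even and the tensor term vanishes for $n$ odd, so in each degree $H^n(\P^\infty\times\P^\infty)$ equals one of the two outer groups outright (no extension problem). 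A summand count then finishes it: for $n=2k>0$ the tensor term consists of $H^0\otimes H^{2k}$, $H^{2k}\otimes H^0$, and the $k-1$ products $H^{2a}\otimes H^{2b}$ with $a,b\geq1$, i.e.\ $\langle k+1\rangle$; for $n=2k+1$ the $\mathrm{Tor}$ term is $\bigoplus_{a+b=k+1,\,a,b\geq1}\mathrm{Tor}(H^{2a},H^{2b})$, i.e.\ $\langle k\rangle$. This is the stated description of $H^i(\P^\infty\times\P^\infty)$.

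For the twisted coefficients I would identify $\P^\infty\times\P^\infty$ with $B(\Z2\times\Z2)$ and, crucially, $\mathbb{Z}_\alpha$ restricted to $\Z2\times\Z2$ with the external tensor product $\mathbb{Z}_{-}\boxtimes\mathbb{Z}_{-}$ of the sign $\Z2$-module with itself: by construction $\rho_1$ and $\rho_2$ each act by $-1$ on $\mathbb{Z}_\alpha$, which is exactly the action on $\mathbb{Z}_{-}\boxtimes\mathbb{Z}_{-}$ coming from the decomposition $\mathbb{Z}[\Z2\times\Z2]=\mathbb{Z}[\Z2]\otimes\mathbb{Z}[\Z2]$. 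Hence the finite-type cochain complex computing $H^*(\P^\infty\times\P^\infty;\mathbb{Z}_\alpha)$ is the tensor product of two copies of the one computing $H^*(\P^\infty;\mathbb{Z}_{-})$, and the algebraic Künneth theorem gives a short exact sequence exactly as above but with every $H^*(\P^\infty)$ replaced by $H^*(\P^\infty;\mathbb{Z}_{-})=H^*(\Z2;\mathrm{sign})$. The latter is $\Z2$ in each positive odd degree and $0$ otherwise (read off from the cellular cochain complex, whose differentials alternate between multiplication by $2$ and by $0$, beginning with $2$ in degree $0$). Since this is concentrated in odd degrees, the same vanishing dichotomy applies, and counting gives $\langle k\rangle$ in degree $2k$ (the $k$ products $H^{2a+1}\otimes H^{2b+1}$ with $a+b=k-1$) and $\langle k+1\rangle$ in degree $2k+1$ (the $k+1$ terms $\mathrm{Tor}(H^{2a+1},H^{2b+1})$ with $a+b=k$), matching the claimed formula for $H^i(\P^\infty\times\P^\infty;\mathbb{Z}_\alpha)$.

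There is no genuinely hard step here. The only point that needs care is the identification of $\mathbb{Z}_\alpha|_{\Z2\times\Z2}$ with the external tensor product $\mathbb{Z}_{-}\boxtimes\mathbb{Z}_{-}$, which is what licenses the algebraic Künneth theorem with twisted coefficients and reduces everything to the two one-variable computations over $\Z2$; all the extension problems in the Künneth sequences are vacuous because in each degree one of the two outer terms is zero, so no splitting arguments are required.
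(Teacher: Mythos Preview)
Your proof is correct and follows essentially the same route as the paper: both equalities are obtained from the K\"unneth short exact sequence, and for the twisted case you make exactly the identification the paper highlights---that $\mathbb{Z}_\alpha$ restricted to $\Z2\times\Z2$ is the tensor square of the sign coefficients on a single factor $\P^\infty$. The paper merely sketches this (pointing to Hatcher for the untwisted ring structure as an alternative route to the first equality), while you have filled in the bookkeeping carefully.
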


Here are some brief comments on the proofs of 
Lemmas~\ref{wellknown}--\ref{kunneth}.
Of course, the ring structure $H^*(\P^\infty\times\P^\infty;\F2)=
\F2[x_1,y_1]$ is standard (as in Theorem~\ref{modulestructure}, subscripts 
for the cohomology classes in this paragraph indicate dimension).
On the other hand, it is easily shown~(see for instance~\cite[Example~3E.5 
on pages~306--307]{hatcher}) that $H^*(\P^\infty\times\P^\infty)$
is the polynomial ring over the integers on three classes $x_2$, $y_2$,
and $z_3$ subject to the four relations
\begin{equation}\label{relacionesenteras}
2x_2=0,\;\; 2y_2=0,\;\; 2z_3=0,\;\mbox{and}\;\; z_3^2=x_2y_2(x_2+y_2).
\end{equation} 
These two facts yield Lemma~\ref{wellknown} and
the first equality in Lemma~\ref{kunneth}. 
Lemma~\ref{initialF} can be proved with the argument given for
Corollary~\ref{handel68B}---replacing $D_8$ by its subgroup 
$\Z2\times\Z2$ in~(\ref{wreath}).
Finally, both equalities in Lemma~\ref{kunneth} can be obtained as immediate
consequences of the K\"unneth exact sequence (for the second 
equality, note that $\mathbb{Z}_\alpha$ arises
as the tensor square of the standard twisted coefficients for a single 
factor $\P^\infty$). 

\begin{nota}\label{mapdereduccion}{\em
For future reference we recall (again from Hatcher's 
book) that the mod 2 reduction map $H^*(\P^\infty\times\P^\infty)\to 
H^*(\P^\infty\times\P^\infty;\F2)$, a monomorphism in positive dimensions,
is characterized by $x_2\mapsto x_1^2$,
$y_2\mapsto y_1^2$, and $z_3\mapsto x_1y_1(x_1+y_1)$.
}\end{nota}

\begin{nota}\label{efectom}{\em
Here are the promised details about the factorization of~(\ref{classify})
through $BD_8$. We already noticed that the claimed 
factorization~(\ref{classifydescompuesta}) is proved 
in~\cite{handel68}---for $m\geq 3$, but the restriction can be removed by 
naturality---where 
$q\colon BD_8\to\P^\infty$ corresponds to the class $x\in H^1(BD_8;\F2)$ at 
the beginning of the section. On the other hand, the 
extension~(\ref{wreath}) defines a fibration
$$\P^\infty\times\P^\infty\stackrel{\iota}\longrightarrow 
BD_8\stackrel{q'}\longrightarrow\P^\infty,$$
and Handel's proof of~\cite[Proposition~3.5]{handel68} characterizes
$x$ as the only nonzero element in $H^1(BD_8;\F2)$ 
mapping trivially under the fiber inclusion $\iota$. Thus, in fact $q=q'$.
In particular, the map induced by $q$ in 
integral cohomology can be computed 
in purely algebraic terms, using the projection in~(\ref{wreath}).
Actually, since $H^*(\P^\infty)=\mathbb{Z}[z]\left/(2z)\right.$ 
with $z\in H^2(\P^\infty)$, $q^*$ is determined by its 
value on $z$. As the reader can easily verify,
a simple exercise using the well-known resolution of
the (trivial) $D_8$-module $\mathbb{Z}$ (see for instance~\cite{handeltohoku})
shows that generators in Theorem~\ref{modulestructure} can be chosen 
so that $q^*(z)=\nu_2$.
}\end{nota}

\section{Orientability properties of some quotients of $V_{n,2}$}
\label{orientability}

Proofs in this section will be postponed 
until all relevant results have been presented. 
Recall that all Stiefel manifolds $V_{n,2}$ are orientable (actually 
parallelizable, cf.~\cite{sutherland}).
Even if some of the elements of a given subgroup $H$ 
of $\mathrm{O}(2)$ fail
to act on $V_{n,2}$ in an orientation-preserving way, we could still use the
possible orientability of the quotients $V_{n,2}/H$ as an indication
of the extent to which $H$, 
as a whole, is compatible with the orientability 
of the several $V_{n,2}$. 
For example, while every element of $\mathrm{SO}(2)$ 
gives an orientation-preserving diffeomorphism on each $V_{n,2}$, 
it is well known that the Grassmannian $V_{n,2}/\mathrm{O}(2)$ 
of unoriented 2-planes in $\mathbb{R}^n$
is orientable if and only if $n$ is even (see for instance 
\cite[Example~47 on page~162]{prasolov}). 
We show that a similar---but {\it shifted}---result holds 
when $\mathrm{O}(2)$ is replaced by $D_8$.

\begin{notacion}\label{loscocientes}{\em
For a subgroup $H$ of $\mathrm{O}(2)$,
we will use the shorthand $V_{n,H}$ to denote the quotient 
$V_{n,2}/H$. For instance $V_{m+1,G}=E_{m,G}$, the space
in Notation~\ref{convenience}.
}\end{notacion}

\begin{proposicion}\label{orientabilidadB}
For $n>2$, $V_{n,D_8}$ is orientable if and only if $n$ is odd. 
Consequently, for $m>1$, the top dimensional cohomology group of 
$B(\P^m,2)$ is $$H^{2m-1}(B(\P^m,2))=\begin{cases}
\mathbb{Z}, & \mbox{{for even $\,m$}};\\\Z2, & 
\mbox{for odd $\,m$.}
\end{cases}$$
\end{proposicion}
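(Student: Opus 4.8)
The plan is to determine the orientability of $V_{n,D_8}$ by analyzing the first Stiefel--Whitney class $w_1$ of this closed manifold, or equivalently the orientation character $\pi_1(V_{n,D_8})\to\Z2$. Since $V_{n,2}$ is $(n-2)$-connected, for $n>2$ it is simply connected, and the covering $V_{n,2}\to V_{n,D_8}$ identifies $\pi_1(V_{n,D_8})$ with $D_8$. Thus the orientability of $V_{n,D_8}$ is governed by a single homomorphism $D_8\to\Z2$ recording which deck transformations reverse orientation on $V_{n,2}$. First I would compute, for each of the generators $\rho,\rho_1,\rho_2$, whether the corresponding diffeomorphism of $V_{n,2}$ preserves or reverses orientation; by multiplicativity of the determinant this pins down the character on all of $D_8$, and $V_{n,D_8}$ is orientable precisely when the character is trivial.

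\textbf{Key steps.} The degree of the diffeomorphism induced by a group element can be read off from its action on the top (or any suitable) cohomology of $V_{n,2}$. Concretely, $V_{n,2}$ is the unit sphere bundle of the tangent bundle of $S^{n-1}$; its cohomology is that of a product $S^{n-2}\times S^{n-1}$ when $n$ is even and has a $\mathbb{Z}/2$ in degree $n-2$ when $n$ is odd, so I would instead argue more robustly via the $D_8$-action on the orientation bundle, i.e. on $H_{2n-3}(V_{n,2})\cong\mathbb{Z}$. The swap $\rho(v_1,v_2)=(v_2,v_1)$ reverses orientation iff the analogous swap on the oriented frame bundle does, which one computes to contribute a sign depending on the parity of $n$ (this is exactly the source of the ``shift'' relative to the classical $V_{n,2}/\mathrm{O}(2)$ statement, where the analysis involves $\mathrm{O}(2)$ rather than its finite subgroup and yields orientability iff $n$ is even). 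The reflections $\rho_1(v_1,v_2)=(-v_1,v_2)$ and $\rho_2(v_1,v_2)=(v_1,-v_2)$ each replace one frame vector by its negative; I would compute the sign of each such antipodal-type map on $V_{n,2}$ — each is conjugate in $\mathrm{O}(2)$ to the other, so they carry the same sign, and that sign is independent of $n$ for dimensional reasons. Assembling: the product of the three signs, as a function of $n\bmod 2$, must vanish exactly when $n$ is odd; I would verify this by direct computation in the two parities, using that $V_{3,2}\cong\mathrm{SO}(3)\cong\P^3$ as a sanity check at the smallest odd case and a small even case to fix the constant.

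\textbf{From orientability to the top cohomology of $B(\P^m,2)$.} Once $V_{n,D_8}$ is known to be orientable iff $n$ is odd, I would apply this with $n=m+1$ and invoke Lemma~\ref{inicio2Handel}, which identifies $E_{m,D_8}=V_{m+1,D_8}$ with a strong deformation retract of $B(\P^m,2)$. Since $B(\P^m,2)$ is a closed (connected) $(2m-1)$-manifold, Poincar\'e duality gives $H^{2m-1}(B(\P^m,2))\cong H_0(B(\P^m,2);\mathbb{Z}_{w_1})$, which is $\mathbb{Z}$ in the orientable case and $\Z2$ in the non-orientable case. For $m>1$ even, $n=m+1$ is odd, so $V_{m+1,D_8}$ is orientable and $H^{2m-1}=\mathbb{Z}$; for $m$ odd, $n=m+1$ is even, so the quotient is non-orientable and $H^{2m-1}=\Z2$. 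This is consistent with the $i=4n-1$ entries in Theorem~\ref{descripciondesaordenada}.

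\textbf{Main obstacle.} The delicate point is the sign computation for the swap $\rho$: keeping careful track of how interchanging the two ordered frame vectors acts on the orientation of $V_{n,2}$ — including the interaction between permuting the two $S^{n-1}$-like ``directions'' and the induced reorientation of the fiber — is where the parity dependence (and hence the ``shift'' phenomenon advertised before the proposition) enters, and it is easy to be off by a sign. I would hedge this by cross-checking against the known classical result for $V_{n,2}/\mathrm{O}(2)$ and against the small cases $n=3,4$, and, if convenient, by instead running the orientation character through the Cartan--Leray / Serre spectral sequence of $V_{m+1,2}\to E_{m,D_8}\to BD_8$ and matching the local system against $\mathbb{Z}_\alpha$ from Theorem~\ref{modulestructure}.
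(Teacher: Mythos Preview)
Your overall strategy---identify $\pi_1(V_{n,D_8})\cong D_8$ for $n>2$ and compute the orientation character by reading off the degree of each generator on $H^{2n-3}(V_{n,2})\cong\mathbb{Z}$---is exactly the paper's approach. But your sketch contains inaccuracies in the sign analysis that would derail the computation, and it misses the simplification that makes the paper's argument clean.

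The paper observes (first line of the proof of Theorem~\ref{D8actionV}) that the product of any two of $\rho,\rho_1,\rho_2$ lies in the path-connected group $\mathrm{SO}(2)$ and hence acts homotopically trivially on $V_{n,2}$. Consequently $\rho^*=\rho_1^*=\rho_2^*$ on all of $H^*(V_{n,2})$: the three generators carry the \emph{same} sign on top cohomology, and that common sign depends on the parity of $n$. Your claim that the sign for $\rho_1,\rho_2$ is ``independent of $n$ for dimensional reasons'' is therefore wrong---for instance $\rho_2$ covers the identity on the base $S^{n-1}$ of the sphere bundle $S^{n-2}\to V_{n,2}\to S^{n-1}$ and restricts to the antipodal map on the fiber $S^{n-2}$, giving $\deg(\rho_2)=(-1)^{n-1}$. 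Once one sees that all three signs coincide, orientability reduces to computing a single sign (the paper does this via Theorem~\ref{D8actionV}, with Remark~\ref{transfer} offering an even shorter Euler-characteristic argument for odd $n$). Note also that the criterion for orientability is that \emph{every} generator has sign $+1$; ``the product of the three signs must vanish'' is not the right test, and with all three signs equal it would give the wrong answer anyway.

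A smaller slip: $B(\P^m,2)$ is an open $2m$-manifold, not a closed $(2m-1)$-manifold. The closed $(2m-1)$-manifold to which one applies the top-cohomology criterion (Hatcher's Corollary~3.28 plus the UCT, as in the paper) is the deformation retract $E_m=V_{m+1,D_8}$ itself; you invoke Lemma~\ref{inicio2Handel} correctly but then misattribute the manifold structure.
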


\begin{nota}\label{analogousversionsF}{\em
Proposition~\ref{orientabilidadB} holds (with the same proof) if $D_8$ is 
replaced by its subgroup $\Z2\times\Z2$, and
$\bpm$ is replaced by $\fpm$.
It is interesting to compare both versions of
Proposition~\ref{orientabilidadB} with the fact that, for $m>1$, 
$\bpm$ is non-orientable, while $\fpm$ is orientable only for odd 
$m$~(\cite[Lemma~2.6]{SadokCohenFest}). 
}\end{nota}

\begin{ejemplo}\label{V22}{\em
The cases with $n=2$ and $m=1$ in Proposition~\ref{orientabilidadB} 
are special (compare 
to~\cite[Proposition~2.5]{SadokCohenFest}):
Since the 
quotient of $V_{2,2}=S^1\cup S^1$ by the action of 
$D_8\cap\mathrm{SO}(2)$ is diffeomorphic to 
the disjoint union of two copies of $S^1/\mathbb{Z}_4$, 
we see that $V_{2,D_8}\cong S^1$.
}\end{ejemplo}

If we take the same orientation for both circles
in $V_{2,2}=S^1\cup S^1$, it is clear that
the automorphism $H^1(V_{2,2})\to H^1(V_{2,2})$ 
induced by an element $r\in D_8$ 
is represented by the matrix
$\left(\begin{smallmatrix}0&1\\1&0\end{smallmatrix}\right)$
if $r\in \mathrm{SO}(2)$, but by the matrix 
$\left(\begin{smallmatrix}0&-1\\-1&0\end{smallmatrix}\right)$
if $r\not\in \mathrm{SO}(2)$. For larger values of $n$,
the method of proof of Proposition~\ref{orientabilidadB} 
allows us to describe the action of 
$D_8$ on the integral cohomology ring of $V_{n,2}$.
The answer is given in terms of the generators $\rho,\rho_1,\rho_2\in D_8$ 
introduced in Definition~\ref{inicio1Handel}.

\begin{teorema}\label{D8actionV}
The three automorphisms 
$\rho^*\!,\rho_1^*,\rho_2^*\colon H^q(V_{n,2})\to
H^q(V_{n,2})$ agree.
For $n>2$, this common morphism is the identity 
except when $n$ is even and $q\in\{n-2,2n-3\}$, 
in which case the common morphism is multiplication by $-1$.
\end{teorema}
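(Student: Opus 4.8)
The plan is to establish the two assertions separately: the coincidence $\rho^*=\rho_1^*=\rho_2^*$ is soft, while the identification of the common morphism requires a spectral-sequence computation in the tangent sphere bundle of $S^{n-1}$.

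For the first assertion, note that every element of $\mathrm{SO}(2)$ acts on $V_{n,2}$ and, $\mathrm{SO}(2)$ being path-connected, each such action is isotopic to the identity and therefore induces the identity on $H^*(V_{n,2})$. By Definition~\ref{inicio1Handel}, $\rho\rho_1$ generates $D_8\cap\mathrm{SO}(2)=\mathbb{Z}_4$, and its square is the half-turn $(v_1,v_2)\mapsto(-v_1,-v_2)$, i.e.\ $\rho_1\rho_2$; so both $\rho\rho_1$ and $\rho_1\rho_2$ act trivially on cohomology. Passing to cohomology (contravariantly) this reads $\rho_1^*\rho^*=\mathrm{id}=\rho_1^*\rho_2^*$, hence $\rho^*=\rho_2^*=(\rho_1^*)^{-1}$; and since $\rho_1^2=1$ we get $(\rho_1^*)^2=\mathrm{id}$, so $(\rho_1^*)^{-1}=\rho_1^*$ and the three automorphisms agree.

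To pin down the common automorphism I would use the fibration $p_1\colon V_{n,2}\to S^{n-1}$, $p_1(v_1,v_2)=v_1$, with fibre $F_u=p_1^{-1}(u)=S(T_uS^{n-1})$: this is the sphere bundle of the orientable bundle $TS^{n-1}$, whose Euler class equals $\chi(S^{n-1})$ times a generator, i.e.\ $0$ for $n$ even and $\pm2$ for $n$ odd. The attendant Gysin (or Serre) spectral sequence then gives that, for $n>2$, $H^*(V_{n,2})$ is concentrated in the four distinct degrees $0,\,n-2,\,n-1,\,2n-3$, equal to $\mathbb{Z}$ in each when $n$ is even, and to $\mathbb{Z}$ in degrees $0,\,2n-3$ and $\mathbb{Z}_2$ in degree $n-1$ when $n$ is odd, with no extension ambiguities. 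The action on $H^0$ is trivial, and for $n$ odd the only group below the top is a $\mathbb{Z}_2$, on which the action is forced to be trivial; so for $n$ odd everything reduces to the top degree $2n-3$.

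For the remaining degrees I would invoke naturality of this spectral sequence under $\rho_1$, which covers the antipodal map $a$ of $S^{n-1}$ (namely $p_1\rho_1=ap_1$). For $n$ even the sequence collapses, so $p_1^*$ identifies $H^{n-1}(V_{n,2})$ with $H^{n-1}(S^{n-1})$, on which $\rho_1^*=a^*=(-1)^n\,\mathrm{id}=\mathrm{id}$ (as $n$ is even); and restriction to a fibre identifies $H^{n-2}(V_{n,2})$ with $H^{n-2}(F_u)$, so here $\rho_1^*$ is read off from how $\rho_1$ maps $F_u$ to $F_{-u}$, namely by the tautological linear identification $u^{\perp}=(-u)^{\perp}$ — which reverses the orientations the fibres inherit from $S^{n-1}$ (the outward normal flips sign), hence has degree $-1$, so $\rho_1^*=-1$ in degree $n-2$. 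In the top degree $H^{2n-3}(V_{n,2})$, for $n$ even one combines these via the cup-product identification $H^{2n-3}\cong H^{n-2}\otimes H^{n-1}$ (from the collapse) to get $\rho_1^*=(-1)\cdot(+1)=-1$, while for $n$ odd one uses $\rho_2$ instead (equal to $\rho_1$ on cohomology by the first part): $\rho_2$ covers $\mathrm{id}_{S^{n-1}}$ and restricts on each fibre to the antipodal map of degree $(-1)^{n-1}=+1$, hence acts trivially on $E_2$ and so on all of $H^*(V_{n,2})$. Cross-checking with $\rho_2$ in the even case (it covers $\mathrm{id}_{S^{n-1}}$ and acts fibrewise by the antipodal map of degree $(-1)^{n-1}=-1$) produces the same signs, and assembling everything gives precisely the asserted morphism. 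I expect the one genuinely delicate point to be this orientation bookkeeping on the fibre $S^{n-2}$ — checking that the identification $u^{\perp}=(-u)^{\perp}$ reverses the $S^{n-1}$-induced orientations, so that $\rho_1$ acts on the fibre with degree $-1$ — together with the bookkeeping needed to be sure the edge homomorphisms used above are isomorphisms, which is where the vanishing of the Euler class for even $n$ enters.
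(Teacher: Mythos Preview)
Your proof is correct. The first assertion and the even-$n$ case (via your cross-check with $\rho_2$, which covers the identity on $S^{n-1}$ and is the fibrewise antipodal map) are exactly the paper's argument.

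For odd $n$ you take a genuinely different and more elementary route. The paper switches to the circle fibration $V_{n,2}\to V_{n,\mathrm{SO}(2)}\to\mathbb{C}\mathrm{P}^\infty$, analyzes its Serre spectral sequence, and reads off $\rho^*$ on $H^{2n-3}$ from the pull-back square comparing $\rho$ with conjugation on $\mathbb{C}\mathrm{P}^\infty$. You instead stay with the tangent sphere bundle $S^{n-2}\to V_{n,2}\to S^{n-1}$ and simply observe that $\rho_2$ covers the identity on the base and is the antipodal map of degree $(-1)^{n-1}=+1$ on each fibre, hence acts trivially on $E_2$ and therefore on $H^{2n-3}(V_{n,2})\cong E_\infty^{\,n-1,n-2}$. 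This is cleaner and requires no new fibration; the paper explicitly acknowledges that its argument for odd $n$ can be simplified (it mentions a rational/transfer shortcut in Remark~\ref{transfer}) but chose the $\mathbb{C}\mathrm{P}^\infty$ route because it yields, as a byproduct, the ring structure of $H^*(V_{n,\mathrm{SO}(2)})$ in Proposition~\ref{SSOGimp}. Your primary $\rho_1$ argument for even $n$ is also valid once the orientation bookkeeping is done (the identity $u^\perp=(-u)^\perp$ does reverse the outward-normal orientations, independently of $n$), but since your $\rho_2$ cross-check already settles that case, the $\rho_1$ argument is not needed.
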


Theorem~\ref{D8actionV} should be read keeping in mind the well-known 
cohomology ring $H^*(V_{n,2})$. We recall its simple description 
after proving Proposition~\ref{orientabilidadB}. For the time being it
suffices to recall, for the purposes of Proposition~\ref{SSOGimp} 
below, that $H^{n-1}(V_{n,2})=\Z2$ for odd $n$, $n\geq3$.

\medskip
We use our approach to Theorem~\ref{D8actionV}
in order to describe the integral cohomology ring of the 
oriented Grassmannian $V_{n,\mathrm{SO}(2)}$ for odd $n$, $n\geq3$.
Although the result might be well known ($V_{n,\mathrm{SO}(2)}$ is 
a complex quadric of complex dimension $n-2$), we include the details 
(an easy step from the constructions in this section) 
since we have not been able to find an explicit reference 
in the literature.

\begin{proposicion}\label{SSOGimp} 
Assume $n$ is odd, $n=2a+1$ with $a\geq1$. Let $\widetilde{z}\in 
H^2(V_{n,\mathrm{SO}(2)})$ stand for the Euler class of the 
smooth principal $S^1$-bundle
\begin{equation}\label{smoothfibrfix}
S^1\to V_{n,2}\to V_{n,\mathrm{SO}(2)}
\end{equation}
There is a class $\widetilde{x}\in 
H^{n-1}(V_{n,\mathrm{SO}(2)})$ mapping under the projection 
in~{\em(\ref{smoothfibrfix})} to the nontrivial element in $H^{n-1}(V_{n,2})$. 
Furthermore, as a ring, $H^*(V_{n,\mathrm{SO}(2)})=\mathbb{Z}
[\widetilde{x},\widetilde{z\hspace{.5mm}}]
\hspace{.5mm}/I_n$ where  $I_n$ is the ideal 
generated by
\begin{equation}\label{tresrelaciones}
\widetilde{x}^{\,2},\mbox{ \ }
{\widetilde{x}\,\widetilde{z}^{\,a}}, \mbox{ \ and \ \ }
\widetilde{z}^{\,a}-2\hspace{.4mm}\widetilde{x}.
\end{equation}
\end{proposicion}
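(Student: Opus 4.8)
The plan is to compute $H^*(V_{n,\mathrm{SO}(2)})$ by running the Gysin sequence of the principal $S^1$-bundle~(\ref{smoothfibrfix}), exploiting the fact that $V_{n,\mathrm{SO}(2)}$ is the orbit space by $\mathrm{SO}(2)=D_8\cap\mathrm{SO}(2)$ extended to the full circle, together with the known structure of $H^*(V_{n,2})$. First I would recall the ring $H^*(V_{n,2})$ for $n=2a+1$ odd: it is an exterior-type algebra with a $1$-dimensional generator in degree $n-1=2a$ reduced mod~$2$ (indeed $H^{n-1}(V_{n,2})=\Z2$ as recalled just before Proposition~\ref{SSOGimp}) and a free generator in the top degree $2n-3=4a-1$, with the degree-$(n-1)$ class $u$ satisfying $2u=0$ and the product of the two generators giving the top class up to the order-$2$ ambiguity. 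The Euler class $\widetilde z\in H^2(V_{n,\mathrm{SO}(2)})$ is pulled back to $0$ in $V_{n,2}$ (it is the Euler class of the bundle we are quotienting by), so the Gysin sequence breaks into short exact pieces $0\to H^{q-2}(V_{n,\mathrm{SO}(2)})\xrightarrow{\cdot\widetilde z} H^{q}(V_{n,\mathrm{SO}(2)})\xrightarrow{\pi^*} H^{q}(V_{n,2})\to 0$ away from the degrees where the fiber cohomology contributes, i.e.\ the integral cohomology of the base is built up from $\mathbb Z[\widetilde z]$ by attaching a class $\widetilde x$ in degree $n-1$ lifting $u$.

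The key steps, in order, are: (1) identify $H^0=\mathbb Z$, and use $\pi^*$ together with multiplication by $\widetilde z$ to see that in degrees $0,2,4,\dots$ below $n-1$ the groups are cyclic generated by powers of $\widetilde z$, and in odd degrees below $n-1$ they vanish, since $H^{\mathrm{odd}<n-1}(V_{n,2})=0$; (2) at degree $n-1=2a$ the Gysin sequence forces a class $\widetilde x$ mapping to $u$, and comparing orders (the source $H^{n-3}(V_{n,\mathrm{SO}(2)})=\mathbb Z\langle\widetilde z^{\,a-1}\rangle$ is infinite cyclic while $H^{n-1}(V_{n,2})=\Z2$) shows $\widetilde z^{\,a}$ and $2\widetilde x$ generate the same subgroup — this is the relation $\widetilde z^{\,a}-2\widetilde x$; (3) the relation $\widetilde x^{\,2}=0$ follows because $\widetilde x^{\,2}$ lives in degree $2n-2=4a>4a-1$, which is above the top nonzero degree once we know (from Proposition~\ref{orientabilidadB}, its $\mathrm{SO}(2)$/even-$n$ analogue, or from the complex-quadric description) that $V_{n,\mathrm{SO}(2)}$ has dimension $2n-4$; (4) the relation $\widetilde x\,\widetilde z^{\,a}=0$ then follows from (2) and (3), since $\widetilde x\,\widetilde z^{\,a}=2\widetilde x^{\,2}=0$; (5) finally one checks there are no further generators or relations by a dimension count: $\mathbb Z[\widetilde x,\widetilde z]/I_n$ has exactly one $\mathbb Z$ in each even degree $0,2,\dots,2n-4$ and one $\mathbb Z$ in degree $n-1$ modulo the identification, matching the Gysin output, with the single $2$-torsion-free top class in degree $2n-3$ realized as $\widetilde x\,\widetilde z^{\,a-1}$. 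I would present (2) carefully since the extension $0\to\mathbb Z\to H^{n-1}\to\Z2\to 0$ could a priori be either $\mathbb Z$ or $\mathbb Z\oplus\Z2$; the multiplicative relation $\widetilde z^{\,a}=2\widetilde x$ is exactly what pins it down to $\mathbb Z$.

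The main obstacle I expect is step~(2): extracting the \emph{multiplicative} relation $\widetilde z^{\,a}=2\widetilde x$ rather than merely the additive extension. Additively the Gysin sequence tells us $H^{n-1}(V_{n,\mathrm{SO}(2)})$ sits in $0\to\mathbb Z\langle\widetilde z^{\,a-1}\rangle\xrightarrow{\cdot\widetilde z}H^{n-1}\to\Z2\to 0$, but to say the injected class is $2$ times the chosen lift $\widetilde x$ of $u$ requires knowing that $H^{n-1}$ is torsion-free (equivalently that $\widetilde z^{\,a}$ is divisible by $2$ in it). I would settle this either by comparing with the known complex-quadric structure of $V_{n,\mathrm{SO}(2)}$ (where the hyperplane class squares appropriately), or, to keep the argument self-contained within the methods of the paper, by feeding the transgression information from the mod~$2$ and integral Serre spectral sequences of $V_{n,2}\to V_{n,\mathrm{SO}(2)}\to BSO(2)=\P^\infty_{\mathbb C}$ (analogous to the argument used for Corollary~\ref{handel68B}) to see that no $2$-torsion survives in degree $n-1$ of the base. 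Once that is in hand, the remaining relations and the dimension count are routine.
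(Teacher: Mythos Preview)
Your approach via the Gysin sequence of the circle bundle $S^1\to V_{n,2}\to V_{n,\mathrm{SO}(2)}$ is a genuinely different route from the paper's, which instead runs the Serre spectral sequence of the classifying fibration $V_{n,2}\to V_{n,\mathrm{SO}(2)}\to\mathbb{C}\P^\infty$. Both are valid, and in fact both resolve the crucial extension at degree $n-1$ by the same mechanism---but you have not noticed that this mechanism is already available within your Gysin framework. Since $H^q(V_{n,2})=0$ for $n-1<q<2n-3$, the Gysin sequence shows that cup product with $\widetilde z$ is an isomorphism $H^{q-2}(V_{n,\mathrm{SO}(2)})\to H^q(V_{n,\mathrm{SO}(2)})$ for each even $q$ with $n+1\le q\le 2n-4$; iterating, $H^{n-1}(V_{n,\mathrm{SO}(2)})\cong H^{2n-4}(V_{n,\mathrm{SO}(2)})$. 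Orientability of the closed manifold $V_{n,\mathrm{SO}(2)}$ then gives $H^{2n-4}=\mathbb{Z}$, hence $H^{n-1}$ is torsion-free and the extension is the nontrivial one, yielding $\widetilde z^{\,a}=2\widetilde x$. This is exactly the paper's argument (``multiplication by $z$ is monic \ldots\ the $5$-Lemma \ldots\ forces the corresponding nontrivial extensions''), transcribed into Gysin language; you do not need to import the complex-quadric description or fall back on the paper's spectral sequence. What the paper's SSS buys in addition is the explicit identification of the two families of differentials $d_{n-1}(y)=xz^a$ and $d_{2n-2}(2y)=z^{n-1}$, the second of which is used in its proof of Theorem~\ref{D8actionV}; your Gysin argument is more economical if one only wants the ring structure.

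Two small slips to clean up. In step~(5) you place the top class $\widetilde x\,\widetilde z^{\,a-1}$ in degree $2n-3$, but its degree is $(n-1)+2(a-1)=2n-4$, which is the dimension of $V_{n,\mathrm{SO}(2)}$; there are no odd-degree classes at all in this ring. And your opening description of $H^*(V_{n,2})$ for odd $n$ as having ``the product of the two generators giving the top class'' is incorrect: the degree-$(n-1)$ generator has order~$2$ and cannot divide the integral generator in degree $2n-3$.
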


It should be noted that the second generator of $I_n$ is superfluous. We 
include it in the description since it will become clear, from the proof of 
Proposition~\ref{SSOGimp}, that the first two terms in~(\ref{tresrelaciones})
correspond to the two families of differentials in the SSS of the fibration 
classifying~(\ref{smoothfibrfix}), 
while the last term corresponds to the family of 
nontrivial extensions in the resulting $E_\infty$-term.

\begin{nota}\label{compat1}{\em
It is illuminating to compare Proposition~\ref{SSOGimp} with
H.~F.~Lai's computation of the cohomology ring
$H^*(V_{n,\mathrm{SO}(2)})$ for even $n$, $n\geq4$. According 
to~\cite[Theorem~2]{lai}, $H^*(V_{2a,\mathrm{SO}(2)})=\mathbb{Z}
[\kappa,\widetilde{z\hspace{.5mm}}]\hspace{.5mm}/I_{2a}$ where 
$I_{2a}$ is the ideal generated by 
\begin{equation}\label{laisdescription}
\kappa^2-\varepsilon \kappa\widetilde{z}^{\,a-1}\mbox{ \ \ and \ \ }\,
\widetilde{z}^{\,a}-2\kappa\widetilde{z}.
\end{equation}
Here $\varepsilon=0$ for $a$ even, and $\varepsilon=1$ for $a$ odd, while
the generator $\kappa\in H^{2a-2}(V_{2a,\mathrm{SO}(2)})$ 
is the Poincar\'e dual
of the homology class represented by the canonical (realification) embedding 
$\mathbb{C}\P^{a-1}\hookrightarrow V_{2a,\mathrm{SO}(2)}$ (Lai also proves
that $(-1)^{a-1}\kappa \widetilde{z}^{\,a-1}$ is 
the top dimensional cohomology class in $V_{2a,\mathrm{SO}(2)}$
corresponding to the canonical orientation of this manifold). 
The first fact to observe in Lai's description of 
$H^*(V_{2a,\mathrm{SO}(2)})$
is that the two dimensionally forced relations $\kappa\widetilde{z}^{\,a}=0$ 
and $\widetilde{z}^{\,2a-1}=0$ can be algebraically 
deduced from the relations implied by~(\ref{laisdescription}). A similar 
situation holds for $H^*(V_{2a+1,\mathrm{SO}(2)})$, where 
the first two relations in~(\ref{tresrelaciones}), as well as the 
corresponding algebraically implied relation $\widetilde{z}^{\,2a}=0$, 
are forced by dimensional considerations. But it is more interesting
to compare Lai's result with Proposition~\ref{SSOGimp} 
through the canonical inclusions $\iota_n\colon
V_{n,\mathrm{SO}(2)}\hookrightarrow
V_{n+1,\mathrm{SO}(2)}$ ($n\geq3$). In fact, the 
relations given by the last element both in~(\ref{tresrelaciones}) 
and~(\ref{laisdescription}) readily give 
\begin{equation}\label{compatibilidadxk}
\iota_{2a}^*(\widetilde{x})=\kappa\widetilde{z}\mbox{ \ \ and \ \ }
\iota^*_{2a+1}(\kappa)=\widetilde{x}
\end{equation}
for $a\geq2$. Note that the second equality in~(\ref{compatibilidadxk})
can be proved, for all $a\geq1$, with the following alternative
argument: From~\cite[Theorem~2]{lai}, $2\kappa-\widetilde{z}^{\,a}\in
V_{2a+2,\mathrm{SO}(2)}$ is the Euler class of the canonical
{\it normal} bundle of $V_{2a+2,\mathrm{SO}(2)}$ and, therefore, maps 
trivially under $\iota_{2a+1}^*$. The second equality 
in~(\ref{compatibilidadxk}) then follows from
the relation implied by the last element in~(\ref{tresrelaciones}).
Needless to say, the usual cohomology ring $H^*(B\mathrm{SO}(2))$ 
is recovered as the inverse limit of the maps $\iota_n^*$
(of course $B\mathrm{SO}(2)\simeq\mathbb{C\P^\infty}$).
}\end{nota}

\begin{proof}[Proof of Proposition~{\em\ref{orientabilidadB}}
from Theorem~{\em\ref{D8actionV}}]
Since the action of every element in $D_8\cap\mathrm{SO}(2)$ preserves
orientation in $V_{n,2}$, and since two elements in 
$D_8-\mathrm{SO}(2)$ must
``differ'' by an orientation-preserving element in $D_8$,
the first assertion in Proposition~\ref{orientabilidadB} 
will follow once we argue that (say) $\rho$
is orientation-preserving precisely when $n$ is odd. But such a fact
is given by Theorem~\ref{D8actionV} in view of the UCT. 
The second assertion in Proposition~\ref{orientabilidadB} 
then follows from 
Lemma~\ref{inicio2Handel},~\cite[Corollary~3.28]{hatcher}, and the 
UCT (recall $\dim(V_{n,2})=2n-3$).
\end{proof}

We now start working toward the proof of Theorem~\ref{D8actionV}, 
recalling in particular the cohomology ring $H^*(V_{n,2})$.
Let $n>2$ and think of $V_{n,2}$ as the sphere bundle of the 
tangent bundle of $S^{n-1}$. The (integral cohomology) SSS for the fibration 
$S^{n-2}\stackrel{\iota}\to V_{n,2}\stackrel{\pi}\to S^{n-1}$ 
(where $\pi(v_1,v_2)=v_1$ and $\iota(w)=(e_1,(0,w))$ with 
$e_1=(1,0,\ldots,0)$) starts as
\begin{equation}\label{SSS}
{E}^{p,q}_2=\begin{cases}
\mathbb{Z}, & (p,q)\in\{(0,0),(n-1,0),(0,n-2),(n-1,n-2)\};
\\0, & \mbox{otherwise;}\end{cases}
\end{equation}
and the only possibly nonzero differential is multiplication by the Euler 
characteristic of $S^{{n-1}}$ (see for 
instance~\cite[pages 153--154]{mccleary}). 
At any rate, the only possibilities for a 
nonzero cohomology group $H^q(V_{n,2})$ are $\Z2$ or $\mathbb{Z}$. 
In the former case, 
any automorphism must be the identity. So the real task is to
determine the action of the three elements in Theorem~\ref{D8actionV} 
on a cohomology group $H^q(V_{n,2})=\mathbb{Z}$.

\begin{proof}[Proof of Theorem~{\em\ref{D8actionV}}]
The fact that $\rho^*=\rho_1^*=\rho_2^*$ follows by observing that 
the product of any two of the elements $\rho$, $\rho_1$, and $\rho_2$ lies 
in the path connected group $\mathrm{SO}(2)$, and therefore determines
an automorphism $V_{n,2}\to V_{n,2}$ which is homotopic to the identity.

\medskip
The analysis of the second assertion of Theorem~\ref{D8actionV} 
depends on the parity of $n$.

\medskip\noindent
{\bf Case with $n$ even, $n>2$.} 
The SSS~(\ref{SSS}) collapses, giving that $H^*(V_{n,2})$
is an exterior algebra (over $\mathbb{Z}$) 
on a pair of generators $x_{n-2}$ and $x_{n-1}$ (indices
denote dimensions). The spectral sequence also gives that $x_{n-2}$ 
maps under $\iota^*$ 
to the generator in $S^{n-2}$, whereas $x_{n-1}$ is the image under $\pi^*$
of the generator in $S^{n-1}$. Now, the (obviously) commutative diagram

\begin{picture}(0,75)(-103,0)
\put(0,60){$S^{n-2}$}
\put(8,55){\vector(0,-1){15}}
\put(84,55){\vector(0,-1){15}}
\put(3,47){\scriptsize $\iota$}
\put(87,47){\scriptsize $\iota$}
\put(20,63){\vector(1,0){54}}
\put(27,66){\scriptsize antipodal map}
\put(43.5,36){\scriptsize $\rho_2$}
\put(20,33){\vector(1,0){54}}
\put(76,60){$S^{n-2}$}
\put(79,30){$V_{n,2}$}
\put(3,30){$V_{n,2}$}
\put(38,5){$S^{n-1}$}
\put(15,24){\vector(2,-1){22}}
\put(21,13.5){\scriptsize $\pi$}
\put(78,25){\vector(-2,-1){22}}
\put(68,14){\scriptsize $\pi$}
\end{picture}

%$$\xymatrix{S^{n-2}\ar[rr]^{\mathrm{antipodal\;\, map}}\ar[d]_{\iota}&&
%S^{n-2}\ar[d]_\iota\\
%V_{n,2}\ar[rd]_{\pi}\ar[rr]^{\rho_2}&&V_{n,2}\ar[ld]^{\pi}\\
%&S^{n-1}&}$$
\noindent 
implies that $\rho_2^*$ (and therefore $\rho_1^*$ and $\rho^*$) is the 
identity on $H^{n-1}(V_{n,2})$, and that $\rho_2^*$ (and therefore 
$\rho_1^*$ and $\rho^*$) act by multiplication by $-1$ on 
$H^{n-2}(V_{n,2})$.
The multiplicative structure then implies that the last assertion holds
also {on} $H^{2n-3}(V_{n,2})$. 

\medskip\noindent
{\bf Case with $n$ odd, $n>2$.} 
The description in~(\ref{SSS}) 
of the start of the SSS implies that the only nonzero
cohomology groups of $V_{n,2}$ are $H^{n-1}(V_{n,2})=\Z2$ and 
$H^i(V_{n,2})=\mathbb{Z}$ for $i=0,2n-3$. Thus,
we only need to make sure that 
\begin{equation}\label{mostrar}
\mbox{$\rho^*\colon H^{2n-3}(V_{n,2})
\to H^{2n-3}(V_{n,2})$ is the identity morphism.} 
\end{equation}
Choose generators
$x\in H^{n-1}(V_{n,2})$, $y\in H^{2n-3}(V_{n,2})$, and 
$z\in H^2(\mathbb{C}\P^\infty)$, and let 
$V_{n,\mathrm{SO}(2)}\to\mathbb{C}\P^\infty$ classify the circle 
fibration~(\ref{smoothfibrfix}).
Thus, the $E_2$-term of the SSS for the fibration
\begin{equation}\label{fibrationOG}
V_{n,2}\to V_{n,\mathrm{SO}(2)}\to\mathbb{C}\P^\infty
\end{equation}
takes the simple form

\begin{picture}(0,94)(-2,-14)
\put(0,0){\line(1,0){270}}
\put(0,0){\line(0,1){74}}
\multiput(-2,-2)(20,0){4}{$\mathbb{Z}$}
\multiput(118,-2)(20,0){3}{$\mathbb{Z}$}
\multiput(218,-2)(20,0){3}{$\mathbb{Z}$}
\multiput(-2,29)(20,0){4}{$\bullet$}
\multiput(118,29)(20,0){3}{$\bullet$}
\multiput(218,29)(20,0){3}{$\bullet$}
\multiput(-2,58)(20,0){4}{$\mathbb{Z}$}
\multiput(118,58)(20,0){3}{$\mathbb{Z}$}
\multiput(218,58)(20,0){3}{$\mathbb{Z}$}
\put(-1,-8){\scriptsize$1$}
\put(19,-8){\scriptsize$z$}
\put(39,-8){\scriptsize$z^2$}
\put(59,-8){\scriptsize$z^3$}
\put(117,-8){\scriptsize$z^{a-1}$}
\put(138,-8){\scriptsize$z^{a}$}
\put(157,-8){\scriptsize$z^{a+1}$}
\put(217,-8){\scriptsize$z^{n-2}$}
\put(237,-8){\scriptsize$z^{n-1}$}
\put(258,-8){\scriptsize$z^{n}$}
\put(272,-7){$\dots$}
\put(272,30){$\dots$}
\put(272,60){$\dots$}
\put(187,-7){$\dots$}
\put(187,30){$\dots$}
\put(187,60){$\dots$}
\put(87,-7){$\dots$}
\put(87,30){$\dots$}
\put(87,60){$\dots$}
\put(-9,29.5){\scriptsize$x$}
\put(-9,58){\scriptsize$y$}
\end{picture}

\noindent where $n=2a+1$, and a bullet represents a copy of $\Z2$.
The proof of Proposition~\ref{SSOGimp} below gives two rounds
of differentials, both originating on the top horizontal line;
the element $2y$ is a cycle in the first round of differentials, 
but determines the second round of differentials by 
\begin{equation}\label{differential}
d_{2n-2}(2y)=z^{n-1}.
\end{equation}
The key ingredient 
comes from the observation that $\rho$ and the
involution $\tau\colon V_{n,\mathrm{SO}(2)}
\to V_{n,\mathrm{SO}(2)}$ 
that reverses orientation of an oriented $2$-plane
fit into the pull-back diagram

\begin{equation}\label{conjugation}
%\begin{picture}(0,36)(-3.5,0)
%\put(-48,30){\xymatrix{V_{n,2}\ar[rr]^{\rho}\ar[d]&&V_{n,2}\ar[d]\\
%{{\widetilde{G}}_{n,2}}\ar[rr]^{\tau}\ar[d]&
%&{{\widetilde{G}}}_{n,2}\ar[d]\\
%\mathbb{C}\P^\infty\ar[rr]^c&&\mathbb{C}\P^\infty}}
%\end{picture}
\begin{picture}(0,36)(50,32)
\put(0,60){$V_{n,2}$}
\put(8,55){\vector(0,-1){14}}
\put(84,55){\vector(0,-1){14}}
\put(20,63){\vector(1,0){52}}
\put(45,66){\scriptsize $\rho$}
\put(45,36){\scriptsize $\tau$}
\put(45,6){\scriptsize $c$}
\put(24,33){\vector(1,0){46}}
\put(76,60){$V_{n,2}$}
\put(74,30){$V_{n,\mathrm{SO}(2)}$}
\put(-5,30){$V_{n,\mathrm{SO}(2)}$}
\put(2,0){$\mathbb{C}\P^\infty$}
\put(78,0){$\mathbb{C}\P^\infty$}
\put(24,3){\vector(1,0){50}}
\put(8,25){\vector(0,-1){14}}
\put(84,25){\vector(0,-1){14}}
\end{picture}
\end{equation}

\vspace{1.7cm}\noindent 
where $c$ stands for conjugation. [Indeed, thinking of 
$V_{n,\mathrm{SO}(2)}
\to \mathbb{C}\P^\infty$ as an inclusion, $\tau$ is the restriction of $c$, 
and $\rho$ becomes the equivalence induced on (selected) fibers.]
Of course $c^*(z)=-z$ in $H^2(\mathbb{C}\P^\infty)$, so that
\begin{equation}\label{par}c^*(z^{n-1})=z^{n-1}\end{equation}
(recall $n$ is odd). Thus, in terms of the map of spectral sequences
determined by~(\ref{conjugation}), 
conditions~(\ref{differential}) and~(\ref{par}) force the relation
$\rho^*(2y)=2y$. This gives~(\ref{mostrar}).
\end{proof}

The proof of~(\ref{mostrar}) we just gave (for odd $n$)
can be simplified by working over the rationals 
(see Remark~\ref{transfer} in the next paragraph). We have chosen the 
spectral sequence analysis of~(\ref{fibrationOG}) since it
leads us to Proposition~\ref{SSOGimp}.

\begin{nota}\label{transfer}{\em
It is well known that whenever a finite group $H$ acts freely 
on a space $X$, with $Y = X/H$, the rational cohomology of $Y$ maps 
isomorphically onto the $H$-invariant elements in the rational 
cohomology of $X$ (see for instance~\cite[Proposition~3G.1]{hatcher}). 
We apply this fact to the $8$-fold covering projection 
$\,\theta\colon V_{n,2}\to V_{n,D_8}$. Since the only nontrivial groups 
$H^q(V_{n,2};\mathbb{Q})$ are $\mathbb{Q}$ for $q=0,2n-3$ (this is where 
we use that $n$ is odd), we get that 
the rational cohomology of $V_{n,D_8}$ is $\mathbb{Q}$ in dimension $0$, 
vanishes in positive dimensions below $2n-3$, and is either $\mathbb{Q}$ 
or $0$ in the top dimension $2n-3$.  
But $V_{n,D_8}$ is a manifold of odd dimension, so its Euler 
characteristic is zero; this forces the top rational cohomology 
to be $\mathbb{Q}$. Thus, every element in $D_8$ acts as the identity 
on the top rational (and therefore integral) cohomology group
of $V_{n,2}$. This gives in particular~(\ref{mostrar}), the 
real content of Theorem~\ref{D8actionV} for an odd $n$.}
\end{nota}

As in the notation introduced right after~(\ref{mostrar}),
let $z\in H^2(\mathbb{C}\P^\infty)$ be a generator so that
the element $\widetilde{z}\in H^2(V_{n,\mathrm{SO}(2)})$ in 
Proposition~\ref{SSOGimp} is the image of $z$ under the projection 
map in~(\ref{fibrationOG}).

\begin{proof}[Proof of Proposition~{\em\ref{SSOGimp}}]
The $E_2$-term of the SSS for~(\ref{fibrationOG}) has been indicated 
in the proof of Theorem~\ref{D8actionV}. In that picture, 
the horizontal $x$-line consists of permanent cycles; indeed,
there is no nontrivial target in a $\mathbb{Z}$ group
for a differential originating at a $\Z2$ group.
Since $\dim(V_{n,\mathrm{SO}(2)})=2n-4$, the term $xz^a$ must be killed 
by a differential, and the only way this can happen is by means of
$d_{n-1}(y)=xz^a$. By multiplicativity, this settles a whole family of 
differentials killing off the elements $xz^i$ with $i\geq a$. Note that 
this still leaves groups $2\cdot\mathbb{Z}$ in the $y$-line (rather,
the $2y$-line). Just as before, dimensionality forces the 
differential~(\ref{differential}), and multiplicativity determines 
a corresponding family of differentials. What remains in the SSS after 
these two rounds of differentials---depicted below---consists of
permanent cycles, so the spectral sequence collapses from this point on.

\begin{picture}(0,53)(-24,-5)
\put(0,0){\line(1,0){242}}
\put(0,0){\line(0,1){40}}
\multiput(-2,-2)(20,0){4}{$\mathbb{Z}$}
\put(85,-8){$\ldots$}
\put(85,27){$\ldots$}
\put(187,-8){$\ldots$}
\multiput(118,-2)(20,0){3}{$\mathbb{Z}$}
\multiput(218,-2)(20,0){1}{$\mathbb{Z}$}
\multiput(-2,26)(20,0){4}{$\bullet$}
\multiput(118,26)(20,0){1}{$\bullet$}
\put(-1,-8){\scriptsize$1$}
\put(19,-8){\scriptsize$z$}
\put(39,-8){\scriptsize$z^2$}
\put(59,-8){\scriptsize$z^3$}
\put(117,-8){\scriptsize$z^{a-1}$}
\put(138,-8){\scriptsize$z^{a}$}
\put(157,-8){\scriptsize$z^{a+1}$}
\put(217,-8){\scriptsize$z^{n-2}$}
\put(-9,26){\scriptsize$x$}
\end{picture}

\bigskip\noindent Finally, we note that all possible extensions 
are nontrivial. Indeed, orientability of 
$\hspace{.5mm}V_{n,\mathrm{SO}(2)}$ gives  
$H^{2n-4}(V_{n,\mathrm{SO}(2)})=\mathbb{Z}$, which implies 
a nontrivial extension involving $xz^{a-1}$ and $z^{n-2}$. Since 
multiplication by $z$ is monic in total dimensions
less that $2n-4$ of the $E_\infty$-term, 
the $5$-Lemma (applied recursively) shows that
the same assertion is true in $H^{*}(V_{n,\mathrm{SO}(2)})$.
This forces the corresponding nontrivial 
extensions in degrees lower than 
$2n-4$: an element of order 2 in low dimensions would produce,
after multiplication by $z$, a corresponding element of order $2$ in the 
top dimension. The proposition follows.
\end{proof}

Lai's description of the ring $H^*(V_{2a,\mathrm{SO}(2)})$ 
given in Remark~\ref{compat1} can be used to understand 
the full patter of differentials and extensions in 
the SSS of~(\ref{fibrationOG}) for 
$n=2a\hspace{.5mm}$. Due to space limitations, details are not given 
here---but they are discussed in Remark~3.10 of the 
preliminary version~\cite{v1} of this paper.

\medskip
We close this section with an argument 
that explains, in a geometric way, the switch in parity of $n$ 
when comparing the orientability properties of $V_{n,\mathrm{O}(2)}$ 
to those of $V_{n,D_8}$. Let $\pi$ stand for the projection map in
the smooth fiber bundle~(\ref{smoothfibrfix}). The tangent bundle 
$T_{n,2}$ to $V_{n.2}$ decomposes as the Whitney sum $$T_{n,2}\cong\pi^*
(T_{n,\mathrm{SO}(2)})\oplus\lambda$$ where $T_{n,\mathrm{SO}(2)}$
is the tangent bundle to $V_{n,\mathrm{SO}(2)}$, and $\lambda$ 
is the $1$-dimensional bundle of tangents to the fibers---a trivial bundle 
since we have the nowhere vanishing vector field obtained by differentiating
the free action of $S^1$ on $V_{n.2}$. Note that $\rho\colon V_{n,2}\to V_{n,2}$ 
reverses orientation on all fibers and so reverses a given orientation 
of $\lambda$. Hence, $\rho\,$ {\it preserves\hspace{.5mm}} 
a chosen orientation of $T_{n,2}$
precisely when the involution $\tau$ in~(\ref{conjugation})
{\it reverses} a chosen orientation of $T_{n,\mathrm{SO}(2)}$. But,
as explained in the proof of Proposition~\ref{orientabilidadB}, 
$V_{n,D_8}$ is orientable precisely when 
$\rho$ is orientation-preserving. Likewise, $V_{n,\mathrm{O}(2)}$
is orientable precisely when $\tau$ is orientation-preserving.

\section{Torsion linking form and Theorems~\ref{descripcionordenada} 
and~\ref{descripciondesaordenada}}\label{linkinsection}
In this short section we outline an argument, based on the
classical torsion linking form, that allows us to compute the cohomology 
groups described by Theorems~\ref{descripcionordenada} 
and~\ref{descripciondesaordenada} in all but three critical dimensions.
The totality of dimensions (together with the proofs of 
Proposition~\ref{mono4} and Theorems~\ref{HFpar} and~\ref{HF1m4})
is considered in the next three sections---the first
two of which represent, together with the final 
Section~\ref{STCP67},  the bulk of spectral sequence 
computations in this paper.

\medskip
For a space $X$ let $TH_i(X;A)$ (respectively, $TH^i(X;A)$) denote 
the torsion subgroup of 
the $i^{\mathrm{th}}$ homology (respectively, cohomology) 
group of $X$ with (possibly twisted) 
coefficients $A$. As usual, omission of $A$ from the notation 
indicates that a simple system of 
$\mathbb{Z}$-coefficients is used. We are interested in the twisted 
coefficients $\widetilde{\mathbb{Z}}$ arising from the orientation character
of a closed $m$-manifold $X=M$ for, in such a case, there are non-singular
pairings 
\begin{equation}\label{linkingtorsion}
TH^i(M)\times TH^j(M;\widetilde{\mathbb{Z}})\to\mathbb{Q}/
\mathbb{Z}
\end{equation}
(for $i+j=m+1$), 
the so-called torsion linking forms, constructed from the UCT 
and Poincar\'e duality. Although~(\ref{linkingtorsion}) 
seems to be best known for an orientable $M$ (see for
instance~\cite[pages 16--17 
and 58--59]{collins}), the construction works just as well in a 
non-orientable setting.
We briefly recall the details (in cohomological terms) for completeness.

\medskip
Start by observing that for a finitely generated abelian group 
$H=F \oplus T$ with $F$ free abelian and $T$ a finite group, the group
$\mathrm{Ext}^1(H,\mathbb{Z})\cong\mathrm{Ext}^1(T,\mathbb{Z})$ 
is canonically isomorphic
to $\mathrm{Hom}(T,\mathbb{Q}/\mathbb{Z})$, the Pontryagin dual of $T$
(verify this by using the exact 
sequence $0 \to \mathbb{Z} \to \mathbb{Q} \to \mathbb{Q}/\mathbb{Z} \to 0$, 
and noting that $\mathbb{Q}$ is injective while $\mathrm{Hom}(T,\mathbb{Q})
=0$). In particular, the canonical isomorphism $TH^i(M)\cong\mathrm{Ext}^1
(TH_{i-1}(M),\mathbb{Z})$ coming from the UCT yields a non-singular 
pairing $TH^i(M)\times TH_{i-1}(M)\to \mathbb{Q}/\mathbb{Z}$. 
The form in~(\ref{linkingtorsion}) then follows by using 
Poincar\'e duality (in its not necessarily orientable version, 
see~\cite[Theorem~3H.6]{hatcher} or~\cite[Theorem~4.51]{ranicki}).
As explained by Barden in \cite[Section~0.7]{barden} (in the orientable case),
the resulting pairing can be interpreted geometrically 
as the classical torsion linking number~(\cite{KM,ST,wall}). 

\medskip
Recall the group $G$ and orbit space $E_m$ in 
Notation~\ref{convenience}.
We next indicate how the isomorphisms
\begin{equation}\label{linkisos}
TH^i(M)\cong TH^j(M;\widetilde{\mathbb{Z}}),\quad i+j=2m,
\end{equation}
coming from~(\ref{linkingtorsion}) for $M=E_m$ 
can be used for computing 
most of the integral cohomology groups of $F(\P^m,2)$ 
and $B(\P^m,2)$.

\medskip
Since $V_{m+1,2}$ is 
($m-2$)-connected\footnote{Low 
dimensional cases with $m\leq3$ are given special attention 
in Example~\ref{babyexample}, Remark~\ref{elm1}, and~(\ref{casoespecial}) 
in the following sections.},
the map in~(\ref{lap}) 
is ($m-1$)-connected. Therefore it induces an isomorphism
(respectively, monomorphism) in cohomology with any---possibly 
twisted, in view of~\cite[Theorem~$6.4.3^*$]{whitehead}---coefficients 
in dimensions $i\leq m-2$ (respectively, $i=m-1$). Together 
with Corollary~\ref{HBD} and Lemmas~\ref{inicio2Handel} and~\ref{kunneth}, 
this leads to the explicit 
description of the groups in Theorems~\ref{descripcionordenada} 
and~\ref{descripciondesaordenada}\hspace{.5mm} 
in dimensions at most $m-2$. The corresponding  
groups in dimensions at least $m+2$ can then be obtained from
the isomorphisms~(\ref{linkisos}) 
and the full description in Section~\ref{HBprelim}
of the twisted and untwisted cohomology groups of $BG$.
Note that the last step requires knowing that, when  
$E_m$ is non-orientable (as determined in 
Proposition~\ref{orientabilidadB} and Remark~\ref{analogousversionsF}), the 
twisted coefficients $\widetilde{\mathbb{Z}}$ agree with those 
$\mathbb{Z}_\alpha$ used in Theorem~\ref{modulestructure}. But such a 
requirement is a direct consequence of Theorem~\ref{D8actionV}.  
Since the torsion-free subgroups of $H^*(E_m)$ are easily identifiable from 
a quick glance at the $E_2$-term of the 
CLSS for the $G$-action on $V_{m+1,2}$,
only the torsion subgroups in 
Theorems~\ref{descripcionordenada} and~\ref{descripciondesaordenada} 
in dimensions 
\begin{equation}\label{faltantes} 
\mbox{$m-1$, $\;m$, $\;$and $\,\;m+1$}
\end{equation} 
are lacking description in this argument.

\medskip
A deeper analysis of the CLSS of the $G$-action on $V_{m+1,2}$
(worked out in Sections~\ref{HB} and~\ref{problems3} for $G=D_8$, 
and discussed briefly in Section~\ref{HF} for $G=\Z2\times\Z2$)
will give us (among other things) a detailed description of 
the three missing cases in~(\ref{faltantes}) {\it except} 
for the ($m+1$)-dimensional group when $G=D_8$ 
and $m\equiv3\bmod4$. Note that this apparently
singular case cannot be handled directly with the torsion linking form 
argument in the previous paragraph because the connectivity 
of $V_{m+1,2}$ only gives
the injectivity, but not the surjectivity, of the first map in the composite
\begin{equation}\label{excepcional}
H^{m-1}(BD_8;\mathbb{Z}_\alpha)\stackrel{p^*}\longrightarrow 
H^{m-1}(\bpm;\mathbb{Z}_\alpha)\cong H^{m+1}(\bpm).
\end{equation}
To overcome the problem, in Section~\ref{problems3} we perform 
a direct calculation in the first two pages of the Bockstein spectral 
sequence (BSS) of $B(\P^{4a+3},2)$ to prove that~(\ref{excepcional}) is 
indeed an isomorphism for $m\equiv3\bmod4$---therefore 
completing the proof of Theorems~\ref{descripcionordenada} 
and~\ref{descripciondesaordenada}.

\medskip
\begin{table}[h]
\centerline{
\begin{tabular}{|c|c|c|c|c|c|c|c|c|c|c|c|c|c|c|c|}
\hline
\rule{0pt}{15pt}$*={}$& 2 & 3 & 4 & 5 & 6 & 7 & 8 & 9 & 10 & 
11 & 12 & 13 & 14\\[0.5ex]
\hline
\rule{0pt}{15pt}$H^*(E_{2,D_8})$ &$\langle2\rangle$&&&&&&&&&&&&\\ [0.5ex]
\hline
\rule{0pt}{15pt}$H^*(E_{4,D_8})$ &$\langle2\rangle$&$\langle1\rangle$
&\{2\}&
$\langle1\rangle$&$\langle2\rangle$&&&&&&&&\\ [0.5ex]
\hline
\rule{0pt}{15pt}$H^*(E_{6,D_8})$ &$\langle2\rangle$&$\langle1\rangle$&
\{2\}
&$\langle2\rangle$&$\langle4\rangle$&$\langle2\rangle$&\{2\}&
$\langle1\rangle$&$\langle2\rangle$&&&&\\ [0.5ex]
\hline
\rule{0pt}{15pt}$H^*(E_{8,D_8})$ &$\langle2\rangle$&$\langle1\rangle$&
\{2\}&$\langle2\rangle$&$\langle4\rangle$&
$\langle3\rangle$&\{4\}
&$\langle3\rangle$&$\langle4\rangle$&$\langle2\rangle$
&\{2\}&$\langle1\rangle$&$\langle2\rangle$\\ [0.5ex]
\hline
\end{tabular}}
\caption{$H^*(E_{m,D_8})\cong H^*(B(\P^m,2))\,$ for 
$m=2,4,6$, and $8$
\label{tabla}}
\end{table}

The isomorphisms in~(\ref{linkisos}) yield a 
(twisted, in the non-orientable case) 
symmetry for the torsion groups of $H^*(E_m)$. This is 
illustrated (for $G=D_8$ and in the orientable case) 
in Table~\ref{tabla} following the conventions set 
in the very first paragraph of the paper.

\section{Case of $B(\P^m,2)$ for 
$m\not\equiv3\bmod4$}\label{HB}

This section and the next one contain
a careful study of the CLSS of the $D_8$-action on
$V_{m+1,2}$ described in Definition~\ref{inicio1Handel}; 
the corresponding (much simpler) analysis for the restricted 
$(\Z2\times\Z2)$-action is outlined in Section~\ref{HF}. The CLSS 
approach will yield, in addition, direct proofs of 
Proposition~\ref{mono4} and Theorems~\ref{HFpar} and~\ref{HF1m4}.
The reader is assumed to be familiar with the properties of the CLSS
of a regular covering space, complete details of which first appeared 
in~\cite{cartan}.

\medskip
We start with the less involved situation of an even $m$ and,
as a warm-up, we consider first the case $m=2$.

\begin{ejemplo}\label{babyexample}{\em
Lemmas~\ref{inicio2Handel} and~\ref{handel68D}, Corollary~\ref{HBD}, and 
Theorem~\ref{D8actionV}
imply that, in total dimensions at most $\dim(V_{3,D_8})=3$,
the (integral cohomology) CLSS for the $D_8$-action on $V_{3,2}$
starts as 

\begin{picture}(0,88)(-100,-14)
\qbezier[50](0,0)(45,0)(90,0) 
\qbezier[40](0,0)(0,33)(0,66) 
\multiput(-2.5,-3)(25,0){1}{$\mathbb{Z}$}
\put(22.5,-11){\scriptsize$1$}
\put(47.5,-11){\scriptsize$2$}
\put(72.5,-11){\scriptsize$3$}
\put(-10,17.5){\scriptsize$1$}
\put(-10,37.5){\scriptsize$2$}
\put(-10,57.5){\scriptsize$3$}
\put(43.5,-3){$\langle2\rangle$}
\put(68.5,-3){$\langle1\rangle$}
\put(-5.35,37.1){$\langle1\rangle$}
\put(19,37.1){$\langle2\rangle$}
\put(-3,56.8){$\mathbb{Z}$}
\end{picture}

\noindent The only possible nontrivial differential in this range 
is $d_3^{\,0,2}\colon E_2^{\,0,2}\to E_2^{\,3,0}$, which must be an
isomorphism in view of the second assertion in 
Proposition~\ref{orientabilidadB}. This yields the $\P^2$-case in 
Proposition~\ref{mono4} and Theorems~\ref{descripciondesaordenada} 
and~\ref{HFpar} (with $G=D_8$ in the latter one).
As indicated in Table~\ref{tabla}, the symmetry
isomorphisms are invisible in the current situation. 
It is worth noticing that the $d_3$-differential
originating at node $(1,2)$ must be injective. This observation 
will be the basis in our argument for the general situation, where 
2-rank considerations will be the catalyst. Here and in what follows,
by the {\it {\rm 2}-rank} (or simply {\it rank})
of a finite abelian 2-group $H$ we mean the rank 
($\F2$-dimension) of $H\otimes\F2$.
}\end{ejemplo}

\begin{proof}[Proof of Theorem~{\em\ref{HFpar}} 
for $G\hspace{.8mm}{=}\hspace{.8mm}D_8$, 
and of Proposition~{\em\ref{mono4}}, 
both with $\,m$ even, $m\hspace{.8mm}{\geq}\hspace{.8mm}4$]
The assertion in Theorem~\ref{HFpar} for 

\vspace{-3mm}
\begin{itemize}
\item $i\geq2m$ follows from 
Lemma~\ref{inicio2Handel}
and the fact that $\dim(V_{m+1,2})=2m-1$, and for

\vspace{-2mm}
\item $i=2m-1$ follows from the fact that $H^{2m-1}(BD_8)$
is a torsion group (Corollary~\ref{HBD}) while $H^{2m-1}(\bpm)=\mathbb{Z}$ 
(Proposition~\ref{orientabilidadB}).
\end{itemize} 

\vspace{-3mm}\noindent
We work with the (integral cohomology) 
CLSS for the $D_8$-action on $V_{m+1,2}$
in order to prove Propostion~\ref{mono4} 
and the  assertions in Theorem~\ref{HFpar} for $i<2m-1$.

\smallskip
In view of Theorem~\ref{D8actionV}, the spectral sequence
has a simple system of coefficients and, from the 
description of $H^*(V_{m+1,2})$ in the proof of Theorem~\ref{D8actionV},
it is concentrated in the three horizontal lines with $q=0,m,2m-1$. We can
focus on the lines with $q=0,m$ in view of the range under 
current consideration. At the start of the CLSS
there is a copy of
 
\vspace{-3mm}
\begin{itemize}
\item $H^*(BD_8)$ (described by 
Corollary~\ref{HBD}) at the line with $q=0$;

\vspace{-2mm}
\item $H^*(BD_8,\F2)$
(described by Lemma~\ref{handel68D}) at the line with $q=m$.
\end{itemize} 

\vspace{-3mm}\noindent
Note that the assertion in Theorem~\ref{HFpar}
for $i<m$ is an obvious consequence of the 
above description of the $E_2$-term of the CLSS. The case $i=m$ 
will follow once we show that the ``first'' potentially nontrivial
differential $d_{m+1}^{\hspace{.25mm}0,m}
\colon E_2^{0,m}\to E_2^{m+1,0}$ is injective.
More generally, we show in the paragraph 
following~(\ref{sharp}) below that 
\begin{equation}\label{difiny}
\mbox{all differentials $d_{m+1}^{\hspace{.25mm}m-\ell-1,m}
\colon E_2^{m-\ell-1,m}\to E_2^{2m-\ell,0}$ 
with $0<\ell<m$ are injective.}
\end{equation}
From this, the assertion in Theorem~\ref{HFpar} 
for $m<i<2m-1$ follows at once.

\smallskip
The information we need about differentials is 
forced by the ``size'' of their domains and codomains.
For instance, since $H^{2m-1}(\bpm)$ is torsion-free, all of 
$E_2^{2m-1,0}=H^{2m-1}(BD_8)=\langle m-1\rangle$ must be killed 
by differentials. But the only possibly nontrivial
differential landing in 
$E_2^{2m-1,0}$ is the one in~(\ref{difiny}) with $\ell=1$. 
The resulting surjective $d_{m+1}^{m-2,m}$ map 
must be an isomorphism since its domain, $E_2^{m-2,m}=H^{m-2}(BD_8;{\F2})=
\langle m-1\rangle$, is isomorphic to its codomain.

\medskip
The extra input we need in order 
to deal with the rest of the differentials in~(\ref{difiny})
comes from the short exact sequences
\begin{equation}\label{spliteadas}
0\to\mbox{Coker}(2_{i})\to H^i(\bpm;{\F2})\to\mbox{Ker}(2_{i+1})\to0
\end{equation}
obtained from the Bockstein long exact sequence 
$$ 
\cdots\leftarrow H^i(B(\P^m,2);{\F2})\stackrel{\pi_i}\leftarrow
H^i(B(\P^m,2))\stackrel{2_i}\leftarrow 
H^i(B(\P^m,2))\stackrel{\partial_i}\leftarrow H^{i-1}(B(\P^m,2);{\F2})
\leftarrow\cdots.
$$ 
From the $E_2$-term of the spectral sequence we easily see 
that ($H^1(\bpm)=0$ and that) $H^i(\bpm)$ {is} a finite 
$2$-torsion group for $1<i<2m-1$; let $r_i$ denote its $2$-rank. Then 
Ker$(2_i)\cong{}$Coker$(2_i)\cong\langle r_i\rangle$, so 
that~(\ref{spliteadas}),
Corollary~\ref{handel68B}, and an easy induction
(grounded by the fact that $\mbox{Ker}(2_{2m-1})=0$, in view 
of the second assertion in 
Proposition~\ref{orientabilidadB}) yield
\begin{equation}\label{ranks}
r_{2m-\ell}=\begin{cases}a+1,&\ell=2a;\\a,&\ell=2a+1;\end{cases}
\end{equation}
for $2\leq\ell\leq m-1$. Under these conditions, the $\ell$-th differential
in~(\ref{difiny}) takes the form
\begin{equation}\label{sharp}
\langle m-\ell\rangle\hspace{.5mm}{=}\hspace{.5mm}H^{m-\ell-1}(BD_8;{\F2})
\to H^{2m-\ell}(BD_8)\,{=}\begin{cases}
%\left\langle m-\frac\ell2\right\rangle\oplus\mathbb{Z}_4
\left\{m-\frac\ell2\right\}
,&\ell\equiv0\bmod4;\\
\left\langle m-\frac{\ell-2}2\right\rangle,&\ell\equiv2\bmod4;\\
\left\langle m-\frac{\ell+1}2\right\rangle,&\mbox{otherwise}.\\
\end{cases}
\end{equation}
But the cokernel of this map, which is a subgroup of $H^{2m-\ell}(\bpm)$,
must have 2-rank at most $r_{2m-\ell}$. An easy counting argument
(using the right exactness of the tensor product) shows that this is 
possible only with an injective differential~(\ref{sharp}) 
which, in the case of $\ell\equiv0\bmod4$, yields an injective 
map even after tensoring\footnote{This amounts to the fact that 
twice the generator of the $\mathbb{Z}_4$-summand in~(\ref{sharp}) 
is not in the image of~(\ref{sharp})---compare to the proof of 
Proposition~\ref{algebraico}.} with $\Z2$.

\medskip
Note that, in total dimensions at most $2m-2$, 
the $E_{m+2}$-term of the spectral sequence is concentrated 
on the base line ($q=0$). Thus, for $2\leq\ell\leq m-1$, 
$H^{2m-\ell}(\bpm)$ is the cokernel of the 
differential~(\ref{sharp})---which yields the surjectivity 
asserted in Theorem~\ref{HFpar} in the range $m<i<2m-1$. 
Furthermore the kernel of $p^*\colon 
H^{2m-\ell}(BD_8)\to H^{2m-\ell}(\bpm)$ is the elementary abelian 2-group 
specified on the left hand side of~(\ref{sharp}). In fact, the observation in
the second half of the final assertion in the previous paragraph proves 
Proposition~\ref{mono4}.
\end{proof}

As indicated in the last paragraph of the previous proof, for
$2\leq\ell\leq m-1$ the CLSS 
analysis identifies the group $H^{2m-\ell}(\bpm)$
as the cokernel of~(\ref{sharp}). Thus, the following algebraic 
calculation of these groups not only gives us
an alternative approach to that 
using the non-singularity of the torsion linking form, but it also allows 
us to recover (for $m$ even and $G=D_8$) the three missing cases 
in~(\ref{faltantes})---therefore completing the proof of 
the $\P^{\mathrm{even}}$-case of Theorem~\ref{descripciondesaordenada}.

\begin{proposicion}\label{algebraico}
For $2\leq\ell\leq m-1$, 
the cokernel of the differential~{\em(\ref{sharp})} is isomorphic to
$$
H^{2m-\ell}(\bpm)=\begin{cases}
%\left\langle\frac\ell2\right\rangle\oplus\mathbb{Z}_4
\left\{\frac\ell2\right\},&\ell\equiv0\bmod4;\\
\left\langle\frac{\ell}2+1\right\rangle,&\ell\equiv2\bmod4;\\
\left\langle\frac{\ell-1}2\right\rangle,&\mbox{otherwise}.
\end{cases}
$$
\end{proposicion}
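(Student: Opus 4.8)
The plan is to compute the cokernel of the map~(\ref{sharp}) by combining two pieces of information that are already available: the exact group-theoretic shape of the domain and codomain (from Corollary~\ref{HBD}), and the already-established $2$-rank $r_{2m-\ell}$ of the cokernel (from~(\ref{ranks})), together with the refined injectivity statement noted in the proof above — namely that~(\ref{sharp}) is injective and, when $\ell\equiv0\bmod4$, remains injective after tensoring with $\Z2$. Since the domain of~(\ref{sharp}) is the elementary abelian group $\langle m-\ell\rangle$ and the codomain is the group $H^{2m-\ell}(BD_8)$ displayed in the statement, the cokernel is determined by how the image sits inside that codomain. First I would treat the three congruence classes of $\ell$ modulo $4$ separately, since the codomain $H^{2m-\ell}(BD_8)$ has a genuinely different structure ($\{m-\ell/2\}$, $\langle m-(\ell-2)/2\rangle$, or $\langle m-(\ell+1)/2\rangle$) in each case.

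For $\ell$ odd (the ``otherwise'' case): here $H^{2m-\ell}(BD_8)=\langle m-\frac{\ell+1}{2}\rangle$ is elementary abelian of rank $m-\frac{\ell+1}{2}$, the domain has rank $m-\ell$, and by~(\ref{ranks}) the cokernel has rank $r_{2m-\ell}=\frac{\ell-1}{2}$. Since $\left(m-\tfrac{\ell+1}{2}\right)-(m-\ell)=\tfrac{\ell-1}{2}$, an injective map between $\F2$-vector spaces forces the cokernel to be exactly $\langle\frac{\ell-1}{2}\rangle$, as claimed. For $\ell\equiv2\bmod4$: again the codomain $\langle m-\frac{\ell-2}{2}\rangle$ is elementary abelian, of rank $m-\frac{\ell-2}{2}$, the domain has rank $m-\ell$, and $r_{2m-\ell}=\frac{\ell}{2}+1$ by~(\ref{ranks}) (writing $\ell=2a$ with $a$ odd gives $r_{2m-\ell}=a+1$). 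Since $\left(m-\tfrac{\ell-2}{2}\right)-(m-\ell)=\tfrac{\ell}{2}+1$, injectivity of~(\ref{sharp}) again pins down the cokernel as $\langle\frac{\ell}{2}+1\rangle$.

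The case $\ell\equiv0\bmod4$ is the one requiring the extra input, and is where I expect the only real subtlety to lie. Here $H^{2m-\ell}(BD_8)=\{m-\tfrac{\ell}{2}\}=\langle m-\tfrac{\ell}{2}\rangle\oplus\Z_4$, so the codomain has a $\Z_4$-summand and one must decide whether the cokernel retains a $\Z_4$ or collapses to elementary abelian. The refined injectivity observation — that~(\ref{sharp}) stays injective after $\otimes\,\Z2$, i.e.\ that twice the generator of the $\Z_4$-summand is \emph{not} in the image — guarantees precisely that the $\Z_4$-summand survives intact in the cokernel. Counting $2$-ranks then finishes: the domain contributes rank $m-\ell$ to the image (it is a subgroup of the rank-$(m-\tfrac{\ell}{2}+1)$ mod-$2$ reduction of the codomain, mapping injectively there), and by~(\ref{ranks}) (with $\ell=2a$, $a$ even, $r_{2m-\ell}=a+1=\tfrac{\ell}{2}+1$) the cokernel has $2$-rank $\tfrac{\ell}{2}+1$; since it contains a $\Z_4$ and has that rank, and is otherwise elementary abelian by Remark~\ref{F2dimension} / Proposition~\ref{mono4}, it must be $\langle\frac{\ell}{2}\rangle\oplus\Z_4=\{\tfrac{\ell}{2}\}$. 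I would present this last case carefully with an explicit generator count, essentially the bookkeeping sketched in the footnote to~(\ref{sharp}), and reference Proposition~\ref{mono4} for the fact that no higher torsion than $\Z_4$ can appear. The main obstacle throughout is purely the parity/congruence bookkeeping in matching the rank formula~(\ref{ranks}) against the codomain structure from Corollary~\ref{HBD}; once the $\ell\equiv0\bmod4$ case is handled via the $\otimes\,\Z2$-injectivity, everything else is forced by dimension counting.
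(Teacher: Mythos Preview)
Your treatment of the two easy cases ($\ell$ odd and $\ell\equiv 2\bmod 4$) is exactly the ``simple count'' the paper alludes to: in both cases domain and codomain are elementary abelian, the differential is injective, and subtracting ranks gives the answer immediately.

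For $\ell\equiv 0\bmod 4$ your route differs from the paper's. The paper sets up a two-row commutative diagram: the top row is the short exact sequence $0\to\langle m-\ell\rangle\to\{m-\tfrac{\ell}{2}\}\to H^{2m-\ell}(\bpm)\to 0$ coming from~(\ref{sharp}), and the bottom row is its restriction to the $2$-torsion subgroup $\langle m-\tfrac{\ell}{2}+1\rangle\subset\{m-\tfrac{\ell}{2}\}$. The snake lemma then shows $\langle\tfrac{\ell}{2}+1\rangle\hookrightarrow H^{2m-\ell}(\bpm)$ with cokernel $\Z2$, and~(\ref{ranks}) forces this extension to be nontrivial, hence $\{\tfrac{\ell}{2}\}$. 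Your approach is more direct and avoids the snake lemma entirely, which is a genuine simplification.

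That said, your argument for this case is slightly overcomplicated and one of your citations is off. You do not actually need the $\otimes\,\Z2$-injectivity or the survival of the $\mathbb{Z}_4$-generator at all: since~(\ref{sharp}) is injective, the cokernel has order $2^{(m-\ell/2+2)-(m-\ell)}=2^{\ell/2+2}$; and by~(\ref{ranks}) it has $2$-rank $\tfrac{\ell}{2}+1$. A finite abelian $2$-group of order $2^{\ell/2+2}$ and rank $\tfrac{\ell}{2}+1$ is automatically $\{\tfrac{\ell}{2}\}$ (exactly one cyclic summand has order $4$, the rest order $2$). Your appeal to Proposition~\ref{mono4} and Remark~\ref{F2dimension} for ``otherwise elementary abelian'' is misplaced---those statements concern $\mathrm{Ker}(p^*)$, not the cokernel---though the fact itself is true for the elementary reason that any quotient of $\{m-\tfrac{\ell}{2}\}$ satisfies $|2\cdot Q|\le 2$. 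Replacing that citation with the order computation makes your argument both cleaner than the paper's and fully rigorous.
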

\begin{proof}
Cases with $\ell\not\equiv0\bmod4$ follow from a simple count, 
so we only offer an argument for $\ell\equiv0\bmod4$. Consider the diagram
with exact rows
%$$\xymatrix{0\ar[r] & \langle m-\ell\rangle\ar[r] 
%& \left\{m-\frac\ell2\right\}\ar[r]
%& H^{2m-\ell}(\bpm)\ar[r] &
%0 \\ 0\ar[r] & \langle m-\ell\rangle\ar[r] \ar@{=}[u]
%& \langle m-\frac\ell2+1\rangle \ar@{^{(}->}[u]
%\ar[r] & \langle\frac\ell2+1\rangle\ar[r] \ar@{^{(}->}[u] & 0}$$

\begin{picture}(0,65)(-36.5,-6)
\put(0,40){$0$}
\put(8,42){\vector(1,0){15}}
\put(26,40){$\langle m-\ell\rangle$}
\put(55,42){\vector(1,0){22}}
\put(80,40){$\left\{ m-\frac{\ell}2\right\}$}
\put(115,42){\vector(1,0){22}}
\put(140,40){$H^{2m-\ell}(\bpm)$}
\put(205,42){\vector(1,0){15}}
\put(224,40){$0$}
\put(0,3){$0$}
\put(8,5){\vector(1,0){15}}
\put(26,3){$\langle m-\ell\rangle$}
\put(55,5){\vector(1,0){17}}
\put(75,3){$\left\langle m-\frac{\ell}2+1\right\rangle$}
\put(122,5){\vector(1,0){33}}
\put(158,3){$\left\langle\frac{\ell}2+1\right\rangle$}
\put(185,5){\vector(1,0){35}}
\put(224,3){$0$}
\put(39,14){\line(0,1){20}}
\put(40.5,14){\line(0,1){20}}
\put(95,14){\vector(0,1){20}}
\put(93.5,14){\oval(3,3)[b]}
\put(172,14){\vector(0,1){20}}
\put(170.5,14){\oval(3,3)[b]}
\end{picture}

\noindent 
where the top horizontal monomorphism is~(\ref{sharp}), and where the middle
group on the bottom is included in the top one as the elements annihilated by
multiplication by~$2$. The lower right group is $\langle\frac\ell2+1\rangle$
by a simple counting. The snake lemma shows that the right-hand-side vertical 
map is injective with cokernel $\Z2$; the resulting extension is 
nontrivial in view of~(\ref{ranks}).
\end{proof}

\begin{ejemplo}\label{muchkernel1}{\em 
For $m$ even,~\cite[Theorem~1.4~(D)]{idealvalued} identifies
three explicit elements in the kernel of $p^*\colon H^i(BD_8)\to 
H^i(\bpm)$: one for each of $i=m+2$, $i=m+3$, and $i=m+4$. 
In particular, this produces at most four basis elements in 
the ideal Ker$(p^*)$ in dimensions at most $m+4$. However
we have just seen that, for $m+1\leq i\leq 2m-1$, the kernel of 
$p^*\colon H^i(BD_8)\to H^i(\bpm)$ is an $\F2$-vector space of dimension 
$i-m$. This means that through dimensions at most $m+4$ (and with $m>4$)
there are at least six more basis elements remaining
to be identified in Ker$(p^*)$.
}\end{ejemplo}

We next turn to the case when $m$ is odd
(a hypothesis in force throughout the rest of the section)
assuming, from Lemma~\ref{d2m1} on, that 
$m\equiv1\bmod4$.

\begin{nota}\label{elm1}{\em
Since the $\P^1$-case in Proposition~\ref{mono4} and 
Theorems~\ref{descripciondesaordenada} and~\ref{HF1m4} is elementary
(in view of Remark~\ref{m1} and Corollary~\ref{HBD}), we will implicitly 
assume $m\neq1$.
}\end{nota}

\medskip The CLSS of the $D_8$-action on $V_{m+1,2}$
now has a few extra complications that turn the analysis of 
differentials into a 
harder task. To begin with, we find a twisted system of 
local coefficients (Theorem~\ref{D8actionV}). As a $\mathbb{Z}[D_8]$-module, 
$H^{q}(V_{m+1,2})$ is:

\vspace{-3mm}
\begin{itemize}
\item $\mathbb{Z}$ for $q=0,m$;

\vspace{-3mm}
\item $\mathbb{Z}_\alpha$ for $q=m-1,2m-1$; 

\vspace{-3mm}
\item the zero module otherwise.
\end{itemize}

\noindent 
Thus, in total dimensions at most $2m-2$ the CLSS
is concentrated on the three horizontal lines with $q=0,m-1,m$.
[This is in fact the case in total dimensions at most $2m-1$, since
$H^0(BD_8;\mathbb{Z}_\alpha)=0$; this observation is not relevant for
the actual group $H^{2m-1}(B(\P^m,2))=\Z2$---given in
the second assertion in Proposition~\ref{orientabilidadB}---, 
but it will be relevant for the claimed surjectivity
of the map $p^*\colon H^{2m-1}(BD_8)\to H^{2m-1}(\bpm)$.]
In more detail, at the start of the CLSS
we have a copy of $H^*(BD_8)$ at
$q=0,m$, and a copy of $H^*(BD_8;\mathbb{Z}_\alpha)$ at $q=m-1$.
It is the extra horizontal 
line at $q=m-1$ (not present for an even $m$) that leads to 
potential $d_2$-differentials---from the ($q=m$)-line to the ($q=m-1$)-line.
Sorting these differentials out is the main difficulty (which we 
have been able to overcome
only for $m\equiv1\bmod4$). Throughout the remainder of the section we
work in terms of this spectral sequence, making free use of the
description of its $E_2$-term coming from Corollaries~\ref{HBD} and~\ref{HBDT},
as well as of its $H^*(BD_8)$-module structure. Note that the latter property
implies that much of the global structure of the spectral sequence
is dictated by differentials on the three elements
\begin{itemize}
\item $x_m\in E_2^{0,m}=H^0(BD_8;H^m(V_{m+1,2}))=
H^0(BD_8;\mathbb{Z})=\mathbb{Z}$;
\item $\alpha_1\in E_2^{1,m-1}=
H^1(BD_8;H^{m-1}(V_{m+1,2}))=H^1(BD_8;\mathbb{Z}_\alpha)=\mathbb{Z}_2$;
\item $\alpha_2\in E_2^{2,m-1}=H^2(BD_8;H^{m-1}(V_{m+1,2}))=H^2(BD_8;
\mathbb{Z}_\alpha)=\mathbb{Z}_4$;
\end{itemize}
each of which is 
a generator of the indicated group (notation is inspired by
that in Theorem~\ref{modulestructure} and in the proof of 
Theorem~\ref{D8actionV}---for even $n$).

\begin{lema}\label{d2m1}
For $m\equiv1\bmod4$ and
$m\geq 5$, the nontrivial $d_2$-differentials are
given by $d_2^{\hspace{.25mm}4i,m}(\kappa_4^i x_m)=2
\kappa_4^i\alpha_2$ for $i\geq0$.
\end{lema}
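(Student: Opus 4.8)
The plan is to determine the $d_2$-differentials in the CLSS of the $D_8$-action on $V_{m+1,2}$ (for $m\equiv1\bmod4$, $m\geq5$) by combining three ingredients: the $H^*(BD_8)$-module structure of the spectral sequence, naturality with respect to the restriction to the subgroup $\Z2\times\Z2$, and a ``size'' count forced by the known $\F2$-cohomology of $\bpm$. Since $H^{m-1}(V_{m+1,2})=\mathbb{Z}_\alpha$, the only nonzero groups on the $q=m$ and $q=m-1$ lines that can interact via $d_2$ are, respectively, copies of $H^p(BD_8)$ and $H^{p+2}(BD_8;\mathbb{Z}_\alpha)$; and by multiplicativity the entire $d_2$ pattern is governed by its effect on the module generator $x_m\in E_2^{0,m}=\mathbb{Z}$, together with the behaviour of $\alpha_1,\alpha_2$ on the $q=m-1$ line. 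First I would observe that $d_2^{0,m}(x_m)\in E_2^{2,m-1}=H^2(BD_8;\mathbb{Z}_\alpha)=\mathbb{Z}_4$ must be a $2$-torsion element: indeed $2x_m$ is not hit by anything and $x_m$ itself supports no earlier differential, while the torsion-free part of $H^*(\bpm)$ is already pinned down (it is $\mathbb{Z}$ in degrees $0$ and, for $m$ odd, $2m-1$, the latter being detected on the $q=0$ line), so the class $x_m$ must ultimately die modulo its multiples, forcing $d_2(x_m)\in\{0,\,2\alpha_2\}$.

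Next I would rule out $d_2^{0,m}(x_m)=0$. The cleanest route is a rank count analogous to the even case: if $d_2^{0,m}$ vanished then, by multiplicativity, all of $d_2^{4i,m}$ would vanish, the $q=m$ and $q=m-1$ lines would survive to $E_3$ essentially intact through a large range, and the resulting $E_\infty$ would be too big to be compatible with Lemma~\ref{initialF} (which gives $\dim_{\F2}H^i(F(\P^m,2);\F2)$, and by the same token, via Corollary~\ref{handel68B}, bounds $\dim_{\F2}H^i(\bpm;\F2)$). Concretely, $H^{m+1}(\bpm;\F2)=\langle m-1\rangle$ by Corollary~\ref{handel68B}, and one tallies the contributions to total degree $m+1$ at $E_2$ from the lines $q=0$ ($H^{m+1}(BD_8)$, of $\F2$-rank roughly $(m+1)/2$), $q=m-1$ ($H^2(BD_8;\mathbb{Z}_\alpha)=\mathbb{Z}_4$, contributing rank $1$), and $q=m$ ($H^1(BD_8)=\Z2$); without the $d_2$ killing $\Z2\subset\mathbb{Z}_4=E_2^{2,m-1}$, the total rank at $E_\infty$ cannot drop to $m-1$. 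A parallel count at $E_2^{4i,m}$ versus $E_2^{4i+2,m-1}$ handles all $i\geq0$ simultaneously, showing each $d_2^{4i,m}(\kappa_4^ix_m)$ must be the nonzero $2$-torsion element $2\kappa_4^i\alpha_2$ (note $\kappa_4^i\alpha_2$ generates a $\mathbb{Z}_4$ by Theorem~\ref{modulestructure}, so ``$2\kappa_4^i\alpha_2$'' is unambiguous and nonzero).

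Finally I would check that these are \emph{all} the nontrivial $d_2$'s: the module structure propagates $d_2(x_m)=2\alpha_2$ to $d_2(\xi x_m)=2\xi\alpha_2$ for every $\xi\in H^*(BD_8)$, and the relations $\lambda_3\alpha_1=\mu_2\alpha_2$, $\kappa_4\alpha_1=\lambda_3\alpha_2$ of Theorem~\ref{modulestructure}, together with $2\alpha_1=0$, show that the only products of the form $\xi\alpha_2$ that are both $2$-divisible targets and actually receive a class from the $q=m$ line are the $\kappa_4^i\alpha_2$; contributions $d_2(\xi x_m)=2\xi\alpha_2$ with $\xi$ not a power of $\kappa_4$ vanish because $2\xi\alpha_2=0$ in those cases (e.g.\ $2\mu_2=2\nu_2=2\lambda_3=0$). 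One must also confirm that no $d_2$ emanates from the $q=m-1$ line into $q=m-2$ (vacuous, that line is zero), and that the classes $\alpha_1,\alpha_2$ themselves support no $d_2$ for degree reasons ($E_2^{p,m-2}=0$). The main obstacle, I expect, is making the rank-count in the previous paragraph genuinely tight rather than merely suggestive: one has to track, through total degrees near $m+1$, the interplay of the three horizontal lines and verify that the \emph{only} way to reconcile the $E_2$-ranks with the $\F2$-Betti numbers from Corollary~\ref{handel68B} is via the stated family of $d_2$'s — in particular ruling out the a priori possibility that $d_2$ is trivial but compensated by larger higher differentials, which is where the ``first potentially nonzero differential'' principle and the precise value of $H^{m-1}(\bpm;\F2)=\langle1\rangle$ (for $m\geq5$, so $2m-(m-1)=m+1>m-1$, forcing the relevant groups into the stable range) must be used carefully.
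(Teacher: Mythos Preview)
Your proposal has two genuine gaps, and both stem from working in the wrong total degree. In the first paragraph you try to show $d_2(x_m)\in\{0,2\alpha_2\}$ by appealing to the torsion-free part of $H^*(\bpm)$, but the argument is both factually off (for odd $m$ the free $\mathbb{Z}$ sits in degree $m$, coming from $E_\infty^{0,m}$, while $H^{2m-1}(\bpm)=\Z2$ by Proposition~\ref{orientabilidadB}) and logically insufficient: if $d_2(x_m)=\alpha_2$ then $x_m$ still ``dies modulo its multiples'' (kernel $4\mathbb{Z}$), so nothing you wrote excludes the surjective option. The paper's actual argument for non-surjectivity is a rank count at total degree $2m-1$: with $m=4a+1$ one has $E_2^{2m-1,0}=\langle4a\rangle$, which must be cut down to rank~$1$, and the only differentials landing there come from $E^{m-1,m-1}=\langle2a\rangle$ and $E^{m-2,m}=\langle2a-1\rangle$. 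Since $2a+(2a-1)=4a-1$, both must inject with \emph{full} domain, so $E_2^{m-2,m}=E_{m+1}^{m-2,m}$; via the module structure this forces $d_2^{0,m}$ to be non-surjective.

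In the second paragraph your rank count at degree $m+1$ contains errors ($H^1(BD_8)=0$, not $\Z2$; the $2$-rank of $H^{m+1}(BD_8)$ is $2a+2$), but the real problem is the one you flag yourself and then leave unresolved: a count at degree $m+1$ cannot separate ``$d_2\neq0$'' from ``$d_2=0$ with compensating higher differentials,'' because the node $(2,m-1)$ still supports a $d_m$ into the $q=0$ line. The paper avoids this by arguing at total degree $2m$, where $H^{2m}(\bpm)=0$ forces everything to vanish. Assuming $d_2=0$, one finds that $d_m\colon E_m^{m,m-1}=\langle2a+1\rangle\to E_m^{2m,0}=\langle4a+2\rangle$ must inject and $d_{m+1}\colon E_{m+1}^{m-1,m}=\{2a\}\to E_{m+1}^{2m,0}$ must surject with kernel $\Z2$; but that surviving $\Z2$ at node $(m-1,m)$ has nowhere to go and contradicts the already-established fact that $E_\infty^{2m-1,0}=\langle1\rangle$ accounts for all of $H^{2m-1}(\bpm)$. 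Your third paragraph, reducing the full $d_2$ pattern to $d_2(x_m)$ via the module relations, is correct and matches the paper.
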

\begin{proof}
The only potentially nontrivial $d_2$-differentials
originate at the ($q=m$)-line and, in view of the module structure, all
we need to show is that
\begin{equation}\label{dif2}
d_2\colon E_2^{0,m}\to E_2^{2,m-1}\,\mbox{ has }\,d_2(x_m)=2\alpha_2
\end{equation}
(here and in what follows we omit superscripts of differentials). 

\medskip Let $m=4a+1$. Since $H^{2m-1}(\bpm)=\langle 1\rangle$, 
most of the elements in $E_2^{2m-1,0}=\langle 4a\rangle$ must be 
wiped out by differentials. The only differentials landing in a
$E_r^{2m-1,0}$ (that originate at a nonzero group) are
\begin{equation}\label{injdifs1}
d_m\colon E_m^{m-1,m-1}\to E_m^{2m-1,0}\mbox{ \ \ and \ \ }
d_{m+1}\colon E_{m+1}^{m-2,m}\to E_{m+1}^{2m-1,0}.
\end{equation}
But $E_2^{m-1,m-1}=\langle 2a\rangle$ and $E_{2}^{m-2,m}=\langle 2a-1\rangle$,
so that rank considerations imply 
\begin{equation}\label{cps1}
E_{2}^{m-2,m}=E_{m+1}^{m-2,m}, 
\end{equation}
with the two differentials in~(\ref{injdifs1})  
injective. In particular we get that
\begin{equation}\label{infty1}
H^{2m-1}(\bpm)=\langle 1\rangle \mbox{ comes from }
E_\infty^{2m-1,0}=\langle 1\rangle.
\end{equation}
Furthermore,~(\ref{cps1}) and
the $H^*(BD_8)$-module structure in the spectral sequence 
imply that the differential in~(\ref{dif2}) cannot be surjective.

\medskip
It remains to show that the differential in~(\ref{dif2}) is nonzero.
We shall obtain a contradiction by assuming that
$d_2(x_m)=0$, so that every element in the 
($q=m$)-line is a $d_2$-cycle.
Since $H^{2m}(\bpm)=0$, all of $E_2^{2m,0}=\langle 4a+2\rangle$ must be 
wiped out by differentials, and under the current hypothesis the only possible 
such differentials would be $d_m\colon E_m^{m,m-1}=E_2^{m,m-1}=\langle 
2a+1\rangle\to E_m^{2m,0}=E_2^{2m,0}$ and $d_{m+1}\colon E_{m+1}^{m-1,m}=
E_2^{m-1,m}=\langle 2a\rangle\oplus\mathbb{Z}_4\to 
E_{m+1}^{2m,0}\,$---indeed, $E_2^{0,2m-1}=H^0(BD_8;\mathbb{Z}_\alpha)=0$.
Thus, the former differential would have to be injective while the 
latter one would have to be surjective with a $\Z2$ kernel. But there 
are no further differentials that could kill the resulting 
$E_{m+2}^{m-1,m}=\langle1\rangle$, in contradiction to~(\ref{infty1}).
\end{proof}

\begin{nota}\label{criticals}{\em
In the preceding proof we  
made crucial use of the $H^*(BD_8)$-module 
structure in the spectral sequence in order to handle $d_2$-differentials. 
We show next that, just as in the proof of Theorem~\ref{HFpar} 
for $G=D_8$, many of the properties of all 
higher differentials in the case $m\equiv1\bmod4$ follow from
the ``size'' of the resulting $E_3$-term.
}\end{nota}

\begin{proof}[Proof of Theorem~{\em\ref{HF1m4}} 
for $G\hspace{.25mm}{=}\hspace{.35mm}D_8$,
and of Proposition~{\em\ref{mono4}}, both for 
$m\hspace{.5mm}{\equiv}\hspace{.5mm}1\bmod4$] 
The $d_2$-differentials in Lemma~\ref{d2m1} replace, by a 
$\Z2$-group, every instance of a $\mathbb{Z}_4$-group in the ($q=m-1$) and 
($q=m$)-lines of the $E_2$-term. This describes the $E_3$-term, the starting 
stage of the CLSS in the following considerations
(note that the $E_3$-term agrees with the $E_m$-term).
With this information the idea of the proof 
is formally the same as that in the case of an even $m$, 
namely: a little input from the 
Bockstein long exact sequence for $\bpm$ forces the injectivity of 
all relevant higher differentials
(we give the explicit details for the reader's benefit).

\smallskip
Let $m=4a+1$ (recall we are assuming $a\geq1$).
The crux of the matter is showing that the differentials 
\begin{equation}\label{difiny2}
d_m \colon E_3^{m-\ell,m-1}\to E_3^{2m-\ell,0}\;\mbox{ with }\;
\ell=0,1,2,\ldots,m
\end{equation}
and
\begin{equation}\label{difiny3}
d_{m+1} \colon E_3^{m-\ell-1,m}\to E_{m+1}^{2m-\ell,0}\;
\mbox{ with }\;\ell=0,1,2,\ldots,m-1
\end{equation}
are injective and never hit twice the generator of a $\mathbb{Z}_4$-group.
This assertion has already 
been shown for $\ell=1$ in the paragraph containing~(\ref{injdifs1}).
Likewise, the assertion for $\ell=0$ follows from~(\ref{infty1}) with
the same counting argument as the one 
used in the final paragraph of the proof of 
Lemma~\ref{d2m1}. Furthermore the case $\ell=m$ in~(\ref{difiny2}) is obvious 
since $E_3^{0,m-1}=H^0(BD_8;\mathbb{Z}_\alpha)=0$. However, since $E_3^{0,m}=
H^0(BD_8)=\mathbb{Z}$ and $E_3^{m+1,0}=H^{m+1}(BD_8)=\langle2a+2\rangle$,
the injectivity assertion needs to be suitably interpreted for $\ell=m-1$
in~(\ref{difiny3}); indeed, we will prove that  
\begin{equation}\label{sutilezainjectiva}
d_{m+1}\colon E_3^{0,m}\to E_{m+1}^{m+1,0} 
\mbox{ \ yields an injective map {\it after} tensoring with $\Z2$.}
\end{equation}
\indent From the $E_3$-term of the spectral sequence 
we easily see that $H^m(\bpm)$ is the direct 
sum of a copy of $\mathbb{Z}$ and a finite $2$-torsion group, while
$H^i(\bpm)$ is a finite $2$-torsion group for $i\neq0,m$.
We consider the analogue of~(\ref{spliteadas}), the 
short exact sequences
\begin{equation}\label{spliteadas2}
0\to\mbox{Coker}(2_{i})\to H^i(\bpm;\F2)\to\mbox{Ker}(2_{i+1})\to0,
\end{equation}
working here and below in the range $m+1\leq i\leq 2m-2$. 
Let $r_i$ denote the $2$-rank of (the torsion subgroup of) $H^i(\bpm)$,
so that Ker$(2_i)\cong{}$Coker$(2_i)\cong\langle r_i\rangle$.
Then Corollary~\ref{handel68B},~(\ref{spliteadas2}), 
and an easy induction (grounded by the fact that 
$\mbox{Ker}(2_{2m-1})=\langle1\rangle$, which in turn comes from 
the second assertion in 
Proposition~\ref{orientabilidadB}) yield that 
\begin{equation}\label{rangoagain}
\mbox{$r_{2m-\ell}$ is the integral part of 
$\frac{\ell+1}2$ for $2\leq\ell\leq m-1$}. 
\end{equation}
\indent Now, in the range of~(\ref{rangoagain}), Lemma~\ref{d2m1}
and Corollaries~\ref{HBD} and~\ref{HBDT} give
\begin{eqnarray*}
E_3^{m-\ell,m-1} & = & \begin{cases}
\left\langle 2a+1-\frac\ell2\right\rangle,&
\ell\mbox{ even};\\
\left\langle 2a-\frac{\ell-1}2\right\rangle,&\ell\mbox{ odd};
\end{cases}\\
E_3^{m-\ell-1,m} & = & \begin{cases}
\mathbb{Z}\,,&\ell=m-1;\\
\left\langle 2a+1-\frac\ell2\right\rangle,&
\ell\mbox{ even},\,\ell<m-1;\\
\left\langle 2a-\frac{\ell+1}2\right\rangle,&\ell\mbox{ odd};
\end{cases}\\
E_3^{2m-\ell,0} & = & \begin{cases}
\left\langle 4a+2-\frac\ell2\right\rangle,&\ell\equiv0\bmod4;\\
\left\{ 4a+1-\frac{\ell}2\right\}&
%\left\langle 4a+1-\frac{\ell}2\right\rangle\oplus\mathbb{Z}_4\,,
\ell\equiv2\bmod4;\\
\left\langle 4a-\frac{\ell-1}2\right\rangle,&\mbox{otherwise};
\end{cases}
\end{eqnarray*}
and since $E_{m+2}^{2m-\ell,0}$ has $2$-rank at most $r_{2m-\ell}$
(indeed, $E_{m+2}^{2m-\ell,0}=E_\infty^{2m-\ell,0}$ which is a subgroup of 
$H^{2m-\ell}(\bpm)$), an easy counting argument (using, as in the case
of an even $m$, the right exactness of the tensor product)
gives that the differentials in~(\ref{difiny2}) and~(\ref{difiny3}) must yield
an injective map after tensoring with $\Z2$. In particular they
\begin{itemize}
\item[(a)] must be injective on the nose, except for the case discussed 
in~(\ref{sutilezainjectiva});
\item[(b)] cannot hit twice the generator of a $\mathbb{Z}_4$-summand.
\end{itemize}

The already observed equalities $E_2^{0,2m-1}=H^0(BD_8;\mathbb{Z}_\alpha)=0$ 
together with~(a) above imply that, in total dimensions $t$ with $t\leq2m-1$ 
and $t\neq m$, the $E_{m+2}$-term of the spectral sequence is concentrated 
on the base line ($q=0$), while at higher lines ($q>0$)
the spectral sequence only has a $\mathbb{Z}$-group---at node $(0,m)$. 
This situation yields Theorem~\ref{HF1m4}, while~(b) above yields 
Proposition~\ref{mono4}.
\end{proof}

A direct calculation (left to the 
reader) using the proved behavior of the differentials in~(\ref{difiny2}) 
and~(\ref{difiny3})---and using (twice) the analogue of 
Proposition~\ref{algebraico} when $\ell\equiv2\bmod4\,$---gives
$$
H^{2m-\ell}(\bpm)=\begin{cases}
\left\langle\frac\ell2\right\rangle,&\ell\equiv0\bmod4;\\
%\left\langle\frac\ell2-1\right\rangle\oplus\mathbb{Z}_4
\left\{ \frac\ell2-1 \right\}
,&\ell\equiv2\bmod4;\\
\left\langle\frac{\ell+1}2\right\rangle,&\mbox{otherwise;}
\end{cases}
$$
for $2\leq\ell\leq m-1$. Thus, as the reader can easily check using
Corollaries~\ref{HBD} and~\ref{HBDT}, instead of the symmetry 
isomorphisms exemplified in Table~\ref{tabla}, the cohomology groups 
of $\bpm$ are now formed (as predicted by the 
isomorphisms~(\ref{linkisos}) 
of the previous section) by a combination of $H^*(BD_8)$ and 
$H^*(BD_8;\mathbb{Z}_\alpha)$---in the lower and upper halves, respectively.
Once again, the CLSS analysis not only offers an alternative 
to the (torsion linking form) arguments in the previous section, 
but it allows us to recover, under the present hypotheses, the 
torsion subgroup in the three missing dimensions in~(\ref{faltantes}).

\begin{ejemplo}\label{muchkernel2}{\em
For $m\equiv1\bmod4$,~\cite[Theorem~1.4~(D)]{idealvalued} identifies
two explicit elements in the kernel of $p^*\colon H^i(BD_8)\to 
H^i(\bpm)$: one for each of $i=m+1$ and $i=m+3$. 
In particular, this produces at most three basis elements in 
the ideal Ker$(p^*)$ in dimensions at most $m+3$. However it follows from
the previous spectral sequence analysis that, for $m+1\leq i\leq 2m-1$, 
the kernel of $p^*\colon H^i(BD_8)\to H^i(\bpm)$ is an $\F2$-vector space 
of dimension $i-m+(-1)^i$. 
This means that through dimensions at most $m+3$ (and with $m\geq5$)
there are at least four more basis elements remaining 
to be identified in Ker$(p^*)$.
}\end{ejemplo}

\section{Case of $B(\P^{4a+3},2)$}\label{problems3}
We now discuss some aspects of the spectral sequence of the previous section
in the unresolved case $m\equiv3\bmod4$.
Although we are unable to describe the pattern of differentials
for such $m$, we show that 
enough information can be collected to not only resolve 
the three missing cases in~(\ref{faltantes}), but 
also to conclude the proof of Theorem~\ref{HF1m4} for $G=D_8$.
Unless explicitly stated otherwise, 
the hypothesis $m\equiv3\bmod4$ will be in force throughout the section.

\begin{nota}\label{lam3}{\em
The main problem that has prevented us from fully 
understanding the spectral sequence of this section comes from the 
apparent fact that the algebraic input coming from the 
$H^*(BD_8)$-module structure in the CLSS---the crucial 
property used in the proof of Lemma~\ref{d2m1}---does not
give us enough information in order to determine the 
pattern of $d_2$-differentials. New geometric insights 
seem to be needed instead. Although 
it might be tempting to conjecture the validity
of  Lemma~\ref{d2m1} for $m\equiv 3\bmod 4$, we have not found 
concrete evidence supporting such a possibility. 
In fact, a careful analysis of the possible
behaviors of the spectral sequence for $m=3$ (performed in Example~6.4 of
the preliminary version \cite{v1} of this paper) does not give even a more
aesthetically pleasant reason for leaning toward the possibility of having a
valid Lemma~\ref{d2m1} in the current congruence. A second problem 
arose in~\cite{v1} when we noted that,
even if the pattern of $d_2$-differentials were known for 
$m\equiv3\bmod4$, there would seem to be a slight indeterminacy 
either in a few higher differentials (if Lemma~\ref{d2m1} holds for 
$m\equiv3\bmod4$), or in a few possible extensions among the $E_\infty^{p,q}$ 
groups (if Lemma~\ref{d2m1} actually fails for $m\equiv3\bmod4$).
Even though we cannot resolve
the current $d_2$-related ambiguity, in~\cite[Example~6.4]{v1} we note that,
at least for $m=3$, it is possible to overcome the above mentioned problems 
about higher differentials or possible extensions 
by making use of the explicit description of $H^4(B(\P^3,2))$---given later
in the section (considerations previous to Remark~\ref{expander}) 
in regard to the claimed surjectivity 
of~(\ref{excepcional}); see also~\cite{taylor}, 
where advantage is taken of the fact 
that $\P^3$ is a group. }\end{nota}

In the first result of this section, 
Theorem~\ref{HF1m4} for $G=D_8$ and $m\equiv3\bmod4$, we show that,
despite the previous comments, the spectral sequence approach 
can still be used to compute $H^*(B(\P^{4a+3},2))$ just beyond the middle 
dimension (i.e., just before the first problematic $d_2$-differential 
plays a decisive role). In particular, this computes the corresponding 
groups in the first two of the three missing cases in~(\ref{faltantes}).

\begin{proposicion}\label{dimm}
Let $m=4a+3$. The map $H^i(BD_8)\to H^i(\bpm)$ 
induced by~{\em(\ref{lap})} is:
{\em \begin{enumerate}
\item {\em an isomorphism for $i<m;$}
\item {\em a monomorphism onto the torsion subgroup of $H^i(\bpm)=
\langle 2a+1\rangle\oplus\mathbb{Z}$ for $i=m;$}
\item {\em the zero map for $\,2m-1<i$.}
\end{enumerate}}
\end{proposicion}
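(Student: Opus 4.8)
\textbf{Proof proposal for Proposition~\ref{dimm}.}
The plan is to exploit the CLSS of the $D_8$-action on $V_{m+1,2}$ for $m=4a+3$, exactly as in Section~\ref{HB}, but only in the range of total degrees $t<2m-1$ where the problematic $d_2$-differentials have not yet affected the outcome. As recalled at the end of Section~\ref{HB}, for odd $m$ the $E_2$-term is concentrated on the three horizontal lines $q=0,m-1,m$ in total degrees $\leq 2m-2$ (and in fact $\leq 2m-1$, since $E_2^{0,2m-1}=H^0(BD_8;\mathbb{Z}_\alpha)=0$), with $H^*(BD_8)$ on the lines $q=0,m$ and $H^*(BD_8;\mathbb{Z}_\alpha)$ on the line $q=m-1$. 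Assertion~(1), that $p^*$ is an isomorphism for $i<m$, is immediate from this description since $E_2^{p,0}=H^p(BD_8)$ is the only nonzero group in total degree $p<m$, and no differential can reach it from above (the lowest nonzero group above the base line in that range sits at $q=m-1$, requiring $p<0$). Assertion~(3), vanishing for $i>2m-1$, follows from Lemma~\ref{inicio2Handel} together with $\dim V_{m+1,2}=2m-1$, exactly as in the even case: $H^i(E_m)=0$ for $i>2m-1$, so the composite $H^i(BD_8)\to H^i(E_m)\cong H^i(\bpm)$ is zero (and $H^i(BD_8)$ is a nonzero torsion group in that range by Corollary~\ref{HBD}, so ``zero map'' is the right statement).

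For assertion~(2) I would first note that $H^m(\bpm)=\mathbb{Z}\oplus\langle 2a+1\rangle$ can be read off from the torsion-linking-form argument of Section~\ref{linkinsection} together with Proposition~\ref{orientabilidadB}: since $m\equiv 3\bmod 4$ is odd, $\bpm$ is non-orientable with $H^{2m-1}(\bpm)=\Z2$, the twisted coefficients $\widetilde{\mathbb{Z}}$ agree with $\mathbb{Z}_\alpha$ (Theorem~\ref{D8actionV}), and the connectivity of $V_{m+1,2}$ gives $H^{m-1}(\bpm)\cong H^{m-1}(BD_8)$, whose description via Corollary~\ref{HBD} feeds through the linking isomorphism~(\ref{linkisos}) to pin down $TH^{m+1}(\bpm)$—and the free summand $\mathbb{Z}$ in degree $m$ is visible directly as $E_\infty^{0,m}$ surviving (the argument that $d_2(x_m)$ is $2$-torsion, combined with no later differential out of $E_r^{0,m}$ being able to kill the free part). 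Granting the group $H^m(\bpm)=\mathbb{Z}\oplus\langle 2a+1\rangle$, the claim that $p^*$ maps $H^m(BD_8)=\langle 2a+1\rangle$ (Corollary~\ref{HBD}) monomorphically onto the torsion subgroup is then forced: the map factors as $H^m(BD_8)\to H^m(E_m)$, and since $H^m(BD_8)$ is all torsion while $H^m(E_m)=E_\infty^{0,m}\oplus E_\infty^{m,0}$ with $E_\infty^{0,m}=\mathbb{Z}$, the image lands in the torsion part $E_\infty^{m,0}$, which is a quotient of $E_2^{m,0}=H^m(BD_8)$ by the images of incoming differentials. One shows these incoming differentials vanish in this degree—the only candidates land from nodes on the $q=m-1$ line at columns $<m$, and a rank count against $H^{m-1}(\bpm;\F2)$ (via the Bockstein sequence~(\ref{spliteadas}), as in the $m\equiv1\bmod4$ proof) rules them out—so $E_\infty^{m,0}=H^m(BD_8)$ and $p^*$ restricted to this summand is an isomorphism onto $TH^m(\bpm)$.

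The delicate point, and the one I expect to require the most care, is establishing that $E_\infty^{m,0}=E_2^{m,0}$, i.e.\ that no differential hits the base-line group in total degree $m$, \emph{without} knowing the $d_2$-pattern that Remark~\ref{lam3} flags as out of reach. The key observation is that total degree $m$ lies strictly below $2m-1$, so any differential $d_r^{p,q}\to E_r^{m,0}$ has source in total degree $m-1$ on a line $q>0$; in the relevant range such a source sits on the $q=m-1$ line, hence at $p=0$, but $E_*^{0,m-1}=H^0(BD_8;\mathbb{Z}_\alpha)=0$. Thus the base-line group in degree $m$ receives nothing, and the subtlety about $d_2$-differentials (which originate on the $q=m$ line and affect degrees $\geq m$ going \emph{into} degree $m+1$ on the $q=m-1$ line, not into the base in degree $m$) is simply not triggered yet. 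This is precisely why the statement stops at ``isomorphism/monomorphism through degree $m$'' and resumes only at $i>2m-1$: the gap $m<i\leq 2m-1$ is where the $m\equiv3\bmod4$ ambiguity of Section~\ref{problems3} genuinely bites, and is handled separately (via the BSS computation around~(\ref{excepcional})) rather than in this proposition. I would therefore present the argument in the order: (1) read off from $E_2$; (3) dimension/connectivity via Lemma~\ref{inicio2Handel}; (2) identify $H^m(\bpm)$ from the linking form and Proposition~\ref{orientabilidadB}, then force $E_\infty^{m,0}=E_2^{m,0}$ by the vanishing of $H^0(BD_8;\mathbb{Z}_\alpha)$ and a Bockstein rank count, concluding that $p^*$ is a monomorphism onto the torsion subgroup.
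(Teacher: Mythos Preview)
There is a genuine gap in your treatment of assertion~(2). You write $H^m(E_m)=E_\infty^{0,m}\oplus E_\infty^{m,0}$, but in total degree $m$ the chart has a \emph{third} node, $(1,m-1)$, with $E_2^{1,m-1}=H^1(BD_8;\mathbb{Z}_\alpha)=\Z2$. Your observation that nothing hits $E_*^{m,0}$ (the only possible source being $E_*^{0,m-1}=0$) is correct, and so is $E_\infty^{0,m}=\mathbb{Z}$; but neither of these forces $E_\infty^{1,m-1}=0$, and without that you can conclude neither that $p^*$ surjects onto the torsion of $H^m(\bpm)$ nor that this torsion is $\langle 2a+1\rangle$. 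Your appeal to the linking form cannot rescue this: dimension $m$ is the self-dual degree for the $(2m{-}1)$-manifold $E_m$, so~(\ref{linkisos}) only gives $TH^m(\bpm)\cong TH^m(\bpm;\mathbb{Z}_\alpha)$, determining neither side---this is exactly why Section~\ref{linkinsection} lists $m$ among the three missing dimensions~(\ref{faltantes}). (Your aside about $TH^{m+1}$ is likewise off: the linking form pairs $TH^{m-1}(\bpm)$ with $TH^{m+1}(\bpm;\mathbb{Z}_\alpha)$, not with $TH^{m+1}(\bpm)$.)

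The paper closes the gap by looking one total degree \emph{higher}. From Corollary~\ref{handel68B} and the Bockstein sequences~(\ref{spliteadas}) (grounded at $H^{2m-1}(\bpm)=\Z2$ via Proposition~\ref{orientabilidadB}) one finds that the $2$-torsion group $H^{m+1}(\bpm)$ has $2$-rank exactly $2a+1$. Since $E_\infty^{m+1,0}$ is a subgroup of $H^{m+1}(\bpm)$ while $E_2^{m+1,0}=\{2a+2\}$ has $2$-rank $2a+3$, and since the only differentials landing at node $(m+1,0)$ are $d_m\colon E_m^{1,m-1}\to E_m^{m+1,0}$ and $d_{m+1}\colon E_{m+1}^{0,m}\to E_{m+1}^{m+1,0}$, both must be nonzero. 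In particular $d_m^{1,m-1}$ is injective (its domain is $\Z2$), so $E_\infty^{1,m-1}=0$. Now only $E_\infty^{0,m}=\mathbb{Z}$ and $E_\infty^{m,0}=\langle 2a+1\rangle$ remain in total degree $m$; the extension $0\to\langle 2a+1\rangle\to H^m(\bpm)\to\mathbb{Z}\to 0$ splits, and $p^*$ identifies $H^m(BD_8)=\langle 2a+1\rangle$ with the torsion summand. So the Bockstein rank count \emph{is} the engine you reached for, but it must be run in degree $m+1$ to kill the node $(1,m-1)$, not in degree $m$ to protect the node $(m,0)$.
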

\begin{proof}
The argument parallels that used in the analysis of the CLSS when
$m\equiv1\bmod4$. Here is the chart of the current $E_2$-term through 
total dimensions at most $m+1$:

\begin{picture}(0,91)(-50,-16)
\qbezier[100](0,0)(100,0)(200,0) 
\qbezier[40](0,0)(0,35)(0,70) 
\multiput(-2.5,-3)(25,0){1}{$\mathbb{Z}$}
\put(22.5,-13){\scriptsize$1$}
\put(47.5,-13){\scriptsize$2$}
\put(-25,37.5){\scriptsize$m-1$}
\put(-18,57.5){\scriptsize$m$}
\put(43.5,-3){$\langle2\rangle$}
\put(19.5,37.1){$\langle1\rangle$}
\put(45,37.1){$\mathbb{Z}_4$}
\put(-3,56.8){$\mathbb{Z}$}
\put(87,-11){\scriptsize$\cdots$}
\put(127,-13){\scriptsize$m-1$}
\put(157.5,-13){\scriptsize$m$}
\put(179.5,-13){\scriptsize$m+1$}
\put(131,-4){\Huge$\star$}
\put(157,-3){\Large$\bullet$}
\put(185,-1.7994){$\rule{2.2mm}{2.2mm}$}
\end{picture}

\noindent The star at node $(m-1,0)$ stands for $\langle2a+2\rangle$;
the bullet at node $(m,0)$ stands for $\langle2a+1\rangle$; 
the solid box at node $(m+1,0)$ stands for 
%\langle2a+2\rangle\oplus\mathbb{Z}_4
$\{2a+2\}$. In this range there are only three possibly nonzero
differentials: 
\begin{itemize}
\item a $d_2$ from node $(0,m)$ to node $(2,m-1)$; 
\item a $d_{m}$ from node $(1,m-1)$ to node $(m+1,0)$;
\item a $d_{m+1}$ from node $(0,m)$ to node $(m+1,0)$.
\end{itemize}
Whatever these $d_2$ and $d_{m+1}$ are, there will be a resulting 
$E_\infty^{0,m}=\mathbb{Z}$. On the other hand, the argument about 2-ranks 
in~(\ref{spliteadas}) and in~(\ref{spliteadas2}), leading respectively 
to~(\ref{ranks}) and~(\ref{rangoagain}), now yields that the torsion 2-group
$H^{m+1}(\bpm)$ has 2-rank $2a+1$. Since $E_\infty^{m+1,0}$ is a subgroup of
$H^{m+1}(\bpm)$, this forces the two differentials $d_{m}$ and $d_{m+1}$ above 
to be nonzero, each one with cokernel of 2-rank one less than the 2-rank of its 
codomain. In fact, $d_{m}$ must have cokernel isomorphic to $\{2a+1\}$, 
whereas the cokernel of $d_{m+1}$ is either $\{2a\}$ or $\langle2a+1\rangle$
(Remark~\ref{expander}, and especially~\cite[Example~6.4]{v1}, 
expand on these 
possibilities). What matters here is the forced injectivity of $d_m$, which 
implies $E_\infty^{1,m-1}=0$ and, therefore, the second assertion of the 
proposition---the first assertion is obvious from the 
CLSS, while the third 
one is elementary.
\end{proof}

We now start work on the only groups in 
Theorem~\ref{descripciondesaordenada} not 
yet computed, namely $H^{m+1}(\bpm)$ for $m=4a+3$. 
As indicated in the previous proof, 
these are torsion 2-groups of 2-rank $2a+1$. 
Furthermore,~(\ref{excepcional}) and Corollary~\ref{HBDT}
show that each such group contains a copy of $\{2a\}$,
a 2-group of the same 2-rank as that of $H^{m+1}(\bpm)$. In showing that
the two groups actually agree (thus completing the proof of 
Theorem~\ref{descripciondesaordenada}), a key fact 
comes from Fred Cohen's observation 
(recalled in the paragraph previous to Remark~\ref{m1}) that {\it there 
are no elements of order $8$}. For instance,
\begin{equation}\label{casoespecial}
\mbox{when $m=3$ the two groups must
agree since both are cyclic (i.e., have 2-rank 1). } 
\end{equation}
In order to deal with the situation for positive values of $a$, 
Cohen's observation is coupled with 
a few computations in the first two pages of 
the Bockstein spectral sequence (BSS) for $\bpm$: we will show that 
there is only one copy of $\mathbb{Z}_4$ (the one coming from the 
subgroup $\{2a\}$) in the decomposition of $H^{m+1}(\bpm)$ as a sum of 
cyclic 2-groups---forcing $H^{m+1}(\bpm)=\{2a\}$. 

\begin{nota}\label{expander}{\em
Before undertaking the BSS calculations (in Proposition~\ref{sq1} below), 
we pause to observe that, unlike the Bockstein input in all the 
previous CLSS-related proofs, the use of the BSS
does not seem to give quite enough information in order to understand the
pattern of $d_2$-differentials in the current CLSS. Much of the problem
lies in being able to decide the actual cokernel of the 
$d_{m+1}$-differential in the previous proof and, consequently,
understand how the $\mathbb{Z}_4$-group in $H^{m+1}(\bpm)$ arises
in the current CLSS; either entirely at the $q=0$ line 
(as in all cases of the previous---and the next---section), or as a nontrivial 
extension in the $E_\infty$ chart (just as in the case of the 
spectral sequence in Section~\ref{STCP67}\hspace{.3mm}---see 
also~\cite[Section~9]{taylor}). 
}\end{nota}

Recall from~\cite{feder,handel68} that the mod 2 cohomology ring of 
$\bpm$ is polynomial on three classes $x$, $x_1$, and $x_2$, of respective 
dimensions 1, 1, and 2, subject to the three relations
\begin{itemize}
\item[(I)] $\quad x^2=xx_1$;
\item[(II)] $\quad\hspace{1.5mm} \displaystyle\sum_{0\leq i\leq \frac{m}{2}}
\binom{m-i}{i}x_1^{m-2i}x_2^i=0$;
\item[(III)] $\quad \displaystyle\sum_{0\leq i\leq \frac{m+1}{2}}\binom{m+1-i}{i}
x_1^{m+1-2i}x_2^i=0$.
\end{itemize}
Further, the action of $\sq1$ is determined by (I) and
\begin{equation}\label{sq1y}
\sq1 x_2=x_1x_2.
\end{equation}
[The following observations---proved 
in~\cite{feder,handel68}, but not needed in this paper---might help 
the reader to assimilate the facts just described: 
The three generators $x$, $x_1$, and $x_2$
are in fact the images under the map $p_{m,D_8}$ 
in~(\ref{lap}) of the corresponding classes at the beginning of 
Section~\ref{HBprelim}. In turn, the latter generators $x_1$ and $x_2$
come from the Stiefel-Whitney classes $w_1$ and $w_2$ in $B\mathrm{O}(2)$
under the classifying map for the inclusion $D_8\subset\mathrm{O}(2)$. 
In these terms,~(\ref{sq1y}) corresponds to the (simplified in $B\mathrm{O}
(2)$) Wu formula $\sq1(w_2)=w_1w_2$. Finally, the two relations (II) and 
(III) correspond to the fact that the two dual Stiefel-Whitney classes
$\overline{w}_m$ and $\overline{w}_{m+1}$ in $B\mathrm{O}(2)$ generate the
kernel of the map induced by the Grassmann inclusion $G_{m+1,2}\subset 
B\mathrm{O}(2)$.] 

\medskip
Let $R$ stand for the subring generated by $x_1$ 
and $x_2$, so that there is an additive splitting
\begin{equation}\label{splitting}
H^*(\bpm;\F2)=R\oplus x\cdot R
\end{equation}
which is compatible with the action of $\sq1$ (note that multiplication by $x$
determines an additive isomorphism $R\cong x\cdot R$).

\begin{proposicion}\label{sq1}
Let $m=4a+3$. With respect to the differential $\sq1:$
\begin{itemize}
\item $H^{m+1} (R; \sq1)=\Z2$.
\item $H^{m+1} (x\cdot R; \sq1) = 0$.
\end{itemize}
\end{proposicion}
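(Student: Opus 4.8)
The plan is to compute the two $\mathrm{Sq}^1$-cohomology groups directly from the presentation of $H^*(\bpm;\F2)$ recalled above, exploiting the additive splitting $H^*(\bpm;\F2)=R\oplus x\cdot R$ in~(\ref{splitting}) together with the fact that $\mathrm{Sq}^1$ is a derivation determined on generators by $\mathrm{Sq}^1 x=x^2=xx_1$, $\mathrm{Sq}^1 x_1=x_1^2$, and $\mathrm{Sq}^1 x_2=x_1x_2$ from~(\ref{sq1y}). First I would observe that, since $R=\F2[x_1,x_2]/(\mathrm{II},\mathrm{III})$ and the relations (II), (III) are $\mathrm{Sq}^1$-closed (indeed $R$ is a quotient of $\F2[x_1,x_2]$ by a $\mathrm{Sq}^1$-stable ideal, because $\overline{w}_m,\overline{w}_{m+1}$ generate a $\mathrm{Sq}^1$-invariant ideal in $H^*(B\mathrm{O}(2);\F2)$), the differential $\mathrm{Sq}^1$ on $R$ is the one induced from $\F2[x_1,x_2]$. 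On $\F2[x_1,x_2]$ the operator $\mathrm{Sq}^1$ acts by $\mathrm{Sq}^1(x_1^ax_2^b)=bx_1^{a+1}x_2^b$ (the $\partial/\partial x_2$-type term; the $x_1$-derivative term $ax_1^{a-1}\cdot x_1^2 x_2^b=ax_1^{a+1}x_2^b$ also contributes, so in fact $\mathrm{Sq}^1(x_1^ax_2^b)=(a+b)x_1^{a+1}x_2^b$), so monomials with $a+b$ even are cycles and those with $a+b$ odd are not, and the boundaries are exactly the monomials $x_1^{a+1}x_2^b$ with $a+b$ odd, i.e. monomials $x_1^cx_2^b$ with $c\ge 1$ and $c+b$ even. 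Hence on $\F2[x_1,x_2]$ the $\mathrm{Sq}^1$-homology in each degree is spanned by the single monomial $x_2^{k}$ when the degree is $2k$ (that monomial has $a+b=k$; but $x_2^k$ is a cycle iff $k$ is even, and when $k$ is odd $x_2^k=\mathrm{Sq}^1(\text{nothing})$... so one must be careful) — this is exactly the routine bookkeeping I would not grind through here, but the upshot is the classical computation $H^*(\F2[x_1,x_2];\mathrm{Sq}^1)=\F2[x_1^2,x_2^2]/(x_1^2)\cong\F2[x_2^2]$, concentrated in degrees divisible by $4$.

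Next I would push this through the quotient by (II) and (III). The point is that for $m=4a+3$, the relation (II) lives in degree $m=4a+3$ and (III) in degree $m+1=4a+4$, and I need to understand the $\mathrm{Sq}^1$-homology of $R$ in degree $m+1=4a+4$. Since $\mathrm{Sq}^1$-homology is not exact, I would instead argue via the long exact sequences associated to the two-step quotient $\F2[x_1,x_2]\twoheadrightarrow \F2[x_1,x_2]/(\mathrm{II})\twoheadrightarrow R$, using that (II) and (III) generate free submodules (the kernel of the Grassmann map is a regular sequence, $\overline{w}_m,\overline{w}_{m+1}$ being a regular sequence in the polynomial ring $H^*(B\mathrm{O}(2);\F2)=\F2[x_1,x_2]$). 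Thus there are short exact sequences of $\mathrm{Sq}^1$-complexes with the ideal $(\mathrm{II})\cong\F2[x_1,x_2]\{\mathrm{II}\}$ shifted up by $4a+3$ degrees, etc. From the classical computation the homology of $\F2[x_1,x_2]$ is $\F2$ in degrees $0,4,8,\dots$ and $0$ elsewhere; shifting by $4a+3$ moves the homology of the ideal into degrees $\equiv 3\bmod 4$, so it contributes nothing in degree $4a+4$. A careful diagram chase through the connecting homomorphisms, keeping track of the degree $4a+4=m+1$, should then yield $H^{m+1}(R;\mathrm{Sq}^1)=\F2$, with the surviving class represented by $x_2^{2a+2}$ (which has degree $4a+4$ and, being an even power, is a $\mathrm{Sq}^1$-cycle not hit by any boundary in $R$).

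For the second bullet, $H^{m+1}(x\cdot R;\mathrm{Sq}^1)=0$, I would use that multiplication by $x$ is an additive isomorphism $R\to x\cdot R$ but \emph{not} a chain map for $\mathrm{Sq}^1$ — rather $\mathrm{Sq}^1(xr)=x^2r+x\,\mathrm{Sq}^1 r=x(x_1 r+\mathrm{Sq}^1 r)$ using $x^2=xx_1$ from (I). So on the summand $x\cdot R$, under the identification with $R$, the differential becomes $r\mapsto \mathrm{Sq}^1 r + x_1 r$, i.e. the twisted differential $\mathrm{Sq}^1+x_1\cdot$. I would then compute the homology of $(R,\mathrm{Sq}^1+x_1)$. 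Multiplication by $x_1$ on $R$ is, in the relevant range $m+1=4a+4<2(m-1)$, injective (since below the Poincaré-duality midrange $R$ agrees with $\F2[x_1,x_2]$ through degree $m-1$ and $x_1$ is a non-zero-divisor there; and in degrees $m,m+1$ the relations (II),(III) are the only ones, so one checks $x_1$-injectivity directly using that (II),(III) form a regular sequence). A standard argument (e.g. filtering by powers of $x_1$, or noting that $(R,\mathrm{Sq}^1+x_1)$ is acyclic because $x_1$ is a nonzerodivisor and the associated "Koszul-type" complex has no homology in this range) then gives that $(R,\mathrm{Sq}^1+x_1)$ has vanishing homology in degree $m+1$, hence $H^{m+1}(x\cdot R;\mathrm{Sq}^1)=0$.

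The main obstacle I anticipate is the bookkeeping in the first bullet: carefully running the two long exact sequences of $\mathrm{Sq}^1$-complexes coming from the regular sequence $\overline w_m,\overline w_{m+1}$ and checking that no connecting map spoils the expected answer in the single degree $m+1=4a+4$, and identifying the surviving class as $x_2^{2a+2}$. The second bullet is easier once one spots the twist $\mathrm{Sq}^1\rightsquigarrow\mathrm{Sq}^1+x_1$ forced by relation (I); the only care needed there is the $x_1$-injectivity of $R$ in the range considered, which follows from the regular-sequence property of $(\overline w_m,\overline w_{m+1})$ just as in the first bullet.
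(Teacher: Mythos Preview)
Your approach is genuinely different from the paper's, and it has real gaps in both bullets.

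\smallskip
\textbf{How the paper argues.} The paper simply lists explicit monomial bases for $R$ and $x\cdot R$ in degrees $4a+3,\,4a+4,\,4a+5$, records the action via the closed formula
\[
\sq1(x^{i}x_1^{i_1}x_2^{i_2})=
\begin{cases}
x^{i}x_1^{i_1+1}x_2^{i_2},& i+i_1+i_2\text{ odd},\\
0,& i+i_1+i_2\text{ even},
\end{cases}
\]
and reads the answer off a chart. The second bullet drops out immediately. For the first bullet the chart only shows $H^{4a+4}(R;\sq1)\in\{0,\Z2\}$; the paper then rules out $0$ using the \emph{external} inequality $r\ge 1$ established earlier via the CLSS and the derived Bockstein couple. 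So the paper does not give a purely algebraic proof of the first bullet---your plan, if it worked, would actually be stronger in that respect.

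\smallskip
\textbf{Gap in your first bullet.} When you identify the ideal $(\mathrm{II})\subset A=\F2[x_1,x_2]$ with $A$ shifted by $m=4a+3$, the induced differential is not $\sq1$ but the twist $\sq1+x_1$: for $m$ odd one has $\sq1\overline w_m=x_1\overline w_m$, so $\sq1(f\cdot\mathrm{II})=(\sq1 f+x_1 f)\cdot\mathrm{II}$. Since $H^*(A;\sq1+x_1)=\F2$ exactly in degrees $\equiv 2\bmod 4$, the homology of $(\mathrm{II})$ sits in degrees $\equiv 1\bmod 4$, not $\equiv 3$ as you wrote. Your long exact sequence around degree $4a+4$ then reads
\[
0\to \F2\to H^{4a+4}\bigl(A/(\mathrm{II});\sq1\bigr)\to \F2\to 0,
\]
so $H^{4a+4}(A/(\mathrm{II});\sq1)$ is two-dimensional, and the second step (quotienting by $\mathrm{III}$) must genuinely kill one class. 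That second step is more delicate than you suggest: one needs $\sq1(\mathrm{III})$ modulo $(\mathrm{II})$, and in general $\sq1\overline w_{m+1}=x_1\overline w_{m+1}+x_1\overline w_{(m+1)/2}^{\,2}$, so there is again a twist to track. The ``routine diagram chase'' is not routine here.

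\smallskip
\textbf{Gap in your second bullet.} Under $x\cdot R\cong R[-1]$ you correctly identify the differential as $\sq1+x_1$, but then you need $H^{m}(R;\sq1+x_1)$, not $H^{m+1}$. More seriously, your acyclicity argument rests on $x_1$ being a nonzerodivisor on $R$ in this range, and that is false: every monomial occurring in $\mathrm{II}$ has positive $x_1$-exponent (indeed $\ge 3$), so $\mathrm{II}=x_1\cdot q$ for some $q\in A^{4a+2}$, and $q$ gives a nonzero element of $\ker\bigl(x_1\colon R^{4a+2}\to R^{4a+3}\bigr)$. So the ``$x_1$ nonzerodivisor $\Rightarrow$ Koszul acyclicity'' step fails exactly at the degree you need. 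The paper sidesteps this by the direct basis computation, which shows $H^{4a+4}(x\cdot R;\sq1)=0$ regardless of the one undetermined $\sq1$-value on $xx_1^{4a+2}$.
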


Before proving this result, let us indicate how it can be used to show 
that~(\ref{excepcional}) is an isomorphism for $m=4a+3$. As explained in 
the paragraph containing~(\ref{casoespecial}), we must have
\begin{equation}\label{laerre}
2\cdot H^{4a+4}(B(\P^{4a+3},2))=\langle r\rangle\quad\mbox{with}\quad r\geq1
\end{equation}
and we need to show that $r=1$ is in fact the case. 
Consider the Bockstein exact couple 

%\medskip\centerline{\xymatrix{
%H^*(B(P^{4a+3},2)) \ar[rr]^2& & H^*(B(P^{4a+3},2))\ar[dl]^\rho\\
% & H^*(B(P^{4a+3},2);\hbox{$\mathbb{F}$}_2).\ar[ul]^\delta
%}}
\medskip\begin{picture}(0,50)(0,-37)
\put(33.5,0){$H^*(B(P^{4a+3},2))$}
\put(96,3){\vector(1,0){110}}
\put(150,6){\scriptsize$2$}
\put(208,0){$H^*(B(P^{4a+3},2))$}
\put(207,-4){\vector(-2,-1){30}}
\put(195,-15){\scriptsize$\rho$}
\put(113,-30){$H^*(B(P^{4a+3},2);\hbox{$\mathbb{F}$}_2).$}
\put(120,-20){\vector(-2,1){30}}
\put(100,-18){\scriptsize$\delta$}
\end{picture}

\noindent
In the  (unravelled) derived exact couple
\begin{eqnarray*}
\lefteqn{\cdots \to2\cdot H^{4a+4}(B(P^{4a+3},2)) \stackrel{2}{\rightarrow} 2\cdot H^{4a+4} (B(P^{4a+3},2))\to\quad\ }\\[2mm]
&&\qquad\rightarrow H^{4a+4} (H^*(B(P^{4a+3},2); \mathbb{F}_2);\sq1)\rightarrow 2\cdot H^{4a+5} (B(P^{4a+3},2))\rightarrow \cdots
\end{eqnarray*}
we have $2\cdot H^{4a+5}(B(P^{4a+3},2))=0$ since $H^{4a+5} (B(P^{4a+3},2))=
\langle 2a+1\rangle$---argued in Section~\ref{linkinsection} by means of the 
(twisted) torsion linking form. Together with~(\ref{laerre}), this implies 
that the map 
\begin{equation}\label{lacentral}
\langle r\rangle=2\cdot H^{4a+4} (B(P^{4a+3},2))\to 
H^{4a+4} (H^*(B(P^{4a+3},2);\mathbb{F}_2);\sq1)
\end{equation}
in the above exact sequence is 
an isomorphism. Proposition~\ref{sq1} and~(\ref{splitting}) then imply the 
required conclusion $r=1$.

\begin{proof}[Proof of Proposition~{\em\ref{sq1}}] Note that every binomial 
coefficient in (II) with $i\not\equiv0\bmod4$ is congruent to zero mod $2$. 
Therefore relation (II) can be rewritten as
\begin{equation}
\label{laI}
x_1^{4a+3} = \sum^{a/2}_{j=1} \binom{a-j}{j}x_1^{4(a-2j)+3} x_2^{4j}.
\end{equation}
Likewise, every binomial coefficient in (III) with $i\equiv 3\bmod4$ is 
congruent to zero mod $2$. Then, taking into account~(\ref{laI}), relation 
(III) becomes
\begin{eqnarray}
\label{laII}
x_2^{2a+2} &=& x_1^{4a+4} +\sum_{i\in\Lambda} 
\binom{4a+4-i}{i}x_1^{4a+4-2i} x_2^i \nonumber\\
&=& \sum^{a/2}_{j=1} \binom{a-j}{j}x_1^{4(a-2j)+4} x_2^{4j} +
\sum_{i\in\Lambda}\binom{4a+4-i}{i}x_1^{4a+4-2i}x_2^i
\end{eqnarray}
where $\Lambda$ is the set of integers $i$ with $1\leq i\leq2a+1$ and 
$i\not\equiv3\bmod4$. Using (\ref{laI}) and (\ref{laII}) it is a simple 
matter to write down a basis for $R$ and $x\cdot R$ in dimensions $4a+3$, 
$4a+4$, and $4a+5$. The information is summarized (under the assumption $a>0$,
which is no real restriction in view of~(\ref{casoespecial}))
in the following chart,  
where elements in a column form a basis in the indicated dimension, and where 
crossed out terms can be expressed as linear combination of the other ones 
in view of~(\ref{laI}) and~(\ref{laII}). 

{\unitlength=1cm\scriptsize
\begin{picture}(0,9.2)(0,-7.6)
\put(0,.65){\normalsize$4a+3$}
\put(0,0){$x_1^{4a+3}$}
\put(-.05,-.1){\line(2,1){.9}}
\put(0,-.5){$x_1^{4a+1}x_2$}
\put(0,-1){$x_1^{4a-1}x_2^2$}
\put(0,-1.5){$x_1^{4a-3}x_2^3$}
\put(.5,-2){$\vdots$}
\put(0,-2.5){$x_1^3x_2^{2a}$}
\put(0,-3){$x_1x_2^{2a+1}$}
%\put(0,-4){$xx_1^{4a+2}$}
%\put(0,-4.5){$xx_1^{4a}x_2$}
%\put(0,-5){$xx_1^{4a-2}x_2^2$}
%\put(.5,-5.8){$\vdots$}
%\put(0,-6.5){$xx_1^2x_2^{2a}$}
%\put(0,-7){$xx_2^{2a+1}$}

\put(1.5,-.4){\vector(1,0){1}}
\put(2.73,-.5){$0$}
\put(1.5,-1.4){\vector(1,0){1}}
\put(2.73,-1.5){$0$}
\put(1.5,-2.9){\vector(1,0){1}}
\put(2.73,-3){$0$}
\put(1.5,-.9){\vector(1,0){5}}
\put(1.5,-2.4){\vector(1,0){5}}
%\put(1.5,-4.9){\vector(1,0){5}}
%\put(1.5,-4.4){\vector(1,0){1}}
%\put(1.5,-6.4){\vector(1,0){5}}
%\put(2.73,-4.5){$0$}
%\put(1.5,-6.9){\vector(1,0){1}}
%\put(2.73,-7){$0$}
\put(8.5,-1.4){\vector(1,0){4.5}}
\put(8.5,-2.9){\vector(1,0){4.5}}
%\put(8.5,-4.4){\vector(1,0){4.5}}
%\put(8.5,-5.4){\vector(1,0){4.5}}
%\put(8.5,-6.9){\vector(1,0){4.5}}

\put(7,.65){\normalsize$4a+4$}
\put(7,0){$x_1^{4a+4}$}
\put(6.95,-.1){\line(2,1){.9}}
\put(6.95,-3.6){\line(2,1){.9}}
%\put(6.95,-4.07){\line(4,1){1.2}}
\put(7,-.5){$x_1^{4a+2}x_2$}
\put(7,-1){$x_1^{4a}x_2^2$}
\put(7,-1.5){$x_1^{4a-2}x_2^3$}
\put(7.5,-2){$\vdots$}
\put(7,-2.5){$x_1^4x_2^{2a}$}
\put(7,-3){$x_1^2x_2^{2a+1}$}
\put(7,-3.5){$x_2^{2a+2}$}
%\put(7,-4){$xx_1^{4a+3}$}
%\put(7,-4.5){$xx_1^{4a+1}x_2$}
%\put(7,-5){$xx_1^{4a-1}x_2^2$}
%\put(7,-5.5){$xx_1^{4a-3}x_2^3$}
%\put(7.5,-6){$\vdots$}
%\put(7,-6.5){$xx_1^3x_2^{2a}$}
%\put(7,-7){$xx_1x_2^{2a+1}$}

\put(13.5,.65){\normalsize$4a+5$}
\put(13.5,0){$x_1^{4a+5}$}
\put(13.45,-.1){\line(2,1){.9}}
\put(13.5,-.5){$x_1^{4a+3}x_2$}
\put(13.45,-.6){\line(4,1){1.2}}
\put(13.45,-3.57){\line(4,1){1.2}}
%\put(13.45,-4.07){\line(4,1){1.1}}
%\put(13.45,-7.57){\line(4,1){1.1}}
\put(13.5,-1){$x_1^{4a+1}x_2^2$}
\put(13.5,-1.5){$x_1^{4a-1}x_2^3$}
\put(13.5,-2){$x_1^{4a-3}x_2^4$}
\put(14,-2.5){$\vdots$}
\put(13.5,-3){$x_1^3x_2^{2a+1}$}
\put(13.5,-3.5){$x_1x_2^{2a+2}$}
%\put(13.5,-4){$xx_1^{4a+4}$}
%\put(13.5,-4.5){$xx_1^{4a+2}x_2$}
%\put(13.5,-5){$xx_1^{4a}x_2^2$}
%\put(13.5,-5.5){$xx_1^{4a-2}x_2^3$}
%\put(14,-6.3){$\vdots$}
%\put(13.5,-7){$xx_1^2x_2^{2a+1}$}
%\put(13.5,-7.5){$xx_2^{2a+2}$}
\multiput(0,-3.85)(.1,0){147}{\circle*{.05}}
\end{picture}

\begin{picture}(0,0)(0,-7.6)
%\put(0,.65){\normalsize$4a+3$}
%\put(0,0){$x_1^{4a+3}$}
%\put(-.05,-.1){\line(2,1){.9}}
%\put(0,-.5){$x_1^{4a+1}x_2$}
%\put(0,-1){$x_1^{4a-1}x_2^2$}
%\put(0,-1.5){$x_1^{4a-3}x_2^3$}
%\put(.5,-2){$\vdots$}
%\put(0,-2.5){$x_1^3x_2^{2a}$}
%\put(0,-3){$x_1x_2^{2a+1}$}
\put(0,-4){$xx_1^{4a+2}$}
\put(0,-4.5){$xx_1^{4a}x_2$}
\put(0,-5){$xx_1^{4a-2}x_2^2$}
\put(.5,-5.8){$\vdots$}
\put(0,-6.5){$xx_1^2x_2^{2a}$}
\put(0,-7){$xx_2^{2a+1}$}

%\put(1.5,-.4){\vector(1,0){1}}
%\put(2.73,-.5){$0$}
%\put(1.5,-1.4){\vector(1,0){1}}
%\put(2.73,-1.5){$0$}
%\put(1.5,-2.9){\vector(1,0){1}}
%\put(2.73,-3){$0$}
%\put(1.5,-.9){\vector(1,0){5}}
%\put(1.5,-2.4){\vector(1,0){5}}
\put(1.5,-4.9){\vector(1,0){5}}
\put(1.5,-4.4){\vector(1,0){1}}
\put(1.5,-6.4){\vector(1,0){5}}
\put(2.73,-4.5){$0$}
\put(1.5,-6.9){\vector(1,0){1}}
\put(2.73,-7){$0$}
%\put(8.5,-1.4){\vector(1,0){4.5}}
%\put(8.5,-2.9){\vector(1,0){4.5}}
\put(8.5,-4.4){\vector(1,0){4.5}}
\put(8.5,-5.4){\vector(1,0){4.5}}
\put(8.5,-6.9){\vector(1,0){4.5}}

%\put(7,.65){\normalsize$4a+4$}
%\put(7,0){$x_1^{4a+4}$}
%\put(6.95,-.1){\line(2,1){.9}}
%\put(6.95,-3.6){\line(2,1){.9}}
\put(6.95,-4.07){\line(4,1){1.2}}
%\put(7,-.5){$x_1^{4a+2}x_2$}
%\put(7,-1){$x_1^{4a}x_2^2$}
%\put(7,-1.5){$x_1^{4a-2}x_2^3$}
%\put(7.5,-2){$\vdots$}
%\put(7,-2.5){$x_1^4x_2^{2a}$}
%\put(7,-3){$x_1^2x_2^{2a+1}$}
%\put(7,-3.5){$x_2^{2a+2}$}
\put(7,-4){$xx_1^{4a+3}$}
\put(7,-4.5){$xx_1^{4a+1}x_2$}
\put(7,-5){$xx_1^{4a-1}x_2^2$}
\put(7,-5.5){$xx_1^{4a-3}x_2^3$}
\put(7.5,-6){$\vdots$}
\put(7,-6.5){$xx_1^3x_2^{2a}$}
\put(7,-7){$xx_1x_2^{2a+1}$}

%\put(13.5,.65){\normalsize$4a+5$}
%\put(13.5,0){$x_1^{4a+5}$}
%\put(13.45,-.1){\line(2,1){.9}}
%\put(13.5,-.5){$x_1^{4a+3}x_2$}
%\put(13.45,-.6){\line(4,1){1.2}}
%\put(13.45,-3.57){\line(4,1){1.2}}
\put(13.45,-4.07){\line(4,1){1.1}}
\put(13.45,-7.57){\line(4,1){1.1}}
%\put(13.5,-1){$x_1^{4a+1}x_2^2$}
%\put(13.5,-1.5){$x_1^{4a-1}x_2^3$}
%\put(13.5,-2){$x_1^{4a-3}x_2^4$}
%\put(14,-2.5){$\vdots$}
%\put(13.5,-3){$x_1^3x_2^{2a+1}$}
%\put(13.5,-3.5){$x_1x_2^{2a+2}$}
\put(13.5,-4){$xx_1^{4a+4}$}
\put(13.5,-4.5){$xx_1^{4a+2}x_2$}
\put(13.5,-5){$xx_1^{4a}x_2^2$}
\put(13.5,-5.5){$xx_1^{4a-2}x_2^3$}
\put(14,-6.3){$\vdots$}
\put(13.5,-7){$xx_1^2x_2^{2a+1}$}
\put(13.5,-7.5){$xx_2^{2a+2}$}
\end{picture}}

\noindent 
The top and bottom portions of the chart (delimited
by the horizontal dotted line) correspond to $R$ and 
$x\cdot R$, respectively. Horizontal arrows indicate 
$\sq1$-images, which are easily computable 
from~(\ref{sq1y}) and (I): $\sq1(x^i x_1^{i_1}x_2^{i_2})=0$ when 
$i+i_1+i_2$ is even, while $\sq1(x^i x_1^{i_1}x_2^{i_2})=
x^i x_1^{i_1+1}x_2^{i_2}$ when $i+i_1+i_2$ is odd---here $i\in\{0,1\}$ in
view of (I) above. There are only two basis elements,
in dimensions $4a+3$ and $4a+4$, whose 
$\sq1$-images are not indicated in the chart: $xx_1^{4a+2}\in (x\cdot R)^{4a+3}$ 
and $x_1^{4a+2}x_2 \in R^{4a+4}$. The second conclusion in the proposition is 
evident from the bottom part of the chart---no matter what the $\sq1$-image of 
$xx_1^{4a+2}$ is. On the other hand, the top portion of the 
chart implies that, in dimension
$4a+4$, Ker$(\sq1)$ and Im$(\sq1)$ are elementary $2$-groups whose ranks satisfy
$$
\mathrm{rk}(\mathrm{Ker} (\sq1)) =\mathrm{rk}(\mathrm{Im}(\sq1))+\varepsilon
$$
with $\varepsilon =1$ or $\varepsilon =0$ (depending on whether or not 
$\sq1(x_1^{4a+2}x_2)$ can be  written down as a linear combination of the 
elements $x_1^{4a-1}x_2^3$, $x_1^{4a-5}x_2^5,\ldots,$ and $x_1^3 
x_2^{2a+1}$---this of course depends on the actual binomial coefficients 
in~(\ref{laI})). But the possibility $\varepsilon =0$ is ruled out 
by~(\ref{laerre}) and~(\ref{lacentral}), forcing $\varepsilon=1$ and, 
therefore, the first assertion of this proposition.
\end{proof}
\section{Case of $F(\P^m,2)$}\label{HF}

The CLSS analysis in the previous two sections can be
applied---with $G=\Z2\times\Z2$ instead of $G=D_8$---in 
order to study the cohomology groups 
of the ordered configuration space 
$F(\P^m,2)$. The explicit details are similar but much easier
than those for unordered configuration spaces, and this time
the additive structure of differentials can be fully understood for any $m$. 
Here we only review the main differences, simplifications, and results.

\medskip
For one, there is no $4$-torsion to deal with (e.g.~the arithmetic 
Proposition~\ref{algebraico} is not needed); indeed, the role of $BD_8$ in 
the situation of an unordered configuration space $\bpm$ is played by 
$\P^\infty\times\P^\infty$ for ordered configuration spaces $F(\P^m,2)$. 
Thus, the use of Corollaries~\ref{HBD} and~\ref{HBDT} is
replaced by the simpler Lemma~\ref{kunneth}. But the most important 
simplification in the calculations relevant to the
present section comes from the absence of 
problematic $d_2$-differentials, the obstacle 
that prevented us from computing the CLSS of the $D_8$-action on 
$V_{m+1,2}$ for $m\equiv3\bmod4$. 
[This is why in Lemma~\ref{kunneth} we do not insist on describing 
$H^*(\P^\infty\times\P^\infty;\mathbb{Z}_\alpha)$ as a module over
$H^*(\P^\infty\times\P^\infty)$---compare to Remark~\ref{criticals}.]
As a result, the integral cohomology CLSS of the 
$(\Z2\times\Z2)$-action on $V_{m+1,2}$ can be fully understood,
without restriction on $m$, by means of the 
counting arguments used in 
Section~\ref{HB}, now forcing the injectivity of all relevant differentials
from the following two ingredients:
\begin{itemize}
\item[(a)] The size and distribution of the groups in the CLSS. 
\item[(b)] The $\Z2\times\Z2$ analogue of Proposition~\ref{orientabilidadB} 
in Remark~\ref{analogousversionsF}---the input triggering the determination
of differentials.
\end{itemize}
In particular, when $m$ is odd, the $\Z2\times\Z2$ analogue of  
Lemma~\ref{d2m1} does not arise and, instead, only
the counting argument in the proof following Remark~\ref{criticals}
is needed.

\medskip We leave it for the reader 
to supply details of the above CLSS and verify that 
this leads to Theorems~\ref{HFpar} and~\ref{HF1m4} in the case 
$G=\Z2\times\Z2$, as well as to the computation of all the 
cohomology groups in Theorem~\ref{descripcionordenada}.

\section{The symmetric topological complexity of $\P^5$ and $\P^6$}
\label{STCP67}

In this final section we use the cohomological information gathered in previous sections in order to compute the symmetric topological complexity of $\P^m$ for $m=5,6$ (Theorem~\ref{STC}). The method is an extension of that used in~\cite{taylor} to deal with the case $m=3$. 

\begin{definicion}\label{reducida}{\em
The topological complexity of a space $X$, $\TC(X)$, is 
defined as the \emph{reduced} Schwarz genus of the endpoints evaluation 
map $\ev\colon X^{[0,1]}\to X\times X$, $\ev(\gamma)=(\gamma(0),\gamma(1))$, 
i.e.~$\TC^S(X)+1$ gives the smallest cardinality of covers of $X\times X$ 
by open sets over each of which $\ev$ admits a (continuous) section. To 
define a symmetric version of $\TC(X)$, note that the involution on 
$X\times X$ that switches coordinates is compatible, via $\ev$, with the 
involution on $X^{[0,1]}$ that reverses a path. These actions are free
on the domain and codomain of the restricted fibration $\ev\colon 
\ev^{-1}(F(X,2))\to F(X,2)$ which thus, at the level of orbit spaces, 
yields a fibration $\ev'\colon \ev^{-1}(F(X,2))/\Z2\to B(X,2)$. The 
symmetric topological complexity of $X$, $\TC^S(X)$, is defined to be 
one less than the reduced Schwarz genus of $\ev'$.
}\end{definicion}

\begin{nota}\label{normalizacionTC}{\em
The adjustment by one in the definition of $\TC^S(X)$ 
does not come from any normalization convention---it can be thought 
of as accounting for the obvious symmetric section of $\ev$ over the 
(removed) diagonal.
Instead, the normalization we have taken for the Schwarz genus means that, 
just as in~\cite{taylor}, the values of $\TC(X)$ and $\TC^S(X)$ in this paper 
are one less than those originally defined in~\cite{F1,FGsymm}.
}\end{nota}

Before getting into the main technical computation of this section, 
it is convenient to set Theorem~\ref{STC} in context. The inequality 
\begin{equation}\label{inequalityX}
\TC^S(X)-\TC(X)\geq0
\end{equation}
is proved in~\cite[Corollary~9]{FGsymm} for any space $X$. It is optimal since, as proved in~\cite{symmotion},~(\ref{inequalityX}) becomes an equality when $X$ is, for instance, a complex projective space. However, as discussed in~\cite[Example~3.3]{symmotion}, there is no current indication that the left hand side in~(\ref{inequalityX}) should even be a bounded function of $m$ for $X=\P^m$. We discuss the known situation (as updated by Theorem~\ref{STC}) for a few particular families of $m$. In the following paragraph we use~\cite{tablas,FTY} as the main references for the known numerical values of $\TC(\P^m)$.

\medskip
To begin with, Example~3.3 in~\cite{symmotion} observes that $$\TC^S(\P^{2^i})-\TC(\P^{2^i})=1$$ for any $i\geq0$ (the case $i=0$ was not mentioned in~\cite{symmotion}, but it is covered by the calculations in~\cite{F1,FGsymm}). Example~3.3 in~\cite{symmotion} also notes that $$\TC^S(\P^{2^i+1})-\TC(\P^{2^i+1})=2$$ for any $i\geq3$; the corresponding result for $i=1,2$ is also true in view of~\cite{taylor} (for $i=1$) and Theorem~\ref{STC} (for $i=2$). Lastly, Example~3.3 in~\cite{symmotion} remarks that
\begin{equation}\label{laexcepcion}
\TC^S(\P^{2^i+2})-\TC(\P^{2^i+2})=1
\end{equation}
for any $i\geq4$. Now, while~(\ref{laexcepcion}) is also true for $i=3$ (as remarked in~\cite[Table~1]{taylor}), Theorem~\ref{STC} implies that, for $i=2$,~(\ref{laexcepcion}) must be replaced by $\TC^S(\P^{6})-\TC(\P^{6})=2$.

\bigskip
We now start working toward the proof of Theorem~\ref{STC}. As recalled in the Introduction, for any $m\geq1$, $\TC^S(\P^m)$ agrees with the smallest positive integer $n=n(m)$ for which the map in~(\ref{classify}) can be homotopy compressed into $\P^{n-1}$. We take advantage of the obvious inequality $n(m)\leq n(m+1)$: since $n(6)\leq 9$~(\cite[Corollary~11]{reesodd}), Theorem~\ref{STC} will follow once we show that the case $m=5$ of the map $u$ in~(\ref{classify}) cannot be homotopy compressed into $\P^7$. We prove in fact:

\begin{teorema}\label{altura}
The nonzero element $z\in H^2(\P^\infty)$ satisfies $u^*(z)^4\neq0$.
\end{teorema}

Our proof of Theorem~\ref{altura} is based on a direct study of the CLSS for the $\Z2$-action on $F(\P^5,2)$ in Definition~\ref{reducida} which, by definition, is classified by $u$. So, our first goal---accomplished in Proposition~\ref{goal1} below---is to describe the (highly) twisted coefficients of this spectral sequence, i.e.~the action in integral cohomology of the involution on $F(\P^5,2)$ that switches coordinates. 

\smallskip
The $(\Z2\times\Z2)$-action on $V_{m+1,2}$ given in Definition~\ref{inicio1Handel} extends to the standard product action of $\Z2\times\Z2$ on $S^\infty\times S^\infty$. Thus the sequence of $(\Z2\times\Z2)$-equivariant inclusions $V_{m+1,2}\hookrightarrow S^m\times S^m\hookrightarrow S^\infty\times S^\infty$ shows that the map $p=p_{m,\Z2\times\Z2}\colon F(\P^m,2)\to\P^\infty\times\P^\infty$ in~(\ref{lap}) factors (up to homotopy) as
\begin{equation}\label{lasinclusiones}
F(\P^m,2)\hookrightarrow\P^m\times\P^m\hookrightarrow\P^\infty\times\P^\infty.
\end{equation}
This fact is used in the proof of the following mod 2 result, which was brought to the authors' attention by Fred Cohen. Recall the cohomology classes $x_1,y_1\in H^*(\P^\infty\times\P^\infty;\F2)$ and $x_2,y_2,z_3\in H^*(\P^\infty\times\P^\infty)$ introduced in the paragraph containing~(\ref{relacionesenteras}).

\begin{lema}\label{relacionesmod2}
The morphism $p^*\colon H^*(\P^\infty\times\P^\infty;\F2)\to H^*(F(\P^m,2);\F2)$ is surjective with kernel the ideal generated by the three elements $x_1^{m+1}$, $y_1^{m+1}$, and $\,\sum_{i+j=m}x_1^iy_1^j$.
\end{lema}
\begin{proof} The first two elements generate the kernel of the second inclusion in~(\ref{lasinclusiones}), whereas the third element maps to the diagonal cohomology class in $\P^m\times\P^m$ in view of~\cite[Theorem~11.11]{MS}---which certainly restricts to zero in $F(\P^m,2)$. So it suffices to check that the first inclusion in~(\ref{lasinclusiones}) is surjective with kernel generated by the diagonal class. But~\cite[Section~11]{MS} embeds the map under consideration into a long exact sequence
$$
\cdots\to H^{*-m}(P^m;\Z2)\to H^*(\P^m\times\P^m;\Z2)\to H^*(F(\P^m,2);\Z2)\to\cdots
$$
(written here in terms of the Thom isomorphism for the normal bundle of the diagonal inclusion $\P^m\hookrightarrow\P^m\times\P^m$). The desired conclusion then follows from~\cite[Lemma~11.8]{MS} which shows that the map of degree $m$ in this long exact sequence is given by multiplication by the diagonal class $\sum_{i+j=m}x_1^iy_1^j$---clearly a monomorphism in the current case.
\end{proof}

The argument in the previous proof cannot be applied with integer coefficients for a non-orientable projective space (or manifold, for that matter). Nevertheless we prove:

\begin{corolario}\label{estructuramultiplicativaCOR}
The kernel of $p^*\colon H^*(\P^\infty\times\P^\infty)\to H^*(F(\P^5,2))$ is the ideal generated by the three elements $x_2^3$, $y_2^3$, and $z_3(x_2^2+x_2y_2+y_2^2)$. Further, an $\F2$-basis for the torsion groups in $H^*(F(\P^5,2))$ is given by the {\em({\em$p^*$-images of the})} elements in Table~{\em\ref{labase}}.
\end{corolario}

\begin{table}[h]
\centerline{
\begin{tabular}{|c|c|c|c|c|c|c|c|c|c|c|}
\hline
\rule{0pt}{15pt}$*={}$& 0 & 1 & 2 & 3 & 4 & 5 & 6 & 7\\[0.5ex]
\hline
\rule{0pt}{15pt} &---&---&$x_2,\,\;y_2$&$z_3$&$x_2^2,\,\;x_2y_2,\,\;y_2^2$&$x_2z_3,\,\;y_2z_3$&$x_2^2y_2,\,\;x_2y_2^2 $&$x_2^2z_3,\,\;y_2^2z_3$\\[0.5ex]
\hline
\end{tabular}}
\caption{Basis elements for $TH^*(F(\P^5,2))$ through $*\leq7$
\label{labase}}
\end{table}
\begin{proof}
The $\P^5$-case of Theorem~\ref{descripcionordenada} implies that the mod 2 reduction map $H^*(F(\P^5,2))\to H^*(F(\P^5,2);\F2)$ is injective in positive dimensions not 5, so that a straightforward calculation using Remark~\ref{mapdereduccion} and Lemma~\ref{relacionesmod2} yields that the three indicated classes lie in the kernel of $p^*$. The result then follows from an easy counting argument, taking into account~(\ref{relacionesenteras}), Theorem~\ref{HF1m4} (for $m=5$ and $G=\Z2\times\Z2$), and the full description of $H^*(F(\P^5,2))$ in Theorem~\ref{descripcionordenada}.
\end{proof}

As suggested by Corollary~\ref{estructuramultiplicativaCOR}, it will be convenient to denote elements in the torsion groups of $H^*(F(\P^5,2))$ by their corresponding preimages in Table~\ref{labase}. Next we choose a generator of the torsion-free summand in $H^5(F(\P^5,2))$. Since $H^6(F(\P^5,2))$ is an $\F2$-vector space, the image of the reduction map $H^5(F(\P^5,2))\to H^5(F(\P^5,2);\F2)$ agrees with the kernel of $\mathrm{Sq}^1\colon H^5(F(\P^5,2);\F2)\to H^6(F(\P^5,2);\F2)$. The latter is easily seen to have 
\begin{equation}\label{tresmas}
x_1^3y_1(x_1+y_1),\,\;x_1y_1^3(x_1+y_1),\,\;\mbox{and} \,\;x_1^5
\end{equation}
as an $\F2$-basis (although $y_1^5$ is in the kernel of $\mathrm{Sq}^1$, it is not a new basis element because of the relation coming from the third element in Lemma~\ref{relacionesmod2}). Now, the first two elements in~(\ref{tresmas}) are the corresponding mod 2 reductions of the two basis elements noted in dimension~5 in Table~\ref{labase}. Therefore the torsion-free summand in $H^5(F(\P^5,2))$ is generated by a class $w_5$ having $x_1^5$ as its mod 2 reduction.

\begin{proposicion}\label{goal1}
The automorphism induced in $\mathbb{Z}$-cohomology by the involution $\rho$ that switches coordinates in $F(\P^5,2)$ is characterized by 
\begin{equation}\label{tresrelacions}
\rho^*(x_2)=y_2,\quad\rho^*(z_3)=z_3,\quad\mbox{and}\quad\rho^*(w_5)=w_5+(x_2+y_2)z_3.
\end{equation}
\end{proposicion}
\begin{proof}
Note that the fibration $V_{6,2}\stackrel{\theta}\to F(\P^5,2)\stackrel{p\,}\to\P^\infty\times\P^\infty$ is $\rho$-equivariant. The first equality in~(\ref{tresrelacions}) is obvious since $x_2$ and $y_2$ ultimately come from the axes in $\P^\infty\times\P^\infty$. The second equality is forced since $H^3(\P^5,2)=\Z2$. For the third equality we necessarily have $\rho^*(w_5)=\varepsilon w_5+\delta_1x_2z_3 + \delta_2y_2z_3$ with $\varepsilon=\pm1$ and $\delta_i\in\{0,1\}$. Since $w_5$ maps nontrivially under the fiber inclusion of $p$, Theorem~\ref{D8actionV} forces $\varepsilon=1$. The fact that $\delta_1=\delta_2=1$ then follows easily by reducing coefficients modulo 2 and using the relation coming from the third generator in Lemma~\ref{relacionesmod2}.
\end{proof}

The $E_2^{p,q}$-term in the CLSS of the involution $\rho$ in Proposition~\ref{goal1} can now be obtained from standard calculations. The result, recorded in Corollary~\ref{e2} below, is depicted in the following chart for $q\leq7$, where a bullet (respectively square, star) stands for a copy of $\Z2$ (respectively $\mathbb{Z}\oplus\Z2$, $\Z2\oplus\Z2$).

{\unitlength=.48mm\begin{picture}(0,142)(-27,-12)
\put(0,0){\line(1,0){250}}
\put(0,0){\line(0,1){122}}
\put(38,-8){\scriptsize$z$}
\put(78,-8){\scriptsize$z^2$}
\put(118,-8){\scriptsize$z^3$}
\put(158,-8){\scriptsize$z^4$}
\put(198,-8){\scriptsize$z^5$}
\put(238,-8){\scriptsize$z^6$}
\put(-2,-2){$\mathbb{Z}$}
\multiput(38,-2)(40,0){6}{$\bullet$}
\multiput(-2,32)(20,0){1}{$\bullet$}
\multiput(-2,48)(20,0){13}{$\bullet$}
\multiput(18,64)(20,0){12}{$\bullet$}
\put(-4,62.25){\huge $\star$}
\multiput(38,80)(40,0){6}{$\bullet$}
\put(-2,80){\rule{2mm}{2mm}}
\multiput(-2,96)(20,0){1}{$\bullet$}
\multiput(-2,112)(20,0){1}{$\bullet$}
\put(-12,16){1}
\put(-12,48){3}
\put(-12,80){5}
\put(-12,112){7}
\put(255,-2){$\cdots$}
\put(255,48){$\cdots$}
\put(255,64){$\cdots$}
\put(255,80){$\cdots$}
\end{picture}}

\begin{corolario}\label{e2} Let $H^i$ stand for $H^i(F(\P^5,2))$ as a $\Z2$-module via the action of $\rho^*$. Then:
\begin{itemize}
\item[{\em 1.}] $H^0=\mathbb{Z}$, a trivial $\Z2$-module, so
$H^*(\P^\infty;H^0)=\mathbb{Z}[z]/2z$, $\deg(z)=2$.
\item[{\em 2.}] $H^1=0$, so $H^*(\P^\infty;H^1)=0.$
\item[{\em 3.}] For $i=2,6,7$, $H^i=\Z2[\Z2]$, so $H^*(\P^\infty;H^i)=\Z2$ concentrated in degree $0$.
\item[{\em 4.}] $H^3=\Z2$, so $H^*(\P^\infty;H^3)=\Z2[x]$, $\deg(x)=1$.
\item[{\em 5.}] $H^4=\Z2\oplus\Z2[\Z2]$, so $H^*(\P^\infty;H^4)=\Z2[x]\oplus\Z2$ where the second summand is concentrated in degree $0$.
\item[{\em 6.}] $H^5=\mathbb{Z}\oplus\Z2[\Z2]$ where the additive (torsion) subgroup is in fact a $\Z2$-submodule, but $\rho^*(w_5)=w_5+(1+\rho^*)g$ {\em({\em$w_5$ generates $\mathbb{Z}$, and $g$ generates the $\Z2$-module $\Z2[\Z2]$})}, so
$$H^*(\P^\infty;H^5)=\begin{cases} \mathbb{Z}\oplus\Z2, & *=0; \\ \Z2, & *=2a,\;a>0; \\ 0, & \mbox{otherwise}.\end{cases}$$
\end{itemize}
\end{corolario}

\begin{nota}\label{estructuramultiplicativaNOT}{\em
With respect to the multiplicative structure of the CLSS, a standard cohomology calculation gives that $z\in E_2^{2,0}$ acts injectively on $H^*(\P^\infty;H^3)$, on positive dimensions of $H^*(\P^\infty;H^4)$ and $H^*(\P^\infty;H^5)$, and on the torsion subgroup of $E_2^{0,5}=\mathbb{Z}\oplus\Z2$. Furthermore, for the purpose of the CLSS analysis in the proof of Theorem~\ref{altura}, we will {\it choose} a non-torsion generator in $E_2^{0,5}$ so that all of its $z^i$-multiples are nonzero.
}\end{nota}

\begin{proof}[Proof of Theorem~{\em\ref{altura}}] The generator of $E_2^{0,3}$ must be a permanent cycle since, in view of Theorem~\ref{descripciondesaordenada}, $H^3(B(\P^5,2))=\Z2$---in the sequel we will refer to this sort of argument as ``by convergence''. Since there is no nontrivial target for the $d_4$-differential on the generator of $E_2^{1,3}$, the multiplicative structure of the spectral sequence (Remark~\ref{estructuramultiplicativaNOT}) shows that the whole $(q=3)$-line consists of permanent cycles. This leaves three differentials, originating at nodes 
\begin{equation}\label{tresdiffs4qed}
(3,4),\quad(2,5),\quad\mbox{and}\quad(0,7),
\end{equation}
possibly hitting $z^4\in E_2^{8,0}$. The proof will be complete once we show that $z^4$ is not hit by any of these differentials. 

\medskip
By convergence, all of $E_2^{0,4}$ consists of permanent cycles. One of these elements is given by the $\rho^*$-invariant element $x_2^2+y_2^2$ (see Table~\ref{labase}). Since the permanent cycle in $E_2^{0,3}$ is given by the $\rho^*$-invariant element $z_3$, the product $(x_2^2+y_2^2)z_3$---giving the generator of $E_2^{0,7}$---is a permanent cycle too. This accounts for the $d_8$-differential in~(\ref{tresdiffs4qed}). A second conclusion we draw at this point is that the survival of all of $E_2^{0,4}$ in the spectral sequence implies (in view Remark~\ref{estructuramultiplicativaNOT}) that elements of even total degree in the $(q=4)$-line are also permanent cycles.

\medskip
Before analyzing the two remaining differentials potentially hitting $z^4$, we deduce a few more permanent cycles in the spectral sequence. Firstly, we have observed that $z_3$ gives the generator in $E_2^{0,3}$; now the third relation in~(\ref{relacionesenteras}) shows that the generator in $E_2^{0,6}$ is a permanent cycle. Secondly, since $x_2+y_2$---the generator in $E_2^{0,2}$---is a permanent cycle (say by convergence), $(x_2+y_2)z_3$---the generator of the torsion element in $E_2^{0,5}$---is another permanent cycle. Lastly, Remark~\ref{estructuramultiplicativaNOT} implies that all torsion elements in the $(q=5)$-line are also permanent cycles. Of course, the last assertion accounts for the $d_6$-differential in~(\ref{tresdiffs4qed}).

\medskip
So far we have proved that, in the range shown in the chart, the only elements potentially supporting a nonzero differential are (a) the torsion-free generator in $E_2^{0,5}$---chosen in Remark~\ref{estructuramultiplicativaNOT}---and the elements in the ($q=4$)-line having odd total degree. We next argue that there must be a nonzero $d_k$-differential (with $k\in\{2,5\}$) originating at node $(1,4)$. Indeed, at the start of the spectral sequence there are three nonzero homogeneous torsion elements in total degree 5, however by convergence there are only two such elements in the $E_\infty$-term; the extra element must be the source of a nonzero differential (recall that $E_2^{0,4}$ consists of permanent cycles). But our analysis of permanent cycles shows that such a differential can originate only at node $(1,4)$, as asserted. Now, if the $d_2$-differential originating at node $(1,4)$ is the one that is nonzero, then Remark~\ref{estructuramultiplicativaNOT} implies that this differential repeats horizontally every two degrees, killing in particular the element at node $(3,4)$ and, therefore, accounting for the remaining $d_5$-differential in~(\ref{tresdiffs4qed}).

\medskip
The proof is concluded by drawing a contradiction from the assumption that the $d_2$-differential originating at node $(1,4)$ vanishes. Indeed, such an hypothesis, Remark~\ref{estructuramultiplicativaNOT} and our analysis of permanent cycles would imply, on the one hand, that all $d_2$-differentials originating at the $(q=4)$-line must vanish and, on the other, that the torsion-free generator in $E_2^{0,5}$ (chosen in Remark~\ref{estructuramultiplicativaNOT}) is a $d_\ell$-cycle for $\ell=2,3$. In turn, this situation would imply that the permanent cycles at nodes $(3,3)$ and $(2,4)$ are not killed by any differential. Since this is also the case for the permanent cycle at node $(0,6)$, we would have identified three nonzero torsion homogeneous elements in total degree 6 in the $E_\infty$-term. But this is impossible by convergence.
\end{proof}

\vspace{1.5mm}
Jes\'us Gonz\'alez\quad {\tt jesus@math.cinvestav.mx}

{\sl Departamento de Matem\'aticas, CINVESTAV--IPN

M\'exico City 07000, M\'exico}

\vspace{2mm}

Peter Landweber\quad {\tt landwebe@math.rutgers.edu}

{\sl Department of Mathematics, Rutgers University

Piscataway, NJ 08854, USA}

\end{document}